\documentclass[aos, preprint]{imsart}

\RequirePackage{amsthm,amsmath,amsfonts,amssymb,mleftright}
\RequirePackage[numbers]{natbib}
\usepackage{bbm}
\usepackage{xparse}
\usepackage{mathrsfs}
\usepackage{enumitem}

\RequirePackage[colorlinks,citecolor=blue,urlcolor=blue]{hyperref}%% uncomment this for coloring bibliography citations and linked URLs

\startlocaldefs
\theoremstyle{plain}
\newtheorem{thm}{Theorem}
\newtheorem{lemma}{Lemma}
\newtheorem{prop}{Proposition}
\newtheorem{cor}{Corollary}
\theoremstyle{remark}
\newtheorem{remark}{Remark}

\newcommand{\K}{\mathcal{K}}
\newcommand{\floor}[1]{\left \lfloor #1 \right \rfloor} %floor
\newcommand{\X}{\mathcal{X}}
\newcommand{\Q}{\mathcal{Q}}
\newcommand{\T}{\mathbb{T}}
\newcommand{\EE}{\mathbb{E}}
\newcommand{\RR}{\mathbb{R}}
\newcommand{\eps}{\varepsilon}
\newcommand{\lle}{\lesssim}
\newcommand{\SSS}{\mathcal{S}}
\newcommand{\eL}{\mathscr{L}}
\newcommand{\Nzero}{\mathbb{N}_0}
\newcommand{\del}{\partial}
\newcommand{\ip}[2]{\bigl\langle #1, #2 \bigr\rangle} %inner product
\newcommand{\Bigip}[2]{\Bigl\langle #1, #2 \Bigr\rangle} %inner product
\newcommand{\II}{\mathcal{I}}
\newcommand{\JJ}{\mathcal{J}}
\NewDocumentCommand{\evalat}{sO{\big}mm}{%
	\IfBooleanTF{#1}
	{\mleft. #3 \mright|_{#4}}
	{#3#2|_{#4}}%
}

\newcommand{\HHH}{\mathbb{H}}
\newcommand{\gaatp}{\overset{P}{\to}}

\endlocaldefs

\begin{document}

\begin{frontmatter}
%%%%%%%%%%%%%%%%%%%%%%%%%%%%%%%%%%%%%%%%%%%%%%
%%                                          %%
%% Enter the title of your article here     %%
%%                                          %%
%%%%%%%%%%%%%%%%%%%%%%%%%%%%%%%%%%%%%%%%%%%%%%
\title{Semi-parametric Bernstein--von Mises Theorem in a Parabolic PDE Problem \\ \\ \rm{ Dedicated to the memory of Harry van Zanten} }

\runtitle{Semi-parametric BvM in a Parabolic PDE Problem}
%\thankstext{T1}{A sample additional note to the title.}

\begin{aug}
	%%%%%%%%%%%%%%%%%%%%%%%%%%%%%%%%%%%%%%%%%%%%%%%
	%% Only one address is permitted per author. %%
	%% Only division, organization and e-mail is %%
	%% included in the address.                  %%
	%% Additional information can be included in %%
	%% the Acknowledgments section if necessary. %%
	%% ORCID can be inserted by command:         %%
	%% \orcid{0000-0000-0000-0000}               %%
	%%%%%%%%%%%%%%%%%%%%%%%%%%%%%%%%%%%%%%%%%%%%%%%
	\author[A]{\fnms{Adel}~\snm{Magra} \ead[label=e1]{a.magra@vu.nl}}
	\author[A]{\fnms{Frank}~\snm{van der Meulen}\ead[label=e2]{f.h.van.der.meulen@vu.nl}}
	\and
	\author[B]{\fnms{Aad}~\snm{van der Vaart}\ead[label=e3]{A.W.vanderVaart@tudelft.nl}}
	%%%%%%%%%%%%%%%%%%%%%%%%%%%%%%%%%%%%%%%%%%%%%%
	%% Addresses                                %%
	%%%%%%%%%%%%%%%%%%%%%%%%%%%%%%%%%%%%%%%%%%%%%%
	\address[A]{
		VU Amsterdam \printead[presep={ ,\ }]{e1,e2}}
	
	\address[B]{
		TU Delft \printead[presep={,\ }]{e3}}
\end{aug}

\begin{abstract}
	We consider the heat equation with absorption in a bounded domain of $\mathbb{R}^d$, where both the scalar diffusivity and the absorption function are unknown. We investigate a Bayesian approach for recovering the diffusivity from a noisy observation of the solution to the PDE over the domain. Given a Gaussian process prior on the absorption function, we derive a Bernstein-von Mises theorem for the marginal posterior distribution of the diffusivity under assumptions on the prior and on smoothness properties of the absorption. 
\end{abstract}

\begin{keyword}[class=MSC]
\kwd[Primary ]{62F15}
\kwd[; secondary ]{35R30}
\end{keyword}

\begin{keyword}
\kwd{nonlinear inverse problem}
\kwd{Bayesian}
\kwd{uncertainty quantification}
\end{keyword}

\end{frontmatter}
%%%%%%%%%%%%%%%%%%%%%%%%%%%%%%%%%%%%%%%%%%%%%%
%% Please use \tableofcontents for articles %%
%% with 50 pages and more                   %%
%%%%%%%%%%%%%%%%%%%%%%%%%%%%%%%%%%%%%%%%%%%%%%
%\tableofcontents

%%%%%%%%%%%%%%%%%%%%%%%%%%%%%%%%%%%%%%%%%%%%%%
%%%% Main text entry area:

\section{Introduction}

Let $\X$  be a bounded subset of $\mathbb{R}^d$, with boundary $\del \X$, and let $T<\infty$ be a fixed time horizon. 
For a given scalar \textit{diffusivity} $\theta>0$ and \textit{absorption function}
$f:\X \to (0,\infty)$, consider the following parabolic partial differential equation (PDE):
\begin{equation}\label{eq:PDE}
	\left\{\begin{aligned}
		\del_t u - \frac{1}{2}\theta\Delta_x u &=- fu,\quad&& \text{on }\X\times (0,T),\\
		u &= g, &&\text{on }\del\X\times (0,T),\\
		u(\cdot,0) &= u_0, &&\text{on }\X.
	\end{aligned}\right.
\end{equation}
Here $\del_t $ is the partial derivative relative to $t$ of the function $(x,t)\mapsto u(x,t)$ and $\Delta_x = \sum_{i=1}^d \del^2/\del x_i^2$ is the Laplace operator, and 
$u_0$ and $g$ are known boundary conditions. Under suitable assumptions (see the next section), 
equation  \eqref{eq:PDE} possesses a classical unique solution $u: \X\times (0,T)\to\mathbb{R}$. Denoting this solution, which depends on $(\theta,f)$, 
by $\K_\theta f = u_{\theta,f}$, we are interested in the problem of recovering the pair $(\theta,f)$ from noisy observations of $\K_\theta f$. 
We are particularly interested in uncertainty quantification for $\theta$, and
thus complement the paper \cite{kekkonen}, who considered the recovery of $f$ for a known value $\theta=1$.

Assuming $\K_\theta f$ to belong to $L^2(\X\times (0,T))$, we consider the signal-in-white-noise observational model, where the observation is 
a corrupted solution $\K_\theta f$ over the whole domain $\X\times [0,T]$. Namely, for a signal to noise ratio $n\in\mathbb{N}$, we observe
\begin{align}\label{eq: observation}
	\K_\theta f + \frac{1}{\sqrt{n}}\dot{W},
\end{align}
where $\dot{W}$ is the iso-Gaussian process on $L^2(\X\times (0,T))$, 
that is, $\dot{W}$ is a stochastic process $\{\dot{W}_h : h\in L^2(\X\times(0,T))\}$ satisfying $\dot{W}_h\sim N(0,\|h\|_{L^2(\X\times (0,T))}^2)$.
As is well known, this is asymptotically equivalent in the Le Cam sense to the regression model in which one observes
the function $\K_\theta f$ at $n$ suitably placed ``design points'' in $\X\times (0,T)$ with i.i.d.\ standard normal errors.

We take a Bayesian approach with $\theta$ and $f$ independent under the prior. The prior on $\theta$ only needs to have positive Lebesgue density, 
while we choose a Gaussian process prior for $f$. We investigate the resulting posterior distribution of $(\theta, f)$ under the assumption that in reality 
the data in \eqref{eq: observation} is generated according to a ``true'' pair of parameters $(\theta_0,f_0)$. Our main novel result is a 
Bernstein-von Mises (BvM) theorem for the marginal posterior of $\theta$.

Our setting is a statistical inverse problem, as the accessible observation concerns the transformation $\K_\theta f$ of the object of interest $(f,\theta)$ rather than the latter object itself. Nonparametric Bayes methods  for inverse problem have become popular in recent years for their convenience and practicality for computations, and their asymptotic performance has been substantially studied. Inverse problems can be categorised as linear (e.g. \ \cite{Knapiketal2011,GiordanoKekkonen, Knapiketal2013,Ray2013,Yan2020})  or nonlinear (e.g. \ \cite{Nickl2018, Nickl23, kekkonen, MonardNicklPternain2021, monard2021,giordano,nicklVdG,SzabovdV2024}) based on the linearity of the transformation involved. Both types of inverse problems present their challenges, with nonlinear problems making it particularly difficult to come up with a general theory from which to directly extract results. The important reference \cite{Nickl23} provides a general scheme to deal with nonlinear inverse problems when using Gaussian priors, but specifics must be investigated one problem at a time. 

Statistical inference on  $f$ in the problem \eqref{eq:PDE} with $\theta$ set equal to 1 is considered In \cite{kekkonen}. Various contraction results are derived. 
The focus in the present paper is on the parameter $\theta$, when both $f$ and $\theta$ are unknown, making this a semi-parametric problem. 
Semi-parametric problems are a particular instance of nonparametric problems where the goal is inference on a finite-dimensional function of the unknown parameter. Bayesian methods to deal with such problems have been studied also in the context of inverse problems, usually with a focus on the estimation of a linear functional of the unknown parameter (e.g.\ \cite{Knapiketal2011, ismaeljudith, RayvdV, Nickl2018, GiordanoKekkonen, MonardNicklPternain2021}). The particular semi-parametric structure we consider here can be described as ``strictly semiparametric'', in the sense that the finite-dimensional parameter of interest $\theta$ varies independently from the infinite-dimensional nuisance parameter $f$. In our setting one may see $\theta$ as capturing knowledge of the ``nonlinear operator'' $\K_\theta$, and $f$ as the traditional parameter of the problem of inverting $\K_{\theta}f$. Such a setup has been considered in the literature for different operators $\K_\theta$. For example in \cite{castillo} (section 2), the case of translation over symmetric functions is investigated. In \cite{magra}, general compact linear operators are considered and BvM results are derived when a noisy sample of $f$ is available in addition to \eqref{eq: observation}, with as examples convolution operators and the solution map of the heat equation. %Although here we do not have access to this extra sample, the techniques used to derive a BvM for $\theta$ will be similar, with added difficulty stemming from the nonlinearity of $\K_\theta$.
 
Our main result is a BvM theorem and is presented as Theorem~\ref{thm: main}. It states that for a sufficiently regular true function $f_0$ and a suitable Gaussian prior on $f$, the marginal posterior distribution of $\theta$ behaves asymptotically like a normal distribution centered around an efficient estimator of $\theta$, with variance the inverse of the efficient Fisher information. The semiparametric background isd
developed in Section~\ref{SectionLAN}.

The proof of Theorem~\ref{thm: main} is built on two foundations. The first is contraction of the full posterior to the true parameter pair $(\theta_0, f_0)$ at a polynomial rate in $n$ (Proposition~\ref{prop: contract2}). The second is a controlled locally asymptotically normal (LAN) expansion of the log-likelihood. The contraction is essentially obtained in big steps, following the framework laid down in \cite{Nickl23}. We begin by establishing contraction in the ``direct problem'', i.e.\ of the posterior distribution of $\K_{\theta}f$  at $\K_{\theta_0}f_0$ (Proposition~\ref{prop: contract1}). The main ingredient for this result is a Lipschitz estimate on the map $\K_\theta f$ presented in Lemma~\ref{lemma: lipschitz}. The second step is to ``invert'' this contraction by a stability estimate (Lemma~\ref{lemma: stability}). Finally, controlling the remainder in the LAN expansion is achieved thanks to sufficient regularity of $\K_\theta f$ in both $f$ and $\theta$ (Lemmas~\ref{lemma: bounds} and~\ref{lemma: refinedBounds}). These regularity estimates are themselves derived from PDE theory.

In Section~\ref{sec: prelims}, we set the stage for our statistical analysis. We present our main results in Section~\ref{sec: main}. Their proofs are given in Sections~\ref{sec:proofBvM}, \ref{sec:proofContraction} and \ref{sec:proofInfo}. The various PDE results used throughout can be found in Section~\ref{sec:PDE}.

\subsection{Notation}

For two numbers $a$ and $b$, we denote by $a\wedge b$ their minimum and by  $a\vee b$ their maximum, and
write $a\lesssim b$ if $a\le Cb$ for a universal number $C$.
For two sequences $a_n$ and $b_n$, we mean by $a_n\asymp b_n$ that $|a_n/b_n|$ is bounded away from zero and infinity
as $n\to\infty$, and by $a_n \lle b_n$ that $a_n/b_n$ is bounded above.
%%%%%%%%%%%%%%%%%%%%%%%%%%%%%
For a positive real number $x$, its floor $\floor{x}$ is the largest integer less than or equal to $x$.
%%%%%%%%%%%%%%%%%%%%%%%%%%%%%
We set $\Nzero=\mathbb{N}\cup \{0\}$.  For a multi-index $j\in\Nzero^d$, we define $|j|=\sum_{i=1}^d j_i$.
%%%%%%%%%%%%%%%%%%%%%%%%%%%%%
We write $\del A$ for the boundary of a set $A$ in a metric space and $\overline{A}=A\cup \del A$ for its closure.
For  $\eps > 0$, we denote by  $N(\eps, A, d)$ the minimum number of balls of radius $\eps$ relative to the metric $d$ needed to cover $A$. 

For a Hilbert space $X$, we denote by $X^*$ its topological dual, and for a linear operator $A:X\to Y$ between two Hilbert spaces, by $A^*$ the adjoint of $A$.

%%%%%%%%%%%%%%%%%%%%%%%%%%%%%
For an open or closed subset $\X\subset \mathbb{R}^d$, the notations $C(\X)=C^0(\X)$, $C^r(\X)$, $C^\infty(\X)$, $L^2(\X)=H^0(\X)$ and $H^r(\X)$ have their usual meaning
as the spaces of continuous functions, H\"older functions of degree $r\ge 0$, infinitely often differentiable functions, square-integrable functions and
Sobolev functions of degree $r\ge 0$. For $r<0, \ H^{-r}(\X):= (H^r(\X))^*$. We use abbreviations $\|\cdot\|_\infty$ for the supremum norm, 
and $\ip{f}{g}_2=\int_\X f(x)g(x)\,dx$ and $\|f\|_2:=|\ip{f}{f}_2|^{1/2}$ for the inner product and norm of $L^2(\X)$. The norms of the other
spaces are written with the space as subscript, e.g.\  $\|\cdot\|_{C^r(\X)}$, where the set $\X$ may be deleted if is clear from the context.
%%%%%%%%%%%%%%%%%%%%%%%%%%%%%

In connection with the PDE \eqref{eq:PDE} we also need the parabolic extensions $C^{r,s}\bigl(\X\times(0,T)\bigr)$ and $H^{r,s}\bigl(\X\times(0,T)\bigr)$ of the H\"older
and Sobolev spaces of degree $(r,s)\in [0,\infty)^2$, which roughly are functions $(x,t)\mapsto f(x,t)$ that are smooth of order $r$ in $x$ and of order $s$ in $t$. (In our case always with $s=r/2$.)
For a precise definition of $C^{r,s}\bigl(\X\times(0,T)\bigr)$, see for instance Section~2.1 in \cite{kekkonen}, and for $H^{r,s}\bigl(\X\times(0,T)\bigr)$, see 
\cite{lionsVol2}, Section~2.1 in Chapter~4. In particular, for $r,s\in \mathbb{N}_0^2$, the square of the norm of $H^{r,s}\bigl(\X\times(0,T)\bigr)$ is given by
\begin{align*}
  \|f\|^2_{H^{k,m}(\X\times(0,T))} = \int_0^T \|f(\cdot,t)\|_{H^k(\X)}^2 \,dt + \int_\X \|f(x,\cdot)\|^2_{H^m((0,T))}\,dx.
\end{align*}
Note that this does not include mixed derivatives between the space and time variables. These parabolic spaces are the classical solution
spaces of evolution equations of type \eqref{eq:PDE}. We shall not need the precise definition, but will repeatedly use the inequalities:
\begin{align}
\|fg\|_{H^{r,r/2}}&\lesssim\|f\|_{H^{r,r/2}}\|g\|_{H^{r,r/2}},\qquad r>d/2,\label{EqSobolevProductOne} \\
\|fg\|_{H^{r,r/2}}&\lesssim \|f\|_{C^{r,r/2}}\|g\|_{H^{r,r/2}},\qquad r\ge 0, \label{EqSobolevProductTwo}\\
\|f\|_{H^{(1-\nu)r,(1-\nu) r/2}}&\lesssim \|f\|_2^{\nu}\|f\|_{H^{r,r/2}}^{1-\nu},\qquad \nu\in(0,1),r\ge 0.\label{EqInterpolation}
\end{align}
(See \cite{lionsVol2}, Chapter~4, Proposition~2.1 for the third one.)
The space $H_0^{r,s}\bigl(\X\times(0,T)\bigr)$ is the closure in  $H^{r,s}\bigl(\X\times(0,T)\bigr)$ of the subset of  functions that vanish on a neighbourhood of the
``parabolic boundary'' $\del\X\cup(\X\times\{0\})$, while
$H_{B,0}^{2,1}\bigl(\X\times(0,T)\bigr)$ and $H_{C,0}^{2,1}\bigl(\X\times(0,T)\bigr)$ are the functions in $H^{2,1}\bigl(\X\times(0,T)\bigr)$ 
that vanish on  $\del\X\cup(\X\times\{0\})$ and $\del\X\cup(\X\times\{T\})$, respectively.

\section{A Statistical Inverse Problem: The Heat Equation with Absorption}\label{sec: prelims}
Assume that $\X$ is a bounded and open subset of $\mathbb{R}^d$, for some $d\in\mathbb{N}$, 
with smooth boundary $\del \X$.
For a fixed time horizon $T>0$, let $Q=\X\times (0,T)$ be the space-time cylinder.
We write $\Sigma=\del \X\times (0,T)$ for its ``lateral boundary'',  and refer to $\Sigma\cup (\overline{\X}\times\{0\})$ as its ``parabolic boundary''.

\subsection{Solution to the Classical Inverse Problem} \label{sec: classicalSol}

%%%%%%%%%%%%%%%%%%%%%%%%%%%%%

%\color{blue} %might not be needed 
%Let $0< \eta\leq 1$ and suppose that in \eqref{eq:PDE}, we have that $\theta>0,\ f\in C^\eta(\X),\ g\in C^{2+\eta,1+\eta/2}(\overline{\del \X\times (0,T)}),$ and  $u_0\in C^{2+\eta}(\overline{\X})$. If $f\geq f_{min}>0$, and if the following boundary conditions hold for $x\in\del X$:
%\begin{align}\label{eq:boundary}
	%g(x,0)=u_0(x),\;\;\; \del_tg(x,0)-\frac{\theta}{2}\Delta_x u_0(x) + f(x)u_0(x) = 0,
%\end{align}
%then it is known that a unique and positive solution $u_{\theta, f}$ to \eqref{eq:PDE} exists and belongs to $C(\overline{\Q})\cap C^{2+\eta, 1+\eta/2}(\Q)$. 

For a given pair $(\theta,f)\in(0,\infty)\times L^2(\X)$, consider the parabolic operator  
$	\eL_{\theta,f}:H^{2,1}(\Q)\to L^2(\Q)$, given by 
\begin{align}\label{eq:Ldef}
\eL_{\theta,f}u = \frac{\theta}{2}\Delta_x u -\del_t u - fu.
\end{align}
With this notation the PDE problem \eqref{eq:PDE} can be equivalently rewritten as
\begin{equation}\label{eq:PDE2}
	\left\{\begin{aligned}
		\eL_{\theta,f}u &= 0,\quad&& \text{on }\X\times (0,T),\\
		u &= g, && \text{on }\del\X\times (0,T),\\
		u(\cdot,0)& = u_0, && \text{on }\X.
	\end{aligned}\right.
\end{equation}
For existence of a solution $u$ of a given regularity to this problem, the function $f$ and the boundary functions $g$ and $u_0$ must be ``compatible''. 
Fix $\beta>d/2$, and assume that $f\in C^\infty(\X)$, $g\in H^{3/2+\beta,3/4+\beta/2}(\Sigma)$ and $u_0\in H^{1+\beta}(\X)$ are given functions
such that there exists $\psi\in H^{2+\beta,1+\beta/2}(\overline{\Q}) $ with 
\begin{equation}\label{eq:CR}
	\left\{\begin{aligned}
		\psi &= g, \quad&& \text{ on } \ \del \X \times (0,T),\\
		\psi(\cdot,0) &= u_0, && \text{ on } \ \X,\\
		\evalat{\del^k_t \left(\eL_{\theta,f} \psi \right)}{t=0} &= 0, && \text{ for }\ 0\leq k < \frac{\beta}{2}-\frac{1}{2}.
	\end{aligned}\right.
\end{equation}
(For $\beta<1$, the third condition is empty.)
Then there exists a classical unique solution $u_{\theta,f}\in H^{2+\beta,1+\beta/2}(\Q)$ to \eqref{eq:PDE2} (\cite{lionsVol2}, Theorem~5.3 in Chapter~4,
\cite{Lunardi1995}, \cite{krylov}). In the case that $\X$ is the $d$-dimensional torus $\T^d:=[0,1]^d$ (should be understood in a periodic sense; identifying the right side with the left and the top one with the bottom one), the existence of a (periodic) solution $u_{\theta,f}$ also follows. The fact that $\del \T^d =\varnothing$ means that the existence of a ``compatibility" function $\psi\in H^{3/2+\beta, 3/4+\beta/2}(\Q)$ satisfying \eqref{eq:CR} can be automatically obtained by a trace extension argument (e.g. Theorem 4.2 in Chapter 1 of \cite{lionsVol1}). A consequence of this is that the condition that $f$ is smooth can also be relaxed as per the following theorem, whose proof can be found in Section \ref{sec:PDE}.

\begin{thm}[Existence of Solution]\label{thm: existence}
	Let $\X=\T^d$. Assume $\beta>2+d/2$ and suppose that $f\in H^\beta(\X)$ with $f\geq f_{\min}>0$. Assume also that $u_0\in H^{1+\beta}(\X)$. Then the following boundary value problem
	\begin{equation}\label{eq:PDETorus}
		\left\{\begin{aligned}
			\eL_{\theta,f}u &= 0,\quad&& \text{on }\X\times (0,T],\\
			u(\cdot,0)& = u_0, && \text{on }\X,
		\end{aligned}\right.
	\end{equation}
	has a unique strong solution $u_{\theta,f}\in H^{2+\beta, 1+\beta/2}(\Q)$. Furthermore, there exists $C>0$ such that we have the following estimate:
	\begin{align}\label{eq: estimateSolution}
		\|u\|_{H^{2+\beta, 1+\beta/2}} \leq C(1+\|f\|_{H^{\beta}}^{1+\beta/2}).
	\end{align}
\end{thm}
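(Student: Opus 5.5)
We will obtain the solution by approximating $f$ with smooth functions, applying the classical smooth-coefficient theory already quoted, and passing to the limit using a priori estimates. The crux is an a priori bound in $H^{2+\beta,1+\beta/2}(\Q)$ that depends on $f$ only through $\|f\|_{H^\beta}$, with the stated polynomial dependence.

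\textbf{Existence and uniqueness at low regularity.} Since $\T^d$ has no boundary, $L=\tfrac{\theta}{2}\Delta_x$ with periodic conditions generates an analytic contraction semigroup. Because $f\ge f_{\min}>0$, the energy identity
\begin{align*}
\tfrac12\tfrac{d}{dt}\|u\|_2^2+\tfrac\theta2\|\nabla u\|_2^2+\int f u^2=0
\end{align*}
gives uniqueness, together with the bounds $\|u\|_{L^\infty(0,T;L^2)}\le \|u_0\|_2$ and $\|u\|_{L^2(0,T;H^1)}\lesssim \|u_0\|_2$. Existence of a weak solution follows from a Galerkin scheme in the Fourier basis, or from Lions' theorem. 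Since $\beta>d/2$, $H^\beta(\T^d)$ embeds in $C^{\beta-d/2}$, so $f$ is bounded.

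\textbf{Bootstrap of regularity.} Treat $-fu$ as a source term and apply maximal parabolic regularity for the periodic heat equation (\cite{lionsVol2}, Ch.~4):
\begin{align*}
\|u\|_{H^{s+2,s/2+1}}\lesssim \|fu\|_{H^{s,s/2}}+\|u_0\|_{H^{s+1}}, \qquad 0\le s\le\beta.
\end{align*}
In the torus case the compatibility condition \eqref{eq:CR} is automatic, as remarked before the theorem. The main steps are then as follows.
\begin{enumerate}[label=(\roman*)]
\item Control the product. Regard $f$ as time-independent on $\Q$, so that $\|f\|_{H^{s,s/2}(\Q)}\lesssim\|f\|_{H^s(\X)}$. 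By \eqref{EqSobolevProductOne} with $s=\beta>d/2$, or by a tame Moser estimate, we get
\begin{align*}
\|fu\|_{H^{s,s/2}}\lesssim \|f\|_{L^\infty}\|u\|_{H^{s,s/2}}+\|f\|_{H^s}\|u\|_{L^\infty}.
\end{align*}
\item Iterate in steps of size $2$, from $s=0$ up to $s=\beta$. Each step gains two derivatives and costs one factor of $\|f\|$. Interpolation \eqref{EqInterpolation} handles the intermediate orders. The $\|u\|_{L^\infty}$ term is bounded uniformly by the maximum principle: $|u|\le\|u_0\|_\infty$ because $f\ge0$.
\item Count the exponent. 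A naive count gives a power of roughly $\beta/2$ of $\|f\|_{H^\beta}$. A more careful argument uses interpolation between $\|u\|_2$ and the top norm, then Young's inequality to absorb the top-order term. This yields $\|u\|_{H^{2+\beta,1+\beta/2}}\lesssim 1+\|f\|_{H^\beta}^{1+\beta/2}$. The exact exponent should match what the interpolation inequality produces.
\end{enumerate}

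\textbf{Approximation and limit.} Let $f_k=f*\phi_k$ be mollifications. Then $f_k\ge f_{\min}$, $f_k\in C^\infty$ and $f_k\to f$ in $H^\beta$. The classical theory gives smooth solutions $u_k$, and these satisfy the bound of step (iii) uniformly in $k$. Extract a weakly convergent subsequence in $H^{2+\beta,1+\beta/2}$. The limit solves \eqref{eq:PDETorus}, because $f_ku_k\to fu$ in $L^2$ by strong convergence of $f_k$ in $L^\infty$ and of $u_k$ in $L^2$. By weak lower semicontinuity the limit inherits the estimate \eqref{eq: estimateSolution}, and uniqueness identifies it with $u_{\theta,f}$.

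\textbf{Main obstacle.} We expect the difficulty to lie in steps (i)--(iii), namely obtaining the exact power $1+\beta/2$. The first attempt will be an induction on integer $s$ with the tame product estimate, followed by interpolation for fractional $\beta$. The time-regularity half of the anisotropic norm also needs care: it has to come from the equation itself, since $\partial_t u=\tfrac\theta2\Delta u-fu$.
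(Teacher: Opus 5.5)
Your proposal is correct and is essentially the paper's proof. You approximate $f$ by smooth functions, derive $\|u\|_{H^{2+\beta,1+\beta/2}}\lesssim 1+\|f\|_{H^\beta}\|u\|_2^{2/(2+\beta)}\|u\|_{H^{2+\beta,1+\beta/2}}^{\beta/(2+\beta)}$ from maximal regularity, \eqref{EqSobolevProductOne}, \eqref{EqInterpolation} and the maximum principle, absorb to get the power $1+\beta/2$ (this is your step (iii); the iteration in (ii) is not needed), and pass to a weak limit; the only difference is in identifying the limit, where you let the equation survive weak limits in $H^{2+\beta,1+\beta/2}(\Q)$ and take uniqueness from the energy identity, whereas the paper first obtains a weak solution and then appeals to Lunardi's H\"older theory for the unique strong solution.
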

 
Our problem is to infer the pair $(\theta, f)$ from a noisy observation of $u_{\theta,f}$ and known values of the boundary functions $g$ and $u_0$.
Although the operator $\eL_{\theta,f}$ is linear in $u$ (and also in $(\theta,f)$), the map $(\theta,f)\mapsto u_{\theta,f}$ is non-linear,
and hence this problem is referred to as a non-linear inverse problem.

Assume that $f$, $g$ and $u_0$ are all lower bounded by positive constants. Then it can be shown through the Feynman-Kac representation of $u_{\theta,f}$ that $u_{\theta,f} > 0$ and that $\|u_{\theta,f}\|_\infty\leq \|u_0\|_\infty + \|g\|_\infty$ (\cite{kekkonen}, formula (6)).
The equality $\eL_{\theta,f}u_{\theta,f} = 0$ then yields  $f = \bigl((\frac{\theta}{2}\Delta_x - \del_t)u_{\theta,f}\bigr)/u_{\theta,f}$, showing that
$f$ can be recovered from $u_{\theta,f}$ for known $\theta$.

If $\theta$ is also unknown, then the map $(\theta,f)\mapsto u_{\theta,f}$ is not necessarily injective. In particular, this means that identifying $\theta$ from $u_{\theta,f}$ is not possible in general. Thankfully, the next lemma shows that it is possible to identify the pair $(\theta,f)$ in some cases. 
\begin{lemma}\label{lemma: identifiability}
	Assume that $f, g$ and $u_0$ are all lower bounded by positive constants and that $u=u_{\theta,f}$ satisfies the boundary value problem \eqref{eq:PDE2}. If $\del_t^2\log{u}\neq 0$, then for any $u'= u_{\theta',f'}$ such that $\eL_{\theta', f'} u'=0$, we have
	\[u'=u \implies (\theta,f) = (\theta', f').\]
\end{lemma}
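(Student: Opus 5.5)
The plan is to subtract the two PDEs and use that $f,f'$ depend only on $x$ while $u$ depends on $(x,t)$. Suppose $u'=u$. Since $u>0$ (by the Feynman--Kac representation, as recalled before the lemma), we can divide by $u$. From $\eL_{\theta,f}u=0$ and $\eL_{\theta',f'}u=0$ we get, pointwise on $\X\times(0,T)$,
\begin{align*}
\frac{\theta}{2}\frac{\Delta_x u}{u}-\frac{\del_t u}{u}=f,\qquad \frac{\theta'}{2}\frac{\Delta_x u}{u}-\frac{\del_t u}{u}=f'.
\end{align*}
Subtracting the two identities gives
\begin{align*}
\frac{\theta-\theta'}{2}\,\frac{\Delta_x u}{u}=f-f'.
\end{align*}
The right side is a function of $x$ only.

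Suppose, for contradiction, that $\theta\neq\theta'$. Then $\Delta_x u/u = 2(f-f')/(\theta-\theta')$ is independent of $t$. Substituting this into the first identity,
\begin{align*}
\del_t\log u=\frac{\del_t u}{u}=\frac{\theta}{2}\frac{\Delta_x u}{u}-f
\end{align*}
is also a function of $x$ alone. Differentiating in $t$ (the solution lies in $H^{2+\beta,1+\beta/2}$, so $u$ is regular enough for this to make sense classically or weakly) gives $\del_t^2\log u=0$. This contradicts the hypothesis, so $\theta=\theta'$. Then the subtracted identity gives $f=f'$, which proves $(\theta,f)=(\theta',f')$.

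The only delicate point is regularity and interpretation. The hypothesis $\del_t^2\log u\neq 0$ should be read as ``not identically zero''. We need $u$ twice differentiable in $t$, or else we can phrase the argument as ``$\del_t\log u$ does not depend on $t$'', which avoids second derivatives altogether. The identities above hold almost everywhere since $u\in H^{2,1}$, and positivity of $u$, hence of $\log u$'s domain, comes from the lower bounds on $f,g,u_0$. I expect no substantive obstacle beyond making this almost-everywhere argument precise. If needed, I would argue that a function whose $t$-derivative vanishes in the distributional sense on the cylinder is a.e. equal to a function of $x$ alone.
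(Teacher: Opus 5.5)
Your proposal is correct and matches the paper's proof: divide by $u>0$, subtract the two equations to get $2(f-f')=(\theta-\theta')\Delta_x u/u$, and use $\Delta_x u/u=\frac{2}{\theta}\bigl(f+\partial_t\log u\bigr)$ to see that $\theta\neq\theta'$ would make $\partial_t\log u$ time-independent, a contradiction. Reading $\partial_t^2\log u\neq 0$ as ``$\partial_t\log u$ is not constant in $t$'', which also sidesteps second time derivatives, is the same interpretation the paper uses.
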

\begin{proof}
	We begin with the observation that the assumption $\del^2_t\log{u}\neq 0$ means that $\del_t \log{u}$ is not constant with respect to time. We have seen in the preceding paragraph that under the assumptions of the lemma, $u$ is positive over $\Q$. Since $u=u'$, we can therefore write:
	\[ f = \frac{\frac{\theta}{2}\Delta_x u-\del_t u}{u}, \;\;\;  f' = \frac{\frac{\theta'}{2}\Delta_x u-\del_t u}{u},\]
	which implies that 
	\[2(f-f') = (\theta-\theta')\frac{\Delta_xu}{u}.\]
	Note that the left hand side of the above equation is constant with respect to time. On the other hand, if $\theta\neq\theta'$, then the right hand side is non-zero and depends non-trivially on time by our first observation since
	\[\frac{\Delta_x u}{u} = \frac{2}{\theta}\left(f + \frac{\del_tu}{u}\right) = \frac{2}{\theta}\left(f + \del_t\log{u}\right).\]
	It follows that $\theta=\theta'$ and consequently $f=f'$.
\end{proof}

\subsection{Parameter Space and Link Function}

To allow for a more streamlined exposition of our results, we will be working on the $d$-dimensional torus $\X=\T^d$ for the rest of the paper. We note that the case of a general bounded subset of $\RR^d$ with smooth boundary $\del\X$ can be dealt with by using the same strategy we employ. Most of our results can actually be adapted to hold in this general case but require to be more careful in dealing with the non-trivial boundary $\del \X$.  We assume that $\theta$ belongs to a compact set $\Theta\subset  [\theta_{\min},\theta_{\max}]\subset (0,\infty)$. For a given integer $\beta> 2+d/2$ and $f_{\min} >0$, we consider the following parameter space for $f$:
\begin{align*}
	\mathcal{F}_{\beta,f_{\min}} = \left\{ f\in H^\beta(\X): \inf_{x\in\X} f(x)>f_{\min}\right\}.
\end{align*}
%We put the same assumptions on the smoothness of $g$ and $u_0$ as presented in the previous subsection, as well as the compatibility relations \eqref{eq:CR}. We also add that $u_0\geq u_{\min}>0$ and $g\geq g_{\min} >0$.
We assume further that $u_0\geq u_{0,\min}>0$. Because $\mathcal{F}_{\beta,f_{\min}}$ is not a linear space and we wish to use a Gaussian prior, we reparametrise using a link function,
 as done in other instances in the literature (\cite{kekkonen, giordano, nicklVdG}). We fix a link function 
$\Phi:\mathbb{R}\to(f_{\min},\infty)$ with bounded derivatives of any order and strictly positive first derivative.
The map $F\mapsto \Phi\circ F$ is a bijection between $H^\beta(\X)$ and $\mathcal{F}_{\beta, f_{\min}}$ and hence
\begin{align*}
	\mathcal{F}_{\beta, f_{\min}} = \{\Phi\circ F: F\in H^\beta(\X)\}.
\end{align*} 
Instead of considering the map $(\theta,f)\mapsto \K_{\theta} f = u_{\theta,f}$ as mentioned in \eqref{eq: observation} (which is well defined by Theorem \ref{thm: existence}), we then define the forward map
as $K_\theta: H^\beta(\X)\to H^{2+\beta,1+\beta/2}(\Q)$ by
\begin{align*}
K_\theta(F) &= \K_\theta(\Phi\circ F) = u_{\theta,\Phi\circ F}.
\end{align*}
By an abuse of notation, we refer to $u_{\theta,\Phi\circ F}$ as $u_{\theta,F}$ and to $\eL_{\theta,\Phi\circ F}$ as $\eL_{\theta,F}$.
We also abbreviate $H:=H^\beta(\X)$.

\subsection{Observational Model}
For an arbitrary orthonormal basis $\{e_k\}_{k\in\mathbb{N}}$ of $L^2(\Q)$, 
let $P_{\theta,F}^n$ be the law of the sequence $X^n=(X_1^n, X_2^n,\cdots)$ defined by
\begin{align*}
	X_k^n:=\ip{K_\theta F}{e_k}_2 + n^{-1/2}Z_k,
\end{align*}
where the $Z_k$'s are i.i.d. standard normal variables. This gives observations that are statistically equivalent to
the signal-in white noise model \eqref{eq: observation} (for $f=\Phi\circ F$), in the sense that 
the likelihood ratios are equal. For $u\in L^2(\Q)$, we write $\ip{X^n}{u}_2:=\sum_k X^n_k\ip{u}{e_k}_2$, which
series can be shown to converge both in quadratic mean and almost surely.

Let $P_0^n$ be the distribution of the noise sequence $n^{-1/2}(Z_1, Z_2,\cdots)$ (or alternatively of $\dot{W}$ in \eqref{eq: observation}). The log-likelihood has the following expression (e.g. Lemma~L.4 in \cite{GhosalVdV}),
\begin{align}\label{eq: loglikelihood}
	\log\frac{dP^n_{\theta,F}}{dP^n_0}(X^n) = n\ip{X^n}{K_\theta F}_2 - \frac{n}{2}\|K_\theta F\|_2^2.
\end{align}
In combination with a prior on $(\theta,f)$, Bayes's rule yields a posterior distribution $\Pi_n(\cdot\mid X^n)$ for the pair $(\theta,f)$. We are particularly interested in the marginal posterior $B\mapsto \Pi(\theta\in B\mid X^n)$ and its asymptotic behavior as $n\to\infty$. The latter is determined by the semi-parametric structure of the model.

\section{Semi-parametric BvM for the Diffusivity Coefficient}\label{sec: main}

\subsection{Local Asymptotic Normality and Efficient Information}
\label{SectionLAN}

We follow standard semi-parametric theory (for instance as given in \cite{AadStFlour} or Chapter 25 of \cite{AadsBoek}), where we
note that by sufficiency observing $X^n$ is equivalent to observing $n$ i.i.d.\ observations with the law of $X^1$. The starting point is the local asymptotic normality (LAN) of our model. This is shown through expansions of the log-likelihood along one-dimensional submodels of the type $s\mapsto (\theta+s, F+tG)$, for $s\in\mathbb{R}$ and $G\in H$ an arbitrary fixed ``direction''. To obtain this, we first establish the existence of derivatives (or linearisations) for the maps $\theta\mapsto K_\theta F$ and $F\mapsto K_\theta F$. Here we rely on the fact that $\eL_{\theta,F}$ is an isomorphism of $H^{2,1}_{B,0}(\Q)$ onto $L^2(\Q)$ with (continuous) inverse $\eL_{\theta,F}^{-1}: L^2(\Q)\to H^{2,1}_{B,0}(\Q)$. To see this, observe that $\eL_{\theta,F} = L_\theta +\Phi(F)$, with $L_\theta:=\del_t - \frac{\theta}{2}\Delta_x$. We know that $L_\theta:H_{B,0}^{2,1}(\Q)\to L^2(\Q)$ is an isomorphism (c.f. \cite{lionsVol2}, Chapter 4, Remark 15.1). Define then 
$\Lambda:= \eL_{\theta,F}(L_\theta^{-1})$. It is clear that for $I$ the identity operator on $L^2(\Q)$, we have
\[\Lambda u = I u + \Phi(F)L_\theta^{-1}u. \]
Since $\beta>2$, we certainly have $\Phi(F)\in H^{2}(\X)$ and consequently $\Phi(F)L_\theta u \in H^{2,1}(\Q)$, and so $\Phi(F)L_\theta^{-1}:L^2(\Q)\to H^{2,1}(\Q)$ is compact. Furthermore, $\eL_{\theta, F}u = 0$ for $u\in H_{B,0}^{2,1}(\Q)$ implies that $u=0$ by Theorem \ref{thm: existence}. It follows that $\Lambda$ is injective. We conclude by the Fredholm alternative (Theorem VI.6 in \cite{brezis1983}) that it is an isomoprhism. It follows that $\eL_{\theta, F}^{-1} = L_\theta^{-1}\Lambda^{-1}$.

\begin{lemma}\label{lemma: devs}
For all $F\in H$, the map $\theta\mapsto K_\theta F$ from $\Theta$ to $H^{2,1}(\Q)$ 
is Fr\'echet differentiable at every $\theta\in\Theta$ with derivative
\[\dot{K}_\theta F=\eL_{\theta,F}^{-1}\bigl(-\tfrac12\Delta_x K_\theta F\bigr).\] 
The map $F\mapsto K_\theta F$ from $H\subset C(\X)$ to $L^2(\Q)$ is Fr\'echet differentiable  with derivative 
$I_{\theta,F}:H\mapsto L^2(\Q)$ given by 
	\[I_{\theta, F}h := \eL_{\theta,F}^{-1} \bigl(h\Phi'(F)K_{\theta}F\bigr).\]
Furthermore, for every $h\in H, \ \theta\mapsto I_{\theta,F}h$ is continuous.
\end{lemma}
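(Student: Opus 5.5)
The approach is to derive equations for the differences of solutions by subtracting the PDEs, and then to use the invertibility of $\eL_{\theta,F}$ on $H^{2,1}_{B,0}(\Q)$ established just before the lemma. Throughout, the sign convention is that $\eL_{\theta,F}u=\frac{\theta}{2}\Delta_x u-\del_t u-\Phi(F)u$.

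\emph{Derivative in $\theta$.} Write $u_\theta=K_\theta F$. Fix $\theta$ and a small $s$, and set $w_s=u_{\theta+s}-u_\theta$. Both solutions have the same initial value $u_0$, so $w_s$ vanishes at $t=0$; on the torus there is no lateral boundary. Hence $w_s\in H^{2,1}_{B,0}(\Q)$. Subtracting $\eL_{\theta+s,F}u_{\theta+s}=0$ from $\eL_{\theta,F}u_\theta=0$ gives
\begin{align*}
\eL_{\theta,F}w_s=-\tfrac{s}{2}\Delta_x u_{\theta+s}.
\end{align*}
Therefore $w_s=s\,\eL_{\theta,F}^{-1}\bigl(-\tfrac12\Delta_x u_{\theta+s}\bigr)$, and so
\begin{align*}
w_s-s\dot K_\theta F=s\,\eL_{\theta,F}^{-1}\bigl(-\tfrac12\Delta_x w_s\bigr).
\end{align*}
By continuity of $\eL_{\theta,F}^{-1}:L^2\to H^{2,1}$, the right side is bounded in $H^{2,1}$ by $C|s|\,\|w_s\|_{H^{2,1}}$. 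It therefore suffices to show $\|w_s\|_{H^{2,1}}\to0$. From the first identity, $\|w_s\|_{H^{2,1}}\lesssim |s|\,\|u_{\theta+s}\|_{H^{2,1}}$. Estimate \eqref{eq: estimateSolution} bounds $\|u_{\theta+s}\|_{H^{2+\beta,1+\beta/2}}$ uniformly over $\Theta$; for this I must check that the constant $C$ in Theorem~\ref{thm: existence} depends on $\theta$ only through $[\theta_{\min},\theta_{\max}]$. This gives Fréchet differentiability with remainder $O(s^2)$.

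\emph{Derivative in $F$.} Set $v=K_\theta(F+h)-K_\theta F\in H^{2,1}_{B,0}(\Q)$. Subtracting the equations gives
\begin{align*}
\eL_{\theta,F}v=\bigl(\Phi(F+h)-\Phi(F)\bigr)K_\theta(F+h).
\end{align*}
Taylor expansion of $\Phi$ yields $\Phi(F+h)-\Phi(F)=\Phi'(F)h+r_h$ with $\|r_h\|_\infty\le\tfrac12\|\Phi''\|_\infty\|h\|_\infty^2$. Hence
\begin{align*}
v-I_{\theta,F}h=\eL_{\theta,F}^{-1}\bigl(r_hK_\theta(F+h)+\Phi'(F)h\,v\bigr).
\end{align*}
Here $\|K_\theta(F+h)\|_\infty$ is bounded, by the Feynman--Kac bound $\|u\|_\infty\le\|u_0\|_\infty$ or by Sobolev embedding together with \eqref{eq: estimateSolution}. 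Also $\|v\|_{L^2}\lesssim\|v\|_{H^{2,1}}\lesssim\|\Phi(F+h)-\Phi(F)\|_\infty\lesssim\|h\|_\infty$. The right side is therefore $O(\|h\|_\infty^2)=O(\|h\|_H^2)$, using $H\subset C(\X)$ since $\beta>d/2$.

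\emph{Continuity of $\theta\mapsto I_{\theta,F}h$.} Write $g_\theta=h\Phi'(F)K_\theta F$. By the first part, $\theta\mapsto g_\theta$ is continuous in $L^2$. Then decompose
\begin{align*}
I_{\theta',F}h-I_{\theta,F}h=\eL_{\theta',F}^{-1}(g_{\theta'}-g_\theta)+\bigl(\eL_{\theta',F}^{-1}-\eL_{\theta,F}^{-1}\bigr)g_\theta.
\end{align*}
For the second term I use the resolvent identity
\begin{align*}
\eL_{\theta',F}^{-1}-\eL_{\theta,F}^{-1}=\eL_{\theta',F}^{-1}\bigl(\eL_{\theta,F}-\eL_{\theta',F}\bigr)\eL_{\theta,F}^{-1}=\tfrac{\theta-\theta'}{2}\,\eL_{\theta',F}^{-1}\Delta_x\eL_{\theta,F}^{-1}.
\end{align*}
Its operator norm from $L^2$ to $H^{2,1}$ is $O(|\theta-\theta'|)$, provided $\sup_{\theta'\in\Theta}\|\eL_{\theta',F}^{-1}\|<\infty$.

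\emph{Main obstacle.} The key difficulty is this uniform boundedness of $\eL_{\theta,F}^{-1}$ over $\theta\in\Theta$, which the Fredholm argument does not give directly. I would obtain it from the same resolvent identity via a Neumann series: for $|\theta-\theta'|$ small relative to $\|\eL_{\theta,F}^{-1}\|^{-1}$, the operator $\eL_{\theta',F}$ is a small perturbation of $\eL_{\theta,F}$. Hence $\theta\mapsto\eL_{\theta,F}^{-1}$ is locally bounded, and by compactness of $\Theta$ it is uniformly bounded. Alternatively, one could use a direct energy estimate with constants depending only on $\theta_{\min}$ and $\theta_{\max}$.
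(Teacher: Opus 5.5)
Your proposal is correct and is essentially the paper's proof. For both derivatives you subtract the PDEs and invert $\eL$. For continuity, your resolvent-identity split of $I_{\theta',F}h-I_{\theta,F}h$ is exactly the paper's decomposition $\eL_{\theta+s,F}(I_{\theta+s,F}h-I_{\theta,F}h)=\Phi'(F)h(K_{\theta+s}F-K_\theta F)+(\eL_{\theta,F}-\eL_{\theta+s,F})I_{\theta,F}h$.

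The uniform-in-$\theta$ bound on $\eL_{\theta,F}^{-1}$ that you single out as the main obstacle is supplied in the paper by Lemma~\ref{lemma:firstEstimate}. There it comes from an explicit $L^2$ energy estimate, together with continuity of the constant $C_\theta$ for the $H^{2,1}$ bound, so your Neumann-series local bound is a valid substitute. Also, your use of $\eL_{\theta,F}^{-1}:L^2\to H^{2,1}$ gives the $\theta$-derivative in $H^{2,1}$ as stated, whereas the paper's displayed estimate is only in $L^2$.
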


Concretely, the second assertion of the lemma means that 
$\left\| K_{\theta}(F+h) - K_\theta F - I_{\theta,F}h\right\|_2 = o(\|h\|_\infty)$, as $\|h\|_\infty\to 0$.
The LAN property of the one-dimensional submodels is an easy consequence of the lemma.

Let  $I_{\theta,F}^*$ be the adjoint of $I_{\theta,F}: L^2(\X)\to L^2(\Q)$.

\begin{lemma}[LAN expansion]\label{lemma: LAN}
Let $s\in\mathbb{R}$. For a fixed pair $(\theta,F)\in\Theta\times H$, let $\dot{K}_\theta F$ and $I_{\theta,F}$ be as in Lemma~\ref{lemma: devs}. For every $G\in H$, as $n\to\infty$, we have the following convergence in $P^n_{\theta,F}$-probability,
	\begin{align}\label{eq: LANexp}
		\log\frac{dP^n_{\theta+s/\sqrt{n}, F+sG/\sqrt{n}}}{dP^n_{\theta,F}}\to s\ip{\dot{K}_\theta F + I_{\theta,F}G}{\dot{W}}_2 - \frac{1}{2}s^2\|\dot{K}_\theta F + I_{\theta,F}G\|_2^2.
	\end{align}
Assume that there exists $\gamma_{\theta, F}\in H$ such that  $I_{\theta,F}^*\dot K_\theta F = (I_{\theta,F}^*I_{\theta,F}) \gamma_{\theta, F}$.
Then the Fisher information  $\|\dot{K}_\theta F + I_{\theta,F}G\|_2^2$  is minimised at $G = -\gamma_{\theta,F}$. 
If  $\partial^2_t \log K_{\theta}F\neq 0$, then the minimal value
$\Tilde{i}_{\theta,F} := \|\dot{K}_\theta F - I_{\theta,F}\gamma_{\theta,F}\|^2$ is strictly positive.
      \end{lemma}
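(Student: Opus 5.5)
The plan has three parts: first the LAN expansion, then the minimisation of the information over directions $G$, and finally positivity of the minimal value.

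\emph{LAN expansion.} Write $\theta_n=\theta+s/\sqrt n$ and $F_n=F+sG/\sqrt n$. From \eqref{eq: loglikelihood}, and since under $P^n_{\theta,F}$ we have $X^n=K_\theta F+n^{-1/2}\dot W$, the log-likelihood ratio equals
\begin{align*}
\sqrt n\ip{\dot W}{K_{\theta_n}F_n-K_\theta F}_2-\frac n2\|K_{\theta_n}F_n-K_\theta F\|_2^2 .
\end{align*}
It therefore suffices to show
\begin{align*}
\sqrt n\,(K_{\theta_n}F_n-K_\theta F)\to s(\dot K_\theta F+I_{\theta,F}G)\quad\text{in } L^2(\Q).
\end{align*}
Once this holds, the stochastic term converges in $L^2(P)$, because $\ip{\dot W}{h}$ is $N(0,\|h\|_2^2)$, and the quadratic term converges deterministically.

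To prove the displayed convergence, split
\begin{align*}
K_{\theta_n}F_n-K_\theta F=(K_{\theta_n}F_n-K_{\theta_n}F)+(K_{\theta_n}F-K_\theta F).
\end{align*}
The second difference is handled by the Fréchet differentiability in $\theta$ from Lemma~\ref{lemma: devs}.

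The first difference is the main obstacle, because it needs differentiability in $F$ that is uniform in $\theta$ near $\theta_0$. I would avoid this by working with the mean value form
\begin{align*}
\int_0^1 I_{\theta_n,F+\tau sG/\sqrt n}(sG/\sqrt n)\,d\tau .
\end{align*}
This reduces the problem to joint continuity of $(\theta,F)\mapsto I_{\theta,F}G$ in $L^2(\Q)$. That continuity I would obtain from continuity of $\eL_{\theta,F}^{-1}$ in operator norm, using the resolvent identity
\begin{align*}
\eL^{-1}_{\theta',F'}-\eL^{-1}_{\theta,F}=\eL^{-1}_{\theta',F'}(\eL_{\theta,F}-\eL_{\theta',F'})\eL^{-1}_{\theta,F},
\end{align*}
together with uniform boundedness of these inverses, which follows from \eqref{eq: estimateSolution}-type estimates. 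Here $\eL_{\theta,F}-\eL_{\theta',F'}$ is $\frac{\theta-\theta'}{2}\Delta_x$ plus multiplication by $\Phi(F)-\Phi(F')$. The first piece is bounded from $H^{2,1}$ to $L^2$ with norm proportional to $|\theta-\theta'|$, and the second has norm at most $\|\Phi(F)-\Phi(F')\|_\infty$. We also need $K_{\theta'}F'\to K_\theta F$, which follows from the same identity. The continuity in $\theta$ asserted in Lemma~\ref{lemma: devs} is exactly of this type.

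\emph{Minimisation.} Put $A=\dot K_\theta F$ and $B=I_{\theta,F}$. Then
\begin{align*}
\|A+BG\|^2=\|A-B\gamma\|^2+\|B(G+\gamma)\|^2+2\ip{A-B\gamma}{B(G+\gamma)}_2 .
\end{align*}
The cross term equals $\ip{B^*A-B^*B\gamma}{G+\gamma}$, which vanishes by the defining equation for $\gamma_{\theta,F}$. Hence $G=-\gamma_{\theta,F}$ minimises the information; this is the Pythagorean projection of $A$ onto the closure of the range of $B$.

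\emph{Positivity.} Suppose instead that $\dot K_\theta F=I_{\theta,F}\gamma$. Apply $\eL_{\theta,F}$ to both sides, using the formulas in Lemma~\ref{lemma: devs}. This gives
\begin{align*}
-\tfrac12\Delta_x u=\gamma\,\Phi'(F)\,u,\qquad u=K_\theta F>0 .
\end{align*}
Hence $\Delta_x u/u=-2\gamma\Phi'(F)$ is independent of time. On the other hand, as in the proof of Lemma~\ref{lemma: identifiability},
\begin{align*}
\frac{\Delta_x u}{u}=\frac2\theta\bigl(\Phi(F)+\del_t\log u\bigr),
\end{align*}
so $\del_t\log u$ would be constant in $t$. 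This contradicts $\del_t^2\log u\neq0$, and therefore $\tilde i_{\theta,F}>0$.
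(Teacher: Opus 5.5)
Your proposal is correct, and its minimisation and positivity parts are the paper's arguments. The positivity step is even slightly cleaner: applying $\eL_{\theta,F}$ to $\dot K_\theta F=I_{\theta,F}\gamma_{\theta,F}$ gives $-\tfrac12\Delta_x u=\gamma_{\theta,F}\Phi'(F)u$ with the right sign, and since $\tilde i_{\theta,F}=0$ is equivalent to that equality, no separate treatment of $\dot K_\theta F=0$ is needed.

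For the LAN expansion you use the paper's splitting but handle the $F$-increment differently. The uniformity in $\theta$ that you call the main obstacle is already at hand. The proof of the second part of Lemma~\ref{lemma: devs} gives $\|K_{\theta'}(F+h)-K_{\theta'}F-I_{\theta',F}h\|_2\lesssim\|h\|_\infty\|h\|_2$. Its constant comes only from the constant in \eqref{eq: LipsEst1}, from $U$ in \eqref{eq: uniformBound} and from bounds on $\Phi$, so it is uniform in $\theta'$. The paper applies this with $\theta'=\theta+s/\sqrt n$ and then needs only continuity of $\theta\mapsto I_{\theta,F}G$ at fixed $F$, which is the last assertion of that lemma.

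Your mean-value representation instead needs joint continuity of $(\theta',F')\mapsto I_{\theta',F'}G$ in $L^2(\Q)$, a slightly stronger, $C^1$-type statement. Your resolvent-identity argument does prove it. However, the bounds it uses are the uniform $L^2(\Q)\to L^2(\Q)$ estimate \eqref{eq: LipsEst1} for $\eL^{-1}_{\theta',F'}$ and the $L^2(\Q)\to H^{2,1}(\Q)$ estimate \eqref{eq: lipschitz2} for $\eL^{-1}_{\theta,F}$, both from Lemma~\ref{lemma:firstEstimate}. They are not \eqref{eq: estimateSolution}, which bounds the solution $u_{\theta,f}$ rather than the inverse operator.

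So your route is self-contained and yields a bit more, while the paper's is shorter because its remainder bound is quantitative and uniform. Your observation that $\ip{\dot W}{h_n-h}_2\sim N(0,\|h_n-h\|_2^2)$ also makes the convergence of the stochastic term more explicit than in the paper.
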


The number  $\|\dot{K}_\theta F + I_{\theta,F}G\|_2^2$ is the Fisher information for $s$ in the model $(P^1_{\theta+s,F+sG)}: s\in\mathbb{R})$
at $s=0$. The submodel with $G=0$ corresponds to the case where $F$ is known and yields
the ``ordinary'' Fisher information $\|\dot{K}_\theta F\|_2^2$ at $\theta$. The minimal Fisher information across all submodels $\Tilde{i}_{\theta,F}$ is referred to as the "efficient" Fisher information. Under the condition
of Lemma~\ref{lemma: LAN}  that  $I_{\theta,F}^*\dot K_\theta F$ belong to the range of $I_{\theta,F}^*I_{\theta,F}$,  this
is achieved for the submodel with $G = -\gamma_{\theta,F}$, where we can formally write
\begin{align}\label{eq: lfdDef}
  \gamma_{\theta,F}= (I_{\theta,F}^*I_{\theta,F})^{-1}I_{\theta,F}^*\dot K_\theta F.
\end{align} 
Because estimating $\theta$ is hardest in this submodel, the function $\gamma_{\theta,F}$ is
called the \textit{least favourable direction}. The function $I_{\theta,F}\gamma_{\theta,F}$ is the
orthogonal projection of $\dot{K}_\theta F$ onto the closure of $I_{\theta,F}H$ in $L^2(\Q)$,
and we can use Pythagoras rule to rewrite the efficient Fisher information as 
 \[\Tilde{i}_{\theta,F} = \|\dot{K}_\theta F\|_2^2 - \|I_{\theta,F}\gamma_{\theta,F}\|_2^2. \]
 Unless $\gamma_{\theta,F}=0$, the efficient information is strictly smaller than $\|\dot{K}_\theta F\|_2^2$,
 meaning that we lose information for not knowing $F$. If $\dot K_\theta F=0$, then both informations
 are zero, while the efficient information $\Tilde i_{\theta,F}$ also vanishes if  $I_{\theta,F}\gamma_{\theta,F} = \dot K_\theta F$.
 The last assumption of the lemma ensure that these two scenarios do not happen. We note that this is also the identifiability condition in Lemma \ref{lemma: identifiability}. If this assumption fails, then  $\partial^2_t \log K_{\theta}F= 0$ and hence $K_\theta F(x,t)= A(x) e^{B(x)t}$, for
   certain functions $A$ and $B$.
 If the boundary condition $u_0$ is such that the solution to \eqref{eq:PDETorus} takes this exponential form, then the efficient information is zero and the parameter $\theta$ is not estimable at $\sqrt n$-rate
 (\cite{vdV1991}).

In practice, we would like to have concrete assumptions on $\theta, F$ and $u_0$ that guarantee both that $I_{\theta,F}^*\dot K_\theta F$ belongs to the range of the ``information operator'' $I^*_{\theta,F}I_{\theta,F}$, and that $\theta$ is estimable at $\sqrt{n}$-rate. The next theorem deals with those issues. Before stating it, we introduce the Schrödinger operator \[\SSS_{\theta,F}:H^1(\X)\to H^{-1}(\X),\;\;\; h\mapsto \frac{\theta}{2}\Delta h - \Phi(F)h.\]
Since $\Phi(F)\in \mathcal{F}_{\beta,f_{\min}}$, we have that $\Phi(F)\geq f_{\min}>0$. Besides, $\beta>2+d/2>d/2$ implies by Sobolev embeddings that $\|\Phi(F)\|_{\infty}\leq f_{\max}<\infty$. It follows that $\SSS_{\theta,F}:H^{1}(\X)\to H^{-1}(\X)$ is a self-adjoint isomorphism (see the arguments in page 42 of \cite{nickl2025}). In particular, $\SSS_{\theta,F}:H^s(\X)\to H^{s-2}(\X)$ is injective for all $s\geq 1$, and admits an eigenvalue decomposition $\{(\lambda_{\theta,F,j}, e_{j,\theta,F})\}_{j\in\mathbb{N}_0}\subseteq \RR\times H^1(\X)$,
\[\SSS_{\theta,F} h = \sum_{k=0}^\infty \lambda_{\theta,F,j} \ip{h}{e_{\theta,F,j}}_{2}e_{\theta,F, j}.\]
\begin{thm}[Information Theorem]\label{thm:information}
	Let $(\theta,F)\in\Theta\times H$ and let $u_0\in H^{1+\beta}(\X)$. Consider also $\alpha>2+d/2$ and $\xi>\alpha+4$. The following is true.
	\begin{enumerate}[label=(\roman*),leftmargin=*, itemsep=0.5ex, before={\everymath{\displaystyle}}]
		\item If $u_0$ does not lie in an eigenspace of $\SSS_{\theta, F}$, then $\del_t^2\log K_\theta F\neq 0$.
		\item If $F\in H^\xi(\X)$ and $u_0\in H^{1+\xi}(\X)$, then $I^*_{\theta,F}\dot K_{\theta}F\in H^{\xi}(\X)$. Furthermore, the following map is an isomorphism:
		\[\II:=S_{\theta,F}^2I_{\theta,F}^*I_{\theta,F}:H^\alpha(\X)\to H^\alpha(\X).\]
	\end{enumerate}
\end{thm}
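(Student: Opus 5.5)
Throughout, write $u=K_\theta F$ and $f=\Phi(F)$. On the torus the PDE \eqref{eq:PDETorus} is the abstract evolution equation $\del_t u=\SSS_{\theta,F}u$, so we can solve it spectrally:
\[u(\cdot,t)=e^{t\SSS_{\theta,F}}u_0=\sum_j e^{\lambda_j t}\ip{u_0}{e_j}_2 e_j,\]
where $\lambda_j=\lambda_{\theta,F,j}\le -f_{\min}<0$ and $e_j=e_{\theta,F,j}$. Both parts of the theorem will be read off this representation.

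\emph{Part (i).} We argue by contraposition. Suppose $\del_t^2\log u=0$. Then $\del_t u=B(x)u$ for some function $B$ depending only on $x$, hence $u(x,t)=u_0(x)e^{B(x)t}$, which is well defined because $u>0$. Substituting into $\del_t u=\SSS_{\theta,F}u$ at $t=0$ and at general $t$ gives, for all $t$,
\[\SSS_{\theta,F}\bigl(u_0e^{Bt}\bigr)=Bu_0e^{Bt}.\]
It remains to show that $B$ is constant. Expand $e^{Bt}$ in powers of $t$ and compare coefficients. The $t^0$ term gives $\SSS u_0=Bu_0$. The $t^1$ term gives $\tfrac\theta2\bigl(2\nabla u_0\cdot\nabla B+u_0\Delta B\bigr)+B\,\SSS u_0=B^2u_0$, which by the $t^0$ identity reduces to $\mathrm{div}(u_0^2\nabla B)=0$. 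Multiplying by $B$ and integrating by parts over the torus gives $\int u_0^2|\nabla B|^2=0$. Since $u_0\ge u_{0,\min}>0$, $B$ is constant, so $\SSS u_0=Bu_0$. Thus $u_0$ lies in an eigenspace, which is the required contradiction.

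\emph{Part (ii), formula and regularity for $I^*\dot K$.} We would use the spectral form of Duhamel's formula for $\eL^{-1}$ together with the identities from Lemma~\ref{lemma: devs}. Note that $\eL_{\theta,F}^{-1}$ is applied with a minus sign convention relative to $\del_t-\SSS$; we would track this carefully. Then
\[\dot K_\theta F(t)=-\int_0^t e^{(t-s)\SSS}\tfrac12\Delta u(s)\,ds\]
up to sign, and since $\Delta$ commutes with nothing useful we instead write $\tfrac12\Delta=\theta^{-1}(\SSS+f)$. This gives $\dot K=\theta^{-1}\bigl(t\SSS e^{t\SSS}u_0\bigr)$ plus a Duhamel term with source $fu$. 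Similarly, $I h(t)=\int_0^t e^{(t-s)\SSS}\bigl(h\Phi'(F)u(s)\bigr)ds$, so
\[I^*w=\Phi'(F)\int_0^T u(s)\int_s^T e^{(t-s)\SSS}w(t)\,dt\,ds.\]
Here $\SSS$ is self-adjoint, and $I^*$ is the pointwise product of $\Phi'(F)u(s)$ with a backward parabolic solution, integrated in time. The regularity claim $I^*\dot K\in H^\xi$ follows from parabolic regularity. By Theorem~\ref{thm: existence} applied with $\xi$ in place of $\beta$, $u\in H^{2+\xi,1+\xi/2}$. The backward solution gains two derivatives over $\dot K$, and the product estimate \eqref{EqSobolevProductOne} handles the multiplication. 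The trace in time then keeps us in $H^\xi(\X)$.

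\emph{Part (ii), the isomorphism.} For $h\in H^\alpha$, the operator $I^*Ih$ is a smoothing operator of order $-4$: it is the product of $\Phi'(F)u$ with a forward solve followed by a backward solve. Each solve gains two spatial derivatives after time integration, heuristically like $\SSS^{-2}$. We would write
\[I^*Ih=\int_0^T\!\!\int_0^T \Phi'(F)u(s)\,G(s,r)\bigl[h\Phi'(F)u(r)\bigr]dr\,ds,\qquad G(s,r)=\int_{s\vee r}^T e^{(t-s)\SSS}e^{(t-r)\SSS}\,dt.\]
Integrating by parts in time twice, $\SSS^2G$ equals a multiple of the identity kernel plus boundary terms, the identity part being like $\tfrac14\delta_{s=r}$. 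This gives
\[\II h=c\int_0^T \Phi'(F)^2u(s)^2\,ds\;h+\mathcal{R}h,\]
with $\mathcal{R}$ compact on $H^\alpha$, using commutators $[\SSS,\Phi'(F)u]$ of lower order and the smoothness $\xi>\alpha+4$. The multiplier $m=c\int\Phi'(F)^2u^2\,ds$ is bounded below by positivity of $u$ and $\Phi'$. Hence $\II$ is Fredholm of index zero, and an isomorphism once it is injective. For injectivity: if $\II h=0$, then since $\SSS$ is injective, $I^*Ih=0$, so $\|Ih\|_2^2=0$. Then $\eL^{-1}(h\Phi'(F)u)=0$, so $h\Phi'(F)u=0$, and therefore $h=0$.

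The main obstacle is making the ``$\SSS^2I^*I=$ multiplier $+$ compact'' decomposition rigorous in $H^\alpha$. I would first try the case $F$ constant, where everything diagonalizes, and then treat the variable coefficients as perturbations with commutator bounds.
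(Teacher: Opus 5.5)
Your proposal is correct and follows essentially the paper's route. In (ii), the paper likewise writes $\SSS_{\theta,F}^2 I_{\theta,F}^*I_{\theta,F}$ as multiplication by $m=\int_0^T(\Phi'(F)K_\theta F)^2\,dt$ plus a compact remainder, and proves injectivity as you do, via $\langle I_{\theta,F}^*I_{\theta,F}h,h\rangle_2=\|I_{\theta,F}h\|_2^2$.

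The step you flag as the main obstacle is handled there directly, with no constant-$F$ perturbation. The paper moves $\SSS^2$ past $g=\Phi'(F)K_\theta F$ using the product rule for $\SSS$, which supplies your commutator terms. It then uses the forward and backward equations for $H_h=\eL^{-1}[hg]$ and $G_h=(\eL^*)^{-1}H_h$ to get $g\,\SSS^2G_h=g^2h+g\,\partial_t(\cdots)$, and integrates by parts once in time. This also shows your constant is $c=1$ rather than $\tfrac14$.

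In (i), your $t^1$-coefficient argument ending in $\int u_0^2|\nabla B|^2=0$ is valid. The paper gets $\nabla B=0$ (in your notation) more directly: it divides $\eL_{\theta,F}K_\theta F=0$ by $K_\theta F$ and reads off the $t^2$ coefficient $\tfrac{\theta}{2}|\nabla B|^2$ of the resulting polynomial in $t$.
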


Some comments about the results of Theorem \ref{thm:information}. Concerning item (i), certifying that the initial condition $u_0$ does not lie in an eigenspace of $\SSS_{\theta,F}$ is not possible without knowledge of $(\theta, F)$. However since those eigenspaces are finite dimensional, the set of such initial conditions is meagre in $H^{1+\beta}(\X)$; hence the condition holds for generic choices of $u_0$.

The conclusion of item (ii) allows us to derive an expression for the least favourable direction $\gamma_{\theta,F}$ in the following way. Let $\overline\gamma_{\theta,F}:=\SSS_{\theta,F}^2(I^*_{\theta,F}\dot K_\theta F)$. Clearly, for $F\in H^\xi(\X)$, we have that $\overline\gamma_{\theta,F}\in H^{\xi-4}(\X)\subset H^\alpha(\X)$. We claim that $\gamma_{\theta,F} = \II^{-1}\left[\overline\gamma_{\theta, F}\right]$. To see this, observe that this implies
\[\SSS^2_{\theta,F}(I^*_{\theta,F}I_{\theta,F}\gamma_{\theta,F}) = \SSS^2_{\theta,F} (I^*_{\theta,F}\dot K_{\theta}F)\implies I^*_{\theta,F}I_{\theta,F}\gamma_{\theta,F} = I^*_{\theta,F}\dot K_{\theta}F, \]
by injectivity of $\SSS_{\theta,F}$. This matches the definition of $\gamma_{\theta,F}$ in \eqref{eq: lfdDef}, and we can guarantee that $\gamma_{\theta,F}\in H^\alpha(\X)$ for $\alpha$ smaller, but arbitrarily close to $\xi-4$. 

\subsection{Bernstein-von Mises Theorem}

Now assume that the data $X^n$ were generated from $P_{\theta_0,F_0}^n$ for
a given pair of \textit{true} parameters $(\theta_0,F_0)$. We say that the semi-parametric \textit{Bernstein-von Mises theorem holds at $(\theta_0, F_0)$} in $P^n_{\theta_0, F_0}$-probability, if, as $n\to\infty$,
\begin{align}\label{eq: bvm}
	 \Big\|\Pi(\theta \in \cdot \mid X^n) - 
	N\Big(\theta_0 + \frac{1}{\sqrt{n}} \Delta_{ \theta_0, F_0}^n, 
	\frac{1}{n}\tilde i^{-1}_{\theta_0, F_0}\Big)\Big\|_{\text{TV}}\to 0,
	\end{align}
where  $\Delta_{ \theta_0, F_0 }^n$ are measurable
transformations of $X^n$ such that $\Delta_{ \theta_0, F_0 }^n\to N(0, \tilde i^{-1}_{\theta_0, F_0})$ in
distribution, and $\|\cdot\|_{TV}$ is the total variation distance between probability measures. 
Our main result, Theorem~\ref{thm: main} below, shows that this is indeed the case under certain assumptions on $(\theta_0,F_0)$ and on the prior. 

Theorem~\ref{thm: main} is essentially an application of Theorem 12.9 in \cite{GhosalVdV}, which in turn is a rewrite of the result obtained in \cite{castillo}. The proof has two core aspects. The first is to control the remainder in the LAN expansion \eqref{eq: LANexp} uniformly over sieves $\Theta_n\times H_n$ of posterior mass tending to 1. We take these equal to  shrinking balls around $(\theta_0,F_0)$, leverage the contraction result of Proposition~\ref{prop: contract2}, and use various estimates from PDE theory to locally control the remainder. The second aspect is to verify that the prior on $F$ is ``insensitive'' to scaled shifts in the least favourable direction. Using a Gaussian prior, we obtain this easily under the assumption that  the least favourable direction is in the reproducing kernel Hilbert space of the prior on $F$.

We now lay down the assumptions on $(\theta_0,F_0)$ and on the prior. We assume that $\theta_0$ is an interior point of $\Theta$. We further consider parameters $\xi,\alpha$ such that $\beta+d/2<\alpha < \xi-4$, and assume that $F_0\in H^\xi(\X)$, and that $u_0\in H^{1+\xi}(\X)$ and is not in an eigenspace of $\SSS_{\theta,F}$. 

We choose any prior $\pi_\theta$ on $\Theta$ with a continuous strictly positive density relative to Lebesgue measure, and
endow $F$ with a re-scaled Gaussian process prior supported on $H$. The re-scaling biases the base prior towards functions with small $H^\beta(\X)$-norm, which helps to obtain contraction of the posterior measure, as was noted in  \cite{monard2021} and
has become standard in the literature on nonlinear Bayesian inverse problems (see also \cite{Nickl23}). 
We first select a base prior $\pi'_F$ that is supported on $H$ and has reproducing kernel Hilbert space (RKHS) $\HHH$ 
contained in $H^\alpha(\X)$. The prior $\pi_F=\pi_{n,F}$ of $F$  is then the distribution of 
\begin{align}\label{eq: priorDef}
	F = n^{-d/(4\alpha + 2d + 8)}F',\qquad \ F'\sim \pi_F'.
\end{align}
We can now state our main result. For technical ease, we will only consider the case $d\le 3$.

\begin{thm}[Semiparametric BvM]\label{thm: main} 
For $d\le 3$, let $\beta+d/2<\alpha < \xi-4$ and consider the prior $\Pi=\pi_\theta\times\pi_{n,F}$ with $\pi_{n,F}$ given by \eqref{eq: priorDef}. 
Suppose that $\theta_0$ is an interior point of $\Theta$ and that $F_0\in \HHH \cap H^\xi(\X)$. Suppose also that $u_0\in H^{1+\xi}(\X)$ does not lie in an eigenspace of $\SSS_{\theta,F}$, and that $\gamma_{\theta_0,F_0}\in \HHH$. If $\beta> 2+d$, then the Bernstein-von Mises theorem \eqref{eq: bvm} holds at $(\theta_0,F_0)$.
\end{thm}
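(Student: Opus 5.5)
We would derive Theorem~\ref{thm: main} from the general semiparametric BvM theorem for strict semiparametric models (Theorem~12.9 in \cite{GhosalVdV}, after \cite{castillo}). That result requires three ingredients. First, posterior contraction: the posterior concentrates on neighbourhoods $\Theta_n\times H_n$ of $(\theta_0,F_0)$ with probability tending to one. Second, uniform control of the LAN remainder along the least favourable submodel over these neighbourhoods. Third, approximate invariance of the nuisance prior under shifts $F\mapsto F+(\theta-\theta_0)\gamma_{\theta_0,F_0}$ for $\theta-\theta_0=O(n^{-1/2})$.

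The first step is contraction. We would take $H_n=\{F:\|F-F_0\|_{2}\le M\delta_n,\ \|F\|_{H^\beta}\le M\}$ and $\Theta_n=\{|\theta-\theta_0|\le M\delta_n\}$ for a polynomial rate $\delta_n\to0$, and invoke Proposition~\ref{prop: contract2}. That proposition rests on Proposition~\ref{prop: contract1} (contraction of $K_\theta F$ in $L^2(\Q)$ at rate $n^{-(\alpha+2)/(2\alpha+d+4)}$, from the Lipschitz Lemma~\ref{lemma: lipschitz} and the rescaled prior \eqref{eq: priorDef}) and on the stability estimate Lemma~\ref{lemma: stability}. By interpolation \eqref{eq: interpolation-like} with the $H^\beta$ bound, we then get contraction in $\|\cdot\|_\infty$ (indeed in $H^{\beta'}$ for some $\beta'>d/2$), so that all PDE constants stay uniform on $\Theta_n\times H_n$.

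The second step is the LAN remainder. Write $\ell_n(\theta,F)$ for the log-likelihood \eqref{eq: loglikelihood}. Along the path $F_t(\theta)=F-(\theta-\theta_0)\gamma$ with $\gamma=\gamma_{\theta_0,F_0}$, we need
\[\sup_{\Theta_n\times H_n}\bigl|\ell_n(\theta,F_t(\theta))-\ell_n(\theta_0,F)-\sqrt n(\theta-\theta_0)\tilde\Delta_n+\tfrac n2(\theta-\theta_0)^2\tilde i_{\theta_0,F_0}\bigr|\to0,\]
where $\tilde\Delta_n=\ip{\dot K_{\theta_0}F_0-I_{\theta_0,F_0}\gamma}{\dot W}_2$. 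We would expand
\[K_\theta F_t-K_{\theta_0}F=(\theta-\theta_0)\bigl(\dot K_{\theta}F - I_{\theta,F}\gamma\bigr)+R\]
using Lemma~\ref{lemma: devs}, with second-order remainder $\|R\|_2\lesssim|\theta-\theta_0|^2$ coming from Lemmas~\ref{lemma: bounds} and \ref{lemma: refinedBounds}. Two pieces then need control:
\begin{itemize}
\item[(a)] the deterministic drift $n(\theta-\theta_0)\ip{\dot K_\theta F-I_{\theta,F}\gamma}{K_{\theta_0}F-K_{\theta_0}F_0}_2$;
\item[(b)] the stochastic term $\sqrt n(\theta-\theta_0)\ip{(\dot K_\theta F-I_{\theta,F}\gamma)-(\dot K_{\theta_0}F_0-I_{\theta_0,F_0}\gamma)}{\dot W}_2$.
\end{itemize}
For (b) we would use a chaining or Dudley bound over the shrinking class, with entropy controlled by the $H^{2,1}$ regularity of $\dot K_\theta F$ and $I_{\theta,F}\gamma$ (this needs $\gamma\in H^\alpha$, $\alpha>\beta+d/2$). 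For (a), we would linearise $K_{\theta_0}F-K_{\theta_0}F_0=I_{\theta_0,F_0}(F-F_0)+O(\|F-F_0\|^2)$ and use the orthogonality $I^*_{\theta_0,F_0}(\dot K_{\theta_0}F_0-I_{\theta_0,F_0}\gamma)=0$, which is the defining equation \eqref{eq: lfdDef}. This kills the first-order term, leaving
\[\sqrt n\bigl(\|F-F_0\|^2+\delta_n\|F-F_0\|\bigr)\]
in appropriate norms. We need this to be $o(1)$, which demands $\delta_n=o(n^{-1/4})$ in a suitable (possibly negative-Sobolev or $L^2$) norm.

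This step is the main obstacle. With the available rates it may only work after measuring the quadratic term in a weak norm: bound $\|K_{\theta_0}F-K_{\theta_0}F_0-I(F-F_0)\|_2\lesssim\|F-F_0\|_{\infty}\|F-F_0\|_2$, and then trade the sup-norm via interpolation against the $H^\beta$ bound. We expect the assumptions $\beta>2+d$ and $d\le3$ to be exactly what makes the exponent bookkeeping give $o(n^{-1/2})$. We would try this first and check the exponents.

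The third step is the prior shift. The posterior of $\theta$ is obtained by changing variables $F\mapsto F-(\theta-\theta_0)\gamma$ in the $F$-integral. Since $\gamma\in\HHH$, the rescaled prior has RKHS $n^{-d/(4\alpha+2d+8)}\HHH$, with norm $\|\gamma\|_{\HHH}n^{d/(4\alpha+2d+8)}$. By Cameron--Martin, the log density ratio for a shift of size $s/\sqrt n$ is
\[\frac{s}{\sqrt n}\,n^{2d/(4\alpha+2d+8)}\,O_P(1)+\frac{s^2}{n}\,n^{2d/(4\alpha+2d+8)}\,O(1).\]
Both terms vanish, because $2d/(4\alpha+2d+8)<1/2$; this inequality is the condition to verify here. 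The shifted sets $H_n-(\theta-\theta_0)\gamma$ remain in slightly enlarged sieves. Finally, the positivity of $\tilde i_{\theta_0,F_0}$ follows from Lemma~\ref{lemma: LAN} together with Theorem~\ref{thm:information}(i). Theorem~\ref{thm:information}(ii), with $\xi>\alpha+4$, guarantees $\gamma\in H^\alpha$. With these ingredients, Theorem~12.9 of \cite{GhosalVdV} yields \eqref{eq: bvm} with $\Delta^n_{\theta_0,F_0}=\tilde i^{-1}_{\theta_0,F_0}\tilde\Delta_n$.
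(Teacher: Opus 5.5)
Your architecture is the paper's: Theorem~12.9 of \cite{GhosalVdV}, contraction from Proposition~\ref{prop: contract2}, removing the first-order drift through the equation that defines $\gamma$, interpolation for the sup-norm, and Cameron--Martin for the shift. However, the chaining step as you set it up fails for $d\in\{2,3\}$. Suppose all you use is that the indexing classes $\{\dot K_{\theta_0}F-\dot K_{\theta_0}F_0\}$ and $\{I_{\theta_0,F}\gamma-I_{\theta_0,F_0}\gamma\}$ lie in a ball of $H^{2,1}(\Q)$. Then $\log N(\eps)\lesssim\eps^{-(d/2+1)}$ in $L^2(\Q)$, and the Dudley integral $\int_0\eps^{-(d+2)/4}\,d\eps$ diverges unless $d=1$. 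In fact, for $d\ge2$ the supremum of $\langle \dot W,u\rangle_2$ over the unit ball of $H^{2,1}(\Q)$ is a.s.\ infinite, so no argument using only this information can work. What you need is a uniform bound in $H^{2+\eta,1+\eta/2}(\Q)$ with $\eta>(d-2)/2$. Lemma~\ref{lemma: refinedBounds} supplies this for $\eta\in(1/2,1)$, making the entropy exponent $(d+2)/(2+\eta)<2$. This, not the exponent bookkeeping, is where $d\le 3$ enters. The same issue affects a term you did not list: the stochastic part of the remainder. It satisfies $\sqrt n\,|\langle R,\dot W\rangle_2|/(1+n(\theta-\theta_0)^2)\le \sup|\langle R/(\theta-\theta_0),\dot W\rangle_2|$, a Gaussian supremum over a class of $L^2$-radius $O(\eps_n)$. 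The pointwise bound $\|R\|_2\lesssim|\theta-\theta_0|^2$ does not control it; it again needs chaining with items (ii)--(iii) of Lemma~\ref{lemma: refinedBounds}.

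The sup-norm bookkeeping you leave open does close, but only with the inverse rate. Proposition~\ref{prop: contract2} gives $|\theta-\theta_0|+\|F-F_0\|_2\lesssim\eps_n:=\delta_n^{\beta/(\beta+2)}$, not $\delta_n$, so your sieves should have radius $\eps_n$. Also, the heuristic $\delta_n=o(n^{-1/4})$ is not the operative condition, because $\|F-F_0\|_\infty$ converges more slowly. By \eqref{EqInterpolation} and Sobolev embedding, $\sup_{H_n}\|F-F_0\|_\infty\lesssim\eps_n^{1-\beta'/\beta}$ for $\beta'\in(d/2,\beta)$, so you need $\sqrt n\,\eps_n^{2-\beta'/\beta}\to0$. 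Taking $\beta-\beta'=2+d/2$, which is admissible exactly because $\beta>2+d$, this reduces to $\alpha>\beta$. Finally, the prior-shift condition must hold uniformly over the sieve after division by $1+n(\theta-\theta_0)^2$, not only for $\theta-\theta_0=s/\sqrt n$. Moreover, the Cameron--Martin variable $U(F)$ is not $O_P(1)$ under the posterior. The paper restricts to $\{|U(F)|\le\sqrt{2Cn}\,\delta_n\}$ and shows by the remaining-mass principle that this set has posterior mass tending to one. It then needs $\sqrt n\,\delta_n^2\to0$, which is equivalent to your inequality $2d/(4\alpha+2d+8)<1/2$.
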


The BvM in Theorem~\ref{thm: main} can be made more concrete by specifying an explicit base prior $\pi'_F$. To this end, let
$(e_k)_{k\ge 0}$ denote the $L^2(\X)$-orthonormal basis of eigenfunctions of the periodic Laplacian $\Delta$. We have 
\[\Delta e_j = -\lambda_j e_k, \qquad j\ge 0,\]
with $e_0=1, \lambda_0=0$  and $0<\lambda_k\leq \lambda_{k+1}\asymp k^{2/d}$ as $k\to\infty$. An equivalent sequence space norm on $H^\alpha(\X)$ is given by
\[\|h\|_{h^\alpha}^2 := \sum_{k\ge 0} (1+\lambda_k)^\alpha|\ip{h}{e_k}_{2}|^2.\]
Restricting the $e_k$'s to their real parts, we define the base prior $\pi'_F$ as the law of the Gaussian random series
\[F'(x)=\sum_{k\ge 0}(1+\lambda_k)^{-\alpha/2} Z_k e_k(x),
\;\;\ Z_k \sim^{iid} N(0,1).\]
It is well known that if $\beta<\alpha-d/2$, the sample paths of $F'$ belong to $H^\beta(\X)$ almost surely, and that the reproducing kernel Hilbert space $\HHH$ of $\pi'_F$ coincides with $H^\alpha(\X)$. The prior $\pi_{n,F}$ is then defined via the rescaling \eqref{eq: priorDef}.

\begin{cor}\label{corrolary:main}
	Let $\xi-4>\alpha>\beta+d/2$, with $\beta > 2+d$. Assume that $\Pi=\pi_\theta\times\pi_{n,F}$ is such that $\pi_\theta$ has a strictly positive density with respect to the Lebesgue measure and that $\pi_{n,F}$ is obtained from the Gaussian series prior $\pi'_F$ described above. Let $\theta_0$ be an interior point of $\Theta$. If $F_0\in H^\xi(\X)$, and if $u_0\in H^{1+\xi}(\X)$ does not lie in an eigenspace of $\SSS_{\theta_0,F_0}$, then the Bernstein-von Mises theorem holds at $(\theta_0,F_0)$.
\end{cor}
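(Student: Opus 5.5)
The corollary will follow from Theorem~\ref{thm: main} once its hypotheses are checked for the concrete Gaussian series prior. The proof is therefore a verification that (a) $\pi'_F$ is supported on $H=H^\beta(\X)$ with RKHS $\HHH=H^\alpha(\X)$, (b) $F_0\in\HHH\cap H^\xi(\X)$, and (c) $\gamma_{\theta_0,F_0}\in\HHH$. The remaining assumptions are identical to those of Theorem~\ref{thm: main}: $d\le 3$, $\beta>2+d$, $\beta+d/2<\alpha<\xi-4$, $\theta_0$ interior, $u_0\in H^{1+\xi}(\X)$ outside the eigenspaces of $\SSS_{\theta_0,F_0}$, and a positive continuous density for $\pi_\theta$.

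For (a), I would use the standard facts recalled just before the corollary. First, $\mathbb{E}\|F'\|_{h^\beta}^2=\sum_k(1+\lambda_k)^{\beta-\alpha}$ is finite because $\lambda_k\asymp k^{2/d}$ and $\alpha-\beta>d/2$. Hence $F'\in H^\beta(\X)$ almost surely, so $\pi'_F$ is a Borel Gaussian measure on the separable Hilbert space $H$. Second, the RKHS of the series $\sum_k\sigma_kZ_ke_k$ is $\{\sum_k h_ke_k:\sum_k h_k^2/\sigma_k^2<\infty\}$ with the matching norm. With $\sigma_k=(1+\lambda_k)^{-\alpha/2}$, this is exactly $h^\alpha$, which equals $H^\alpha(\X)$ with equivalent norms. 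Restricting to real parts does not change this conclusion, since the real trigonometric system spans the same Sobolev scale.

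For (b), $\xi>\alpha$ gives $H^\xi(\X)\subset H^\alpha(\X)=\HHH$, so $F_0\in\HHH\cap H^\xi(\X)$ holds automatically.

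For (c), I would invoke Theorem~\ref{thm:information}(ii) together with the discussion following it. Since $F_0\in H^\xi(\X)$ and $u_0\in H^{1+\xi}(\X)$, we get $I^*_{\theta_0,F_0}\dot K_{\theta_0}F_0\in H^\xi(\X)$. It follows that $\overline\gamma_{\theta_0,F_0}=\SSS_{\theta_0,F_0}^2 I^*_{\theta_0,F_0}\dot K_{\theta_0}F_0$ lies in $H^{\xi-4}(\X)\subset H^\alpha(\X)$. Because $\alpha>2+d/2$ holds (indeed $\alpha>\beta+d/2$ with $\beta>2$), $\II$ is an isomorphism of $H^\alpha(\X)$. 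Therefore $\gamma_{\theta_0,F_0}=\II^{-1}\overline\gamma_{\theta_0,F_0}\in H^\alpha(\X)=\HHH$, and it solves $I^*I\gamma=I^*\dot K$ by the injectivity of $\SSS_{\theta_0,F_0}$.

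The only point needing care is that $\SSS^2_{\theta_0,F_0}$ maps $H^\xi$ into $H^{\xi-4}$. This requires $\Phi(F_0)$ to be smooth enough that multiplication by it preserves $H^{\xi-2}$. That holds because $F_0\in H^\xi$ with $\xi>d/2$ and $\Phi$ has bounded derivatives of all orders, so $\Phi\circ F_0\in H^\xi$, and $H^\xi$ is a multiplication algebra.

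With (a), (b) and (c) verified, all hypotheses of Theorem~\ref{thm: main} hold, and the Bernstein--von Mises theorem \eqref{eq: bvm} follows at $(\theta_0,F_0)$.
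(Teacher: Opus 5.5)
Your proposal is correct and takes the same route as the paper. The paper's proof also notes that $F_0\in H^\xi(\X)$ gives $\gamma_{\theta_0,F_0}\in H^\alpha(\X)$ via the discussion after Theorem~\ref{thm:information}, and that $\HHH=H^\alpha(\X)$ for the series prior, so both $F_0$ and $\gamma_{\theta_0,F_0}$ lie in $\HHH$; it then invokes Theorem~\ref{thm: main}. Your extra checks (almost sure membership in $H^\beta(\X)$ from $\alpha-\beta>d/2$, the RKHS identification, and $\SSS^2_{\theta_0,F_0}$ mapping $H^\xi$ into $H^{\xi-4}$) only make explicit what the paper treats as standard.
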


\begin{proof}
	Note that the assumption $F_0\in H^\xi(\X)$ implies, in view of the discussion following Theorem~\ref{thm:information}, that $\gamma_{\theta_0,F_0}\in H^\alpha(\X)$. Since the reproducing kernel Hilbert space $\HHH$ of the base prior $\pi'_F$ coincides with $H^\alpha(\X)$ in this case, both $F_0$ and $\gamma_{\theta_0,F_0}$ lie in $\HHH$. The result follows by applying Theorem~\ref{thm: main}.
\end{proof}

%\subsection{Example with Whittle-Mat\'ern Prior}
%\label{sec: example}
%
%The BvM in Theorem \ref{thm: main} can be made more concrete by specifying an explicit base prior $\pi'_f$. To this end, consider a Whittle-Mat\'ern process $M = \{M(x): x\in \X\}$ of regularity $\alpha-d/2$ (see Example~11.8 in \cite{GhosalVdV}). The RKHS of $M$ is known to be $H^\alpha(\X)$ and for $\beta<\alpha-d/2$, the sample paths of $M$ belong to $H^\beta(\X)$ almost surely. The following specialisation Theorem~\ref{thm: main} holds.
%
%\begin{thm}
%	Let $\xi-4>\alpha>\beta+d/2$, with $\beta>2+d$. Assume that $\Pi=\pi_\theta\times\pi_f$ is such that $\pi_\theta$ has a positive density with respect to the Lebesgue measure and that $\pi_f$ is a re-scaled version of the Whittle-Matérn prior $\pi'_f=M$ described above.  Let $\theta_0$ be an interior point of $\Theta$. If $F_0\in H^\xi(\X)$ and $u_0\in H^{1+\xi}(\X)$ does not lie in an eigenspace of $\SSS_{\theta_0, F_0}$, then the BvM holds at $(\theta_0,F_0)$.Assume also that  
%\end{thm} 
%
%\begin{proof}
%	
%	Note that the assumption that $F\in H^\xi(\X)$ directly implies (in view of the discussion after Theorem \ref{thm:information}) that $\gamma_{\theta,F}\in H^\alpha(\X)$. Since the RKHS $\HHH$ of $\pi'_f$ is equal to $H^\alpha(\X)$ in this case, the conclusions of Theorem \ref{thm: main} hold.
%\end{proof}

\subsection{Contraction Results}\label{sec:contraction}

A key component of the proof of Theorem~\ref{thm: main} is the contraction of the posterior measure at the true parameter $(\theta_0,F_0)$. 
This is obtained in two steps: contraction in the direct problem, followed by contraction in the inverse problem. 

\begin{prop}[Contraction in the Direct Problem]\label{prop: contract1}
Consider the prior $\Pi_n=\pi_\theta\times \pi_{n,F}$ with $\pi_{n,F}$ given by \eqref{eq: priorDef}. Assume that $F_0\in \HHH \cap H$ and that $\theta_0$ is an interior point of $\Theta$. 
Let $\delta_n = n^{-(2+\alpha)/(2\alpha+4+d)}$ and for sufficiently large $M>0$ consider
	\begin{align}
		H_n = \{F\in H: F = F_1+F_2, \|F_1\|_{(H^2(\X))^*} \leq M\delta_n, \|F_2\|_{\HHH}\leq M, \|F\|_{H^\beta(\X)}\leq M \},
	\end{align}
Then for every sufficiently large constant $m$, the following convergence is true in $P^n_{\theta_0,F_0}$-probability:
	\begin{align}
		\Pi_n\bigl((\theta,F)\in \Theta\times H_n : \|K_\theta F- K_{\theta_0} F_0\|_2 < m\,\delta_n \mid X^n \bigr) &\gaatp 1.
	\end{align}
\end{prop}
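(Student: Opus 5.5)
We will apply the general contraction theorem for Gaussian white noise models with rescaled Gaussian priors in the form of Theorem~1.3.2 of \cite{Nickl23} (equivalently the testing-based Theorem~8.9 of \cite{GhosalVdV}). Since the white noise model has Hellinger-type tests for the $L^2(\Q)$-distance between regression functions, it suffices to verify three conditions for the product prior $\Pi_n=\pi_\theta\times\pi_{n,F}$. First, a small ball condition: $\Pi_n\bigl(\|K_\theta F-K_{\theta_0}F_0\|_2\le\delta_n\bigr)\ge e^{-cn\delta_n^2}$. Second, a remaining mass condition: $\Pi_n(\Theta\times H_n^c)\le e^{-(c+4)n\delta_n^2}$. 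Third, a metric entropy bound $\log N(\delta_n,\{K_\theta F:\theta\in\Theta,F\in H_n\},\|\cdot\|_2)\lesssim n\delta_n^2$. All three rest on the Lipschitz estimate of Lemma~\ref{lemma: lipschitz}, which I expect to take the form
\[
\|K_\theta F-K_{\theta'}F'\|_2\lesssim C(\|F\|_{H^\beta}\vee\|F'\|_{H^\beta})\bigl(|\theta-\theta'|+\|F-F'\|_{(H^2(\X))^*}\bigr).
\]
This bound comes from writing $K_\theta F-K_{\theta'}F'=\eL_{\theta,F}^{-1}[\ldots]$ and using duality via the adjoint parabolic problem on $H^{2,1}_{C,0}$ to gain two derivatives.

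\textbf{Small ball.} By independence, the prior mass factorises. The $\theta$-factor has mass $\gtrsim\delta_n$ on $\{|\theta-\theta_0|\le\delta_n\}$ because $\theta_0$ is interior and the density is positive, and this is $e^{-o(n\delta_n^2)}$. For $F$, I restrict to $\{\|F-F_0\|_{(H^2)^*}\le\delta_n,\ \|F\|_{H^\beta}\le M\}$, so that the Lipschitz constant stays bounded. Since $F_0\in\HHH$, I will use Corollary~2.6.18 of \cite{GineNickl} (or the Cameron--Martin shift lemma) on the rescaled prior, whose RKHS norm is $n^{d/(4\alpha+2d+8)}\|\cdot\|_\HHH$. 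The shift costs $e^{-\frac12 n^{2d/(4\alpha+2d+8)}\|F_0\|_\HHH^2}$, and $n^{d/(2\alpha+d+4)}=n\delta_n^2$ by the choice of $\delta_n$. The centred small ball probability is controlled through the entropy of the unit ball of $H^\alpha$ in $(H^2)^*$, which is $\eps^{-d/(\alpha+2)}$. This gives $-\log\pi'_F(\|F'\|_{(H^2)^*}\le\eps)\lesssim\eps^{-2d/(2\alpha+4-d)}$. Evaluated at $\eps=\delta_n n^{d/(4\alpha+2d+8)}$, this is of order $n\delta_n^2$, which is exactly the calibration of the rescaling. The extra constraint $\|F\|_{H^\beta}\le M$ has probability tending to one, because the rescaled prior concentrates near $0$ in $H^\beta$ (Fernique / Borell--TIS), so it can be intersected in via Anderson/Gaussian correlation, or handled by using a slightly larger $M$.

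\textbf{Sieve and entropy.} The set $H_n$ is designed for Borell's isoperimetric inequality. The rescaled prior gives $\Pi(F\notin \{F_1+F_2\})\le e^{-Cn\delta_n^2}$ with $M$ large, using the small ball estimate above. The event $\|F\|_{H^\beta}>M$ has probability $\le e^{-cM^2 n^{2d/(4\alpha+2d+8)}}=e^{-cM^2n\delta_n^2}$ by Fernique, since $\pi'_F$ is supported in $H^\beta$ and the rescaling shrinks it. For entropy, the Lipschitz bound with $\|F\|_{H^\beta}\le M$ reduces the problem to covering $\Theta$ (about $\log(1/\delta_n)$) and $H_n$ in $(H^2)^*$. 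Since the $F_1$ part is already within $M\delta_n$, it suffices to cover the $\HHH$-ball of radius $M$ in $(H^2)^*$, whose entropy is $\lesssim\delta_n^{-d/(\alpha+2)}\asymp n\delta_n^2$ because $\HHH\subset H^\alpha$.

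\textbf{Main obstacle.} The crux is the Lipschitz estimate in the weak $(H^2)^*$ norm with a constant depending only polynomially on $\|F\|_{H^\beta}$, uniformly in $\theta\in\Theta$. Its proof requires the PDE bounds of Theorem~\ref{thm: existence} (in particular \eqref{eq: estimateSolution}) for $u_{\theta,F}$, together with the $H^{2,1}$ regularity of the adjoint solution operator. If it is available as Lemma~\ref{lemma: lipschitz}, the rest is the standard bookkeeping above.
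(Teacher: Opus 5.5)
Your proposal is correct and is essentially the paper's argument. You verify the prior-mass, remaining-mass and entropy conditions of the general contraction theorem via Lemma~\ref{lemma: lipschitz}, and you spell out the rescaled-Gaussian-prior steps (Cameron--Martin shift for $F_0\in\HHH$, the small-ball bound from the entropy of $H^\alpha$-balls in $(H^2(\X))^*$, Borell's inequality and Fernique for the sieve), which the paper simply imports from the proof of Theorem~2.2.2 in \cite{Nickl23}. Incidentally, your $\log(1/\delta_n)$ entropy term for $\Theta$ is the right one: the paper's display writes $4\,\mathrm{diam}(\Theta)C(M)/(\overline{m}\delta_n)$ without the logarithm, and as written that is not $o(n\delta_n^2)$, since $2+\alpha>d$.
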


\begin{prop}[Contraction]\label{prop: contract2}
Let $\Pi_n, F_0, \theta_0, \delta_n$ and the sets $H_n$ be as in Proposition~\ref{prop: contract1}. Assume that $u_0\in H^{1+\beta}(\X)$ does not lie in an eigenspace of $\SSS_{\theta,F}$.
Then for every sufficiently large constant $\ell$, the following convergence holds in $P^n_{\theta_0,F_0}$-probability:
	\begin{align}
		\Pi_n\bigl((\theta,F)\in \Theta\times H_n : |\theta-\theta_0|+\| F- F_0\|_2 < \ell\, \delta_n^{\beta/(2+\beta)} \mid X^n \bigr) &\gaatp 1.
	\end{align}
\end{prop}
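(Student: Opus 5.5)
The plan is to deduce Proposition~\ref{prop: contract2} from Proposition~\ref{prop: contract1} by a deterministic stability estimate. We will show that for every $(\theta,F)\in\Theta\times H_n$ (so that $\|F\|_{H^\beta}\le M$),
\begin{align*}
|\theta-\theta_0|+\|F-F_0\|_2\lesssim \|K_\theta F-K_{\theta_0}F_0\|_2^{\beta/(2+\beta)},
\end{align*}
with a constant depending only on $M$, $\Theta$, $F_0$ and $u_0$. This is the role of Lemma~\ref{lemma: stability}. On the event of Proposition~\ref{prop: contract1} the right side is at most $(m\delta_n)^{\beta/(2+\beta)}$, so the posterior mass of the stated set tends to one once $\ell$ is large.

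The stability estimate rests on the identity from the proof of Lemma~\ref{lemma: identifiability}. Write $u=K_\theta F$, $u^0=K_{\theta_0}F_0$ and $f=\Phi(F)$. Since $\eL_{\theta,F}u=0$ and $\eL_{\theta_0,F_0}u^0=0$, we get
\begin{align*}
\Phi(F)-\Phi(F_0)=\frac{(\frac{\theta}{2}\Delta_x-\del_t)(u-u^0)}{u}+\frac{\theta-\theta_0}{2}\frac{\Delta_x u^0}{u}+\frac{(\frac{\theta_0}{2}\Delta_x-\del_t)u^0}{u}-\frac{(\frac{\theta_0}{2}\Delta_x-\del_t)u^0}{u^0}.
\end{align*}
The last two terms combine to $-\Phi(F_0)(u-u^0)/u$. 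We will use the lower bound $u\ge c>0$, which holds uniformly over $\|F\|_{H^\beta}\le M$ by Feynman--Kac, since $\Phi$ is bounded on bounded sets and $u_0\ge u_{0,\min}$. Write $w:=\Delta_x u^0/u^0$. Multiplying the identity by $u/u^0$ gives
\begin{align*}
\tfrac{\theta-\theta_0}{2}\,w-(\Phi(F)-\Phi(F_0))\tfrac{u}{u^0}=R,
\qquad \|R\|_{L^2(Q)}\lesssim \|u-u^0\|_{H^{2,1}}+\|u-u^0\|_2 .
\end{align*}

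To separate $\theta$ from $F$ we use that $\Phi(F)-\Phi(F_0)$ does not depend on $t$, while $w=\frac{2}{\theta_0}(\Phi(F_0)+\del_t\log u^0)$ is non-constant in time. This uses $\del_t^2\log u^0\ne0$, which follows from Theorem~\ref{thm:information}(i). We apply the time-derivative-free projection: project onto the orthogonal complement in $L^2(Q)$ of functions of $x$ alone, i.e.\ subtract time averages. The term $(\Phi(F)-\Phi(F_0))u/u^0$ is not constant in time, so this step is not immediate. To handle it, we would first expand $u/u^0=1+(u-u^0)/u^0$; the correction is of the order of $\|u-u^0\|_\infty$. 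Writing $P^\perp$ for the projection onto zero-time-mean functions, we get $|\theta-\theta_0|\,\|P^\perp w\|_2\lesssim \|R\|_2+\|u-u^0\|_\infty$. Here $\|P^\perp w\|_2>0$ is a fixed constant. Substituting back then bounds $\|\Phi(F)-\Phi(F_0)\|_2$, and $\Phi'\ge c>0$ on bounded sets gives the bound on $\|F-F_0\|_2$.

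The remaining step, which I expect to be the main obstacle, is to control $\|u-u^0\|_{H^{2,1}}$ and $\|u-u^0\|_\infty$ by $\|u-u^0\|_2$. We will use the interpolation inequality \eqref{EqInterpolation} with $r=2+\beta$ and $(1-\nu)(2+\beta)=2$, i.e.\ $\nu=\beta/(2+\beta)$. Together with the uniform bound \eqref{eq: estimateSolution} from Theorem~\ref{thm: existence}, $\|u\|_{H^{2+\beta,1+\beta/2}}\lesssim 1+M^{1+\beta/2}$, this yields $\|u-u^0\|_{H^{2,1}}\lesssim \|u-u^0\|_2^{\beta/(2+\beta)}$, which is exactly the exponent claimed. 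The sup-norm term is handled the same way, via an embedding with an exponent at least as good since $\beta>2+d/2$. For small distances the $\|u-u^0\|_2$ term is dominated by its power. The delicate points are the uniformity of the constants and the cross term just mentioned. If the expansion of $u/u^0$ proves troublesome, the alternative is to differentiate the identity once in $t$ and run the same interpolation argument at one extra order of regularity.
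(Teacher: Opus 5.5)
Your argument is correct in substance but reaches the key bound $|\theta-\theta_0|+\|F-F_0\|_2\lesssim \|K_\theta F-K_{\theta_0}F_0\|_{H^{2,1}}$ by a genuinely different route. The interpolation step with $\nu=\beta/(2+\beta)$, the uniform $H^{2+\beta,1+\beta/2}$ bound from Theorem~\ref{thm: existence}, and the deduction from Proposition~\ref{prop: contract1} are the same as in the paper.

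The paper proves the $H^{2,1}$ bound only near $(\theta_0,F_0)$. It adds the known-$\theta$ stability estimate of \cite{kekkonen}, combined with Lipschitz dependence on $\theta$, to $N$ copies of a linearised inequality in which $|\theta-\theta_0|$ is controlled by $\tilde i_{\theta_0,F_0}^{1/2}$. It then globalises with the compactness/injectivity argument of Lemma~\ref{LemmaInversion}.

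You instead make Lemma~\ref{lemma: identifiability} quantitative. Removing time averages kills the $t$-independent factor $\Phi(F)-\Phi(F_0)$ up to a small term, and isolates $(\theta-\theta_0)P^\perp w$ with $\|P^\perp w\|_2=\frac{2}{\theta_0}\|P^\perp \del_t\log u^0\|_2>0$. This gives the bound on the whole $H^\beta$-ball at once, with explicit constants. It needs no linearisation estimates and no compactness step. It also uses only $\del_t^2\log K_{\theta_0}F_0\neq 0$ rather than $\tilde i_{\theta_0,F_0}>0$. That matters here: Lemma~\ref{lemma: LAN} proves positivity of $\tilde i_{\theta_0,F_0}$ only when a least favourable direction exists, which the paper secures via Theorem~\ref{thm:information}(ii) under more smoothness of $F_0,u_0$ than Proposition~\ref{prop: contract2} assumes. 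The paper's route, in exchange, ties the stability constant to the efficient information and reuses the bounds of Lemma~\ref{lemma: bounds} that the BvM proof needs anyway.

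One justification is wrong as stated. You bound the cross term by $\|u-u^0\|_\infty$ and claim an embedding gives an exponent at least $\beta/(2+\beta)$. For $d\ge 2$ the embedding does not give this, whatever $\beta$ is. The inclusion $H^{s,s/2}(\Q)\subset L^\infty(\Q)$ requires $s>d/2+1\ge 2$, so interpolating between $L^2$ and $H^{2+\beta,1+\beta/2}$ yields only the exponent $1-s/(2+\beta)<\beta/(2+\beta)$.

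The sup norm of $u-u^0$ is not needed. $P^\perp$ commutes with multiplication by functions of $x$ and $P^\perp 1=0$, so the cross term equals $(\Phi(F)-\Phi(F_0))\,P^\perp[(u-u^0)/u^0]$. Its $L^2(\Q)$-norm is at most $\|\Phi(F)-\Phi(F_0)\|_\infty\|u-u^0\|_2/\inf u^0\lesssim \|u-u^0\|_2$, uniformly over $\|F\|_{H^\beta}\le M$. With this repair the exponent $\beta/(2+\beta)$ is intact.

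Your fallback of differentiating the identity in $t$ is unnecessary. It would also lower the exponent to $(\beta-2)/(\beta+2)$, since it requires control of $\|u-u^0\|_{H^{4,2}}$.
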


\begin{proof}
The result follows from Proposition~\ref{prop: contract1}  in conjunction with Lemma~\ref{lemma: stability} below,
where we take $\ell = Lm^{\beta/(\beta+2)}$.
\end{proof}

\begin{lemma}[Stability Estimate]\label{lemma: stability}
	Let $B_{H^\beta}(R):=\{F\in H^\beta(\X): \|F\|_{H^\beta}\leq R\}$ be the ball  in $H^\beta(\X)$ of radius $R>0$. Assume that $u_0\in H^{1+\beta}(\X)$ does not lie in an eigenspace of $\SSS_{\theta,F}$.
	For all $R>0$, there exists a constant $L>0$ such that for all  small enough $\delta>0$,
	\begin{align*}
		\sup\{|\theta-\theta_0| + \|F-F_0\|_2 : (\theta,F)\in \Theta\times B_{H^\beta}(R), \ \|K_\theta F - K_{\theta_0}F_0\|_2
		\leq \delta\}\leq L\,\delta^{\beta/(\beta+2)}.
	\end{align*}
\end{lemma}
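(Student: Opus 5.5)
The plan is to interpolate, then invert the PDE pointwise, and to handle $\theta$ through the identifiability mechanism of Lemma~\ref{lemma: identifiability}. Write $u=K_\theta F$, $u^0=K_{\theta_0}F_0$ and $w=u-u^0$, so $\|w\|_2\le\delta$. By Theorem~\ref{thm: existence}, and since $\|\Phi(F)\|_{H^\beta}$ is bounded on $B_{H^\beta}(R)$ because $\Phi$ has bounded derivatives, both $u$ and $u^0$ lie in a fixed ball of $H^{2+\beta,1+\beta/2}(\Q)$. The interpolation inequality \eqref{EqInterpolation} with $(1-\nu)(2+\beta)=2$, i.e.\ $\nu=\beta/(2+\beta)$, then gives
\[
\|w\|_{H^{2,1}}\lesssim \|w\|_2^{\beta/(2+\beta)}\|w\|_{H^{2+\beta,1+\beta/2}}^{2/(2+\beta)}\lesssim \delta^{\beta/(\beta+2)}.
\]
Next we subtract the two PDEs $\eL_{\theta,F}u=0$ and $\eL_{\theta_0,F_0}u^0=0$:
\begin{equation}\label{eq:stabdiff}
\tfrac{\theta-\theta_0}{2}\Delta_x u^0-(\Phi(F)-\Phi(F_0))\,u^0=-\tfrac{\theta}{2}\Delta_x w+\del_t w+\Phi(F)w=:r,
\end{equation}
where $\|r\|_{L^2(\Q)}\lesssim\|w\|_{H^{2,1}}\lesssim\delta^{\beta/(\beta+2)}$ uniformly over $\theta\in\Theta$ and $\|\Phi(F)\|_\infty\le C(R)$.

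The main step is to separate the two unknowns on the left of \eqref{eq:stabdiff}. The first term varies in $t$, while $\Phi(F)-\Phi(F_0)$ is time independent. Divide by $u^0$, which is bounded below by a positive constant on $\overline\Q$ by the Feynman--Kac/maximum principle, since $u_0\ge u_{0,\min}$ and $T<\infty$. This gives
\[
\tfrac{\theta-\theta_0}{2}\,a(x,t)-h(x)=r/u^0,\qquad a:=\Delta_x u^0/u^0,\quad h:=\Phi(F)-\Phi(F_0).
\]
Project onto functions that are orthogonal in $L^2(0,T)$ to constants in $t$, i.e.\ apply $P v:=v-T^{-1}\int_0^T v\,dt$. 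This annihilates $h$, so
\[
|\theta-\theta_0|\,\|Pa\|_{L^2(\Q)}\lesssim\|r\|_2.
\]
Now $\|Pa\|_2>0$, because $a=\tfrac{2}{\theta_0}(\Phi(F_0)+\del_t\log u^0)$ and $\del_t^2\log u^0\neq0$. That non-vanishing is guaranteed by Theorem~\ref{thm:information}(i) under the eigenspace assumption on $u_0$ at $(\theta_0,F_0)$. Since $a$ depends only on the truth, $\|Pa\|_2$ is a fixed positive constant, and $|\theta-\theta_0|\lesssim\delta^{\beta/(\beta+2)}$ follows.

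Returning to \eqref{eq:stabdiff}, we get $\|h\,u^0\|_2\le\|r\|_2+|\theta-\theta_0|\,\|\Delta_xu^0\|_2\lesssim\delta^{\beta/(\beta+2)}$. Since $u^0\ge c>0$, this yields $\|h\|_{L^2(\X)}\lesssim\delta^{\beta/(\beta+2)}$. Finally, $\Phi'\ge c_R>0$ on the bounded range $[-C R,CR]$ of $F,F_0$ (Sobolev embedding, $\beta>d/2$), so the mean value theorem gives $\|F-F_0\|_2\le c_R^{-1}\|h\|_2$.

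I expect the main obstacle to be making this projection step quantitative and uniform: confirming that $\|Pa\|_2$ is bounded below, with $a$ depending only on $(\theta_0,F_0)$, and not on $(\theta,F)$. This is why the difference is linearised around $u^0$ rather than $u$. A secondary point is to check that the lower bound on $u^0$ and the $L^2$ bound on $\Delta_x u^0$ follow from Theorem~\ref{thm: existence}. The restriction to small $\delta$ is then only cosmetic.
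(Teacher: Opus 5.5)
Your proof is correct. It establishes the key intermediate bound \eqref{eq: stab_neighborhood}, $|\theta-\theta_0|+\|F-F_0\|_2\le A(R)\|K_\theta F-K_{\theta_0}F_0\|_{H^{2,1}}$, by a genuinely different and more direct route than the paper; the interpolation step is the same in both.

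\textbf{The paper's route.} It argues locally. It combines a Kekkonen-type bound $\|F-F_0\|_2\lesssim\|w\|_{H^{2,1}}+|\theta-\theta_0|$ with the linearisation $w=(\theta-\theta_0)\dot K_{\theta_0}F_0+I_{\theta_0,F_0}(F-F_0)+R(\theta,F)$. The linear part is bounded below by $\tilde i_{\theta_0,F_0}^{1/2}|\theta-\theta_0|$, and the remainder is controlled through Lemma~\ref{lemma: bounds}. After adding the inequalities, it extends from a neighbourhood of $(\theta_0,F_0)$ to all of $\Theta\times B_{H^\beta}(R)$ by the compactness/injectivity argument of Lemma~\ref{LemmaInversion}.

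\textbf{Your route.} You notice that subtracting the two PDEs gives an exact identity with no remainder. Dividing by $u^0$ and applying your projection $P$ onto the complement of time-constant functions then eliminates the nuisance term $\Phi(F)-\Phi(F_0)$. This is a quantitative version of the proof of Lemma~\ref{lemma: identifiability}. The constant $\|P(\Delta_x u^0/u^0)\|_2$ plays the role of $\tilde i_{\theta_0,F_0}^{1/2}$, and both are positive exactly when $\del_t^2\log u^0\neq0$.

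\textbf{What each buys.} Your argument is global from the start. It needs neither the LAN remainder estimates, nor the positivity of the efficient information, nor Lemma~\ref{LemmaInversion}. It also yields an explicit $A(R)$, depending on $R$ only through $\sup\|\Phi(F)\|_\infty$ and $\min\Phi'$ on a bounded interval. By contrast, the paper's compactness step gives no control of the constant away from the truth. The paper's argument, in exchange, ties stability directly to the efficient Fisher information and reuses estimates it needs anyway for the BvM.

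\textbf{One detail to state explicitly.} Take $R\ge\|F_0\|_{H^\beta}$, which is harmless since it only enlarges the set. This ensures $F_0$ also takes values in the interval on which you bound $\Phi'$ from below.
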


\section{Proof of the Semi-parametric BvM (Theorem \ref{thm: main})}

\label{sec:proofBvM}

The theorem follows from Theorem~12.9 in \cite{GhosalVdV}, which is an adaptation of results by \cite{castillo}.
We apply Theorem~12.9 with the least favorable transformation $(\theta,F)\mapsto (\theta_0,F+(\theta-\theta_0)\gamma_{\theta_0,F_0})$. 
To ease notation we write $\gamma$ for $\gamma_{\theta_0,F_0}$.

Since we put a Gaussian prior on $F$ and $\gamma\in\HHH$, the prior shift condition (12.14) of Theorem~12.9 is shown to be satisfied similarly to Example~12.11 in the same reference. For the sake of completeness, we prove it at the end of this section (see \ref{sec:shift}).

The posterior consistency conditions in Theorem~12.9 of \cite{GhosalVdV}, 
can all be obtained from the statement of Proposition~\ref{prop: contract2}.
Thus we need only verify (12.13) in \cite{GhosalVdV}. 
Here in view of Proposition~\ref{prop: contract2} we may assume that the sets $\Theta_n\times H_n$ are contained in shrinking balls 
$\{(\theta,h)\in \Theta\times H: |\theta-\theta_0|<\eps_n,\|h-F_0\|_2<\eps_n\}$ of $(\theta_0,F_0)$, 
for the rate $\eps_n\asymp \delta_n^{\beta/(\beta+2)}$, where $\delta_n$ is as in Proposition~\ref{prop: contract1}. 
By the same proposition we may also assume that $H_n\subset \{h\in H: \|h\|_{H^\beta} < M\}$. 
Note also that $\gamma\in H$ by assumption that $\HHH\subseteq H^\alpha(\X)\subset H$. 

Straightforward computations using \eqref{eq: loglikelihood} and the fact that for any $F\in H, \ \dot K_{\theta_0}F-I_{\theta_0,F}\gamma_{\theta_0,F}$ is
orthogonal to the range of $I_{\theta_0,F}$
(whence the identity $\ip{I_{\theta_0,F}(F-F_0)}{\dot K_{\theta_0}F-I_{\theta_0,F}\gamma}_2
=\ip{I_{\theta_0,F}(F-F_0)}{I_{\theta_0,F}(\gamma_{\theta_0,F}-\gamma)}_2$
yield
\[
\log \frac{{dP_{\theta,F}^n}}{dP_{\theta_0, F+(\theta-\theta_0)\gamma}^n}(X^n)
= \sqrt{n}(\theta-\theta_0) G_{\theta_0}(F,\gamma) - \frac{n}{2}|\theta-\theta_0|^2 \Tilde i_{\theta_0, F}(\gamma) + R_n(\theta,F),
\]
where, for $\dot W=\sqrt n(X^n-K_{\theta_0}F_0)$,
\begin{align*}
	G_{\theta_0}(F,h)&= \ip{\dot W}{\dot K_{\theta_0}F - I_{\theta_0,F} h}_2,\\
	\Tilde i_{\theta_0, F}(h)&= \|\dot K_{\theta_0}F\|_2^2 - \|I_{\theta_0,F}h\|_2^2 ,\\
	R_n(\theta,F) = \sqrt{n}&\ip{\dot W}{K_{\theta}F-K_{\theta_0}F - (\theta-\theta_0)\dot K_{\theta_0}F}_2\\
	&-\sqrt{n}\ip{\dot{W}}{K_{\theta_0}(F+(\theta-\theta_0)\gamma) - K_{\theta_0}F - (\theta-\theta_0)I_{\theta_0,F}\gamma}_2\\
	&- n\ip{K_{\theta}F-K_{\theta_0}F - (\theta-\theta_0)\dot K_{\theta_0}F}{ K_{\theta_0}F-K_{\theta_0}F_0}_2\\
	& +n\ip{K_{\theta_0}(F+(\theta-\theta_0)\gamma)-K_{\theta_0}F - (\theta-\theta_0) I_{\theta_0,F}\gamma}{K_{\theta_0}F-K_{\theta_0}F_0}_2\\
	& +\frac{n}{2}(\theta-\theta_0)^2\|\dot K_{\theta_0}F\|_2^2- \frac{n}{2}\|(K_{\theta}-K_{\theta_0})F\|_2^2\\
	& -\frac{n}{2}(\theta-\theta_0)^2\|I_{\theta_0,F}\gamma\|_2^2+ \frac{n}{2}\|K_{\theta_0}(F+(\theta-\theta_0)\gamma)-K_{\theta_0}F\|_2^2\\
	&- n(\theta-\theta_0)\ip{K_{\theta_0}F - K_{\theta_0}F_0 - I_{\theta_0,F}(F-F_0)}{\dot K_{\theta_0}F - I_{\theta_0,F}\gamma}_2\\
	&+n(\theta-\theta_0)\ip{I_{\theta_0,F}(F-F_0)}{I_{\theta_0,F}(\gamma-\gamma_{\theta_0,F})}_2.
\end{align*}
Therefore, condition (12.13) of Theorem~12.9 in \cite{GhosalVdV} is satisfied, with $\tilde G_n:=G_{\theta_0}(F_0,\gamma)$
and $\tilde i_n=\tilde i_{\theta_0,F_0}$, if 
\begin{align}
	\sup_{(\theta,F)\in \Theta_n\times H_n} \frac{\sqrt{n}(\theta-\theta_0)| G_{\theta_0}(F, \gamma)-G_{\theta_0}(F_0,\gamma)|}{1+n(\theta-\theta_0)^2} &\gaatp 0,\label{eq: LANapprox}\\
	\sup_{(\theta,F)\in \Theta_n\times H_n} \frac{n(\theta-\theta_0)^2|\Tilde i_{\theta_0,F} (\gamma)-\tilde i_{\theta_0, F_0}|}{1+n(\theta-\theta_0)^2} &\to 0,\label{EqContFI}\\
	\sup_{(\theta,F)\in \Theta_n\times H_n} \frac{|R_n(\theta,F)|}{1+n(\theta-\theta_0)^2} &\gaatp 0.\label{eq: remaindercondition}
\end{align}
The Bernstein-von Mises theorem is then satisfied at $(\theta_0,F_0)$ with 
$\Delta_{\theta_0, F_0}^n:= \Tilde i_{\theta_0,F_0}^{-1}G_{\theta_0}(F_0, \gamma)$, which is exactly $N(0, \Tilde i_{\theta_0,F_0}^{-1})$
distributed. (Note that the statement of Theorem~12.9 in \cite{GhosalVdV} needs an extra factor $n^{-1/2}$, 
as acknowledged in the list of errata \cite{vaartghosal_errata}.) To prove \eqref{eq: LANapprox}--\eqref{eq: remaindercondition}, we rely on the results of Lemmas~\ref{lemma: bounds} and \ref{lemma: refinedBounds}.

Since $\sqrt{n}|\theta-\theta_0|\leq 1+n(\theta-\theta_0)^2$, for \eqref{eq: LANapprox} it suffices
to show that $\sup_{F\in H_n}|G_{\theta_0}(F,\gamma) - G_{\theta_0}(F_0,\gamma)|$ tends to zero
in probability. This is the supremum of the sum of the Gaussian processes
	$\ip{\dot W}{\dot K_{\theta_0}F - \dot K_{\theta_0}F_0}_2$  and $\ip{\dot W}{I_{\theta_0,F}\gamma -
		I_{\theta_0,F_0}\gamma }_2$.
	The first is indexed by the sets
$\mathcal{T}_n=\{\dot K_{\theta_0}F - \dot K_{\theta_0}F_0: F\in H_n\}$ and has
intrinsic metric $d(F,G)= \|\dot K_{\theta_0}F-\dot K_{\theta_0}G\|_2^2$, for $F,G\in H_n$.
By Lemma~\ref{lemma: bounds} (iv) the diameter
$\sup_{F,G\in H_n} d(F,G)$ of the set $\mathcal{T}_n$ relative to this metric
is bounded above by  a multiple of $\eps_n$, since $H_n\subset \{h: \|h-F_0\|_2<\eps_n\}$ by construction.
Moreover, by Lemma~\ref{lemma: refinedBounds} (i) the set $\mathcal{T}_n$ belongs to a multiple of
the unit ball in the space $H^{2+\eta,1+\eta/2}(\Q)$, since also $H_n\subset \{h : \|h\|_{H^\beta(\X)}\le M\}$
by construction. The logarithm of the $\eps$-covering numbers of the unit ball of $H^{2+\eta,1+\eta/2}(\Q)$ relative to the
$L_2(\Q)$-metric are bounded above by a multiple of $(1/\eps)^{d/(2+\eta)+1/(1+\eta/2)}$. We may apply  Dudley's bound (\cite{Dudley73}, or  Corollary~2.2.9 in \cite{vdV_wellner2023}) to get
\begin{align}
	\EE\sup_{F\in H_n}|\ip{\dot W}{\dot K_{\theta_0}F - \dot K_{\theta_0}F_0}| &\lle \int_0^{\eps_n}\sqrt{\log{N(\eps,\{u: \|u\|_{H^{2+\eta,1+\eta/2}(\Q)}\leq 1\}, \|\cdot\|_2)}}\,d\eps\nonumber\\
	&\lle \int_0^{\eps_n} \left(\dfrac{1}{\eps}\right)^{(\frac{d}{2+\eta} + \frac{1}{1+\eta/2})\frac{1}{2}}\,d\eps. \label{eq: dudley}
\end{align}
Whenever $d < 2+2\eta$, the last integral is bounded by a constant times $\eps_n$ and hence tends to 0. Choosing $\eta\in (1/2,1)$, we have that this is certainly verified for $d\le 3$. The second Gaussian process is indexed by $\mathcal{T}_n:=\{I_{\theta_0,F}\gamma - I_{\theta_0,F_0}\gamma: F\in H_n\}$. We bound its supremum with the same argument; this time using item (ii) of Lemma \ref{lemma: bounds} instead of item (iv) to bound the diameter by a multiple of $\eps_n$, and using item (iv) of Lemma \ref{lemma: refinedBounds} to show that $\mathcal{T}_n$ belongs to a multiple of the unit ball in $H^{2+\eta,1+\eta/2}$. This concludes the proof of \eqref{eq: LANapprox}.

To bound \eqref{EqContFI} it suffices to show that $|\tilde i_{\theta_0,F}(\gamma) - \tilde i_{\theta_0,F_0}|$
tends to 0 uniformly  over $H_n$. Now for $F\in H_n$,
\begin{align*}
	&|\tilde i_{\theta_0,F}(\gamma) - \tilde i_{\theta_0,F_0}|
	= \left| \|\dot K_{\theta_0}F\|^2_2 - \|\dot K_{\theta_0} F_0\|^2_2 + \|I_{\theta_0,F_0}\gamma\|^2_2 - \|I_{\theta_0,F}\gamma\|^2_2 \right| \\
	&=\Bigl|\ip{\dot K_{\theta_0}F - \dot K_{\theta_0}F_0}{\dot K_{\theta_0}F + \dot K_{\theta_0} F_0} + \ip{I_{\theta_0,F_0}\gamma - I_{\theta_0,F}\gamma}{I_{\theta_0,F_0}\gamma + I_{\theta_0,F}\gamma}\Bigr|\\
	&\le \|\dot K_{\theta_0}F - \dot K_{\theta_0}F_0\|\left(\|\dot K_{\theta_0}F\|+\|\dot K_{\theta_0}F_0\|\right) + \|I_{\theta_0,F_0}\gamma - I_{\theta_0,F}\gamma\|\Bigl(\|I_{\theta_0,F_0}\gamma\| + \|I_{\theta_0,F}\gamma\|\Bigr)\\
	&\lle \|F-F_0\|_2, 
\end{align*}
in view of Lemma~\ref{lemma: bounds} (iv) and (ii), where the constant in the last inequality depends on $M$. For $F\in H_n$,
the right side is bounded by $\eps_n$ and hence tends to zero.

Finally we show that \eqref{eq: remaindercondition} holds by dealing with each of the eight terms in 
$R_n(\theta,F)$ separately. 

Using that $\sqrt{n}/(1+n(\theta-\theta_0)^2)\leq 1/|\theta-\theta_0|$, we can bound the ratio of the first term with $1+n(\theta-\theta_0)^2$ by $\ip{\dot W}{G_{\theta,F}}_2$ where,
\[G_{\theta,F}:= \frac{K_\theta F-K_{\theta_0}F-(\theta-\theta_0)\dot K_{\theta_0}F}{\theta-\theta_0}.\]
This is a Gaussian process indexed by $\mathcal{G}_n = \{G_{\theta,F} : (\theta,F)\in \Theta_n\times H_n\}$. By Lemma~\ref{lemma: refinedBounds} (ii), the $H^{2+\eta,1+\eta/2}(\Q)$-norm of $G_{\theta,F}\in\mathcal{G}_n$ is bounded above by a multiple of $|\theta-\theta_0|$, which is bounded above by a multiple constant of $\eps_n$ uniformly in $H_n$. The same is true for the $L^2$-norm. Furthermore, the intrinsic metric of $\ip{\dot W}{G}_2$ is the $L^2$-norm of $G$. We can thus use Dudley's bound again (c.f. \eqref{eq: dudley}) to show that $\sup_{G\in\mathcal{G}_n} \bigl| \ip{\dot W}{G}_2 \bigr|$ tends to 0 in probability for $d\in\{1,2,3\}$ by the same argument as in the proof of \eqref{eq: LANapprox}.

The second term is dealt with similarly by using Lemma~\ref{lemma: refinedBounds} (iii) instead of (ii), and by bounding the $L^2$-diameter by a constant multiple of $\eps_n$ using item (vi) of Lemma \ref{lemma: bounds}. 

By the Cauchy-Schwartz inequality and (i) and (v) of Lemma~\ref{lemma: bounds}, the third term is bounded above by a multiple of $n(\theta-\theta_0)^2\|F\|_2\|F-F_0\|_2(1+M^2)$, which is $o(1+n(\theta-\theta_0)^2)$ uniformly in $H_n$.

By a similar argument using item (vi) instead of (v), the fourth term can also be shown to be of order $o(1+n(\theta-\theta_0)^2)$ uniformly over $H_n$.

By items (iii) and (v) of Lemma`\ref{lemma: bounds}, the fifth term is bounded above by a multiple of $n|\theta-\theta_0|^3$ uniformly over $\Theta_n\times H_n$, which is $o(1+n(\theta-\theta_0)^2)$. 

By items (i) and (vi) of Lemma`\ref{lemma: bounds}, the sixth term is bounded above by a multiple of $n|\theta-\theta_0|^3$ uniformly over $\Theta_n\times H_n$, which is $o(1+n(\theta-\theta_0)^2)$.

We bound the ratio of the seventh term with $1+n(\theta-\theta_0)^2$ over $H_n$ by,
\begin{align*}
	&\sqrt{n}\bigl|\ip{K_{\theta_0}F - K_{\theta_0}F_0 - I_{\theta_0,F}(F-F_0)}{\dot K_{\theta_0}F - I_{\theta_0,F}\gamma}_2\bigr|\\ 
	&\qquad\leq \sqrt{n}\|K_{\theta_0}F - K_{\theta_0}F_0 - I_{\theta_0,F}(F-F_0)\|_2\, (\|\dot K_{\theta_0}F\|_2 + \|I_{\theta_0,F}\gamma\|_2)\\
	&\qquad\lle \sqrt{n}\|F-F_0\|_2\|F-F_0\|_\infty.
\end{align*}
by Lemma~\ref{lemma: bounds} (vi). We show that this tends to zero uniformly in $F\in H_n$ at the end of
the proof.

To bound the last term's ratio with $1+n(\theta-\theta_0)^2$, we use the definition of
$\gamma_{\theta_0,F}$ to split it into the following four terms:
\begin{align*}
	&\sqrt{n}\ip{I_{\theta_0,F_0}(F-F_0)}{I_{\theta_0,F_0}\gamma -\dot K_{\theta_0}F_0}_2\\
	&\qquad+\sqrt{n}\ip{I_{\theta_0,F_0} (F-F_0)}{(I_{\theta_0,F}-I_{\theta_0,F_0})\gamma}_2\\
	&\qquad-\sqrt{n}\ip{I_{\theta_0,F_0} (F-F_0)}{\dot K_{\theta_0}F - \dot K_{\theta_0}F_0}_2\\
	&\qquad+\sqrt{n}\ip{(I_{\theta_0,F}-I_{\theta_0,F_0})(F-F_0)}{I_{\theta_0,F}\gamma - \dot K_{\theta_0}F}_2.
\end{align*}
By definition of $\gamma=\gamma_{\theta_0,F_0}$, the first term is equal to 0.
The second and third terms are bounded by a multiple of $\sqrt{n}\|F-F_0\|^2_2$ by
(vii) and (iv) of Lemma~\ref{lemma: bounds}, respectively.
The fourth term is bounded by a multiple of $\sqrt{n}\|F-F_0\|_2\|F-F_0\|_\infty$
by item (ii) of the same lemma.

To complete the proof of the theorem we show that $\sqrt{n}\sup_{H\in H_n}\|F-F_0\|_2\|F-F_0\|_\infty$
tends to zero in probability. Since $\beta>d/2$, we can choose $\beta'\in (d/2,\beta)$
and apply the interpolation inequality \eqref{EqInterpolation}
\[\|F-F_0\|_{H^{\beta'}}\lle \|F-F_0\|_2^{1-\beta'/\beta}\|F-F_0\|_{H^\beta}^{\beta'/\beta},\]
to obtain that $\|F-F_0\|_{H^{\beta'(\X)}} \lle \eps_n^{1-\beta'/\beta}$ uniformly in $F\in H_n$.
Then by Sobolev embedding $\sup_{F\in H_n}\|F-F_0\|_\infty$ tends to zero at the same order.
Thus $\sqrt{n}\sup_{H\in H_n}\|F-F_0\|_2\|F-F_0\|_\infty$ is of the order $\sqrt{n}\eps_n^{2-\beta'/\beta}$.
For $\eps_n=\delta_n^{\beta/(2+\beta)}$ and $\delta_n$ given in Proposition~\ref{prop: contract1},
this tends to zero if
	\[\alpha(2-(\beta-\beta')) < 2(\beta-\beta') - \frac{d}{2}\beta - d - 4 .\]
	Since we have in the statement of the theorem that $\beta>2+d$, it is always possible to select $\beta'\in (d/2,\beta)$ for which $\beta-\beta'=2+d/2$. The inequality above then reduces to $\alpha>\beta$, which is true by assumption. This concludes the proof of Theorem~\ref{thm: main}. %The true theoretical upper bound is not 2+d. Since beta is integer valued it actually is beneficial to include the case  $beta\geq 2+d$ but whatever. 

\begin{remark}
	The condition $d\leq 3$ came in handy when controlling the entropy integral of the unit ball in $H^{2+\eta,1+\eta/2}(\Q)$ because we could only apply Lemma \ref{lemma: refinedBounds} for $\eta<1$. Theoretically (in view of \eqref{eq:refinedCIterated} and Sobolev embedding), the true upper bound on $\eta$ for which Lemma \ref{lemma: refinedBounds} is expected to hold is $\beta-d/2$. We did not pursue this here to simplify the proof of Lemma \ref{lemma: refinedBounds}.
\end{remark}

\subsection{Proof of the Prior Shift Condition}
\label{sec:shift}

We again denote $\gamma=\gamma_{\theta_0,F_0}$. With our choices of least favourable transformations $(\theta,F)\mapsto (\theta_0, F+(\theta-\theta_0)\gamma)$, condition (12.14) in \cite{GhosalVdV} reduces to the following condition on the prior on $F$:
\begin{align*}
	\sup_{\theta\in\Theta_n, F\in H_n} \frac{|\log (d\pi_{n,F+(\theta-\theta_0)\gamma}/d\pi_{n,F}(F))|}{1+n(\theta-\theta_0)^2}\to 0,
\end{align*}
with $\Theta_n\times H_n$ having posterior mass tending to 1. Denote by $\mathcal{H}$ the RKHS of $\pi_{n,F}$. By construction, $\|\cdot\|_{\mathcal{H}} = \sqrt{n}\delta_n\|\cdot\|_\HHH$, and so $\gamma\in\HHH\implies \gamma\in\mathcal{H}$. By Cameron-Martin's formula, we then have that
\[\frac{d\pi_{F+(\theta-\theta_0)\gamma}}{d\pi_F}(F) = e^{(\theta-\theta_0)\|\gamma\|_\mathcal{H} U(F) - (\theta-\theta_0)^2\|\gamma\|_\mathcal{H}^2},\]
with $U(F)$ some measurable transformation of $F$ with a standard normal distribution. Consider now $\Theta_n=\Theta$ and $H_n=\{F\in H: |U(F)|\leq \sqrt{2Cn}\delta_n\}$ for some $C>0$ to be determined. Over $\Theta\times H_n$, we thus have
\begin{align*}
	\frac{|\log (d\pi_{n,F+(\theta-\theta_0)\gamma}/d\pi_{n,F}(F))|}{1+n(\theta-\theta_0)^2}&\lle \frac{n(\theta-\theta_0)\delta_n^2\|\gamma\|_\HHH}{1+n(\theta-\theta_0)^2} + \frac{n(\theta-\theta_0)^2\delta_n^2\|\gamma\|^2_\HHH}{1+n(\theta-\theta_0)^2}\\
	&\leq \sqrt{n}\delta_n^2\|\gamma\|_\HHH + \delta_n^2\|\gamma\|_\HHH^2.
\end{align*}
The right and side goes to 0 provided that $\delta_n << n^{-1/4}$, which is true for $\alpha+2>d/2$. This is certainly the case by assumption on $\alpha$. 

The proof is concluded upon showing that the selected $\Theta\times H_n$ has posterior mass tending to 1. To this end, we apply the remaining mass principle (Theorem 8.20 in \cite{GhosalVdV}). Observe that the Kullback-Leibler divergence $K(P^n_{\theta_0,F_0}, P^n_{\theta,F})$ and the variance $V_{2,0}(P^n_{\theta_0,F_0}, P^n_{\theta,F})$ are equal to 1/2 and 1 times $n\|K_{\theta}F-K_{\theta_0}F_0\|^2_2$ respectively. By item (i) in the proof of Proposition \ref{prop: contract1}, it follows that 
\[\Pi((K\vee V_{2,0})(P^n_{\theta_0,F_0}, P^n_{\theta,F})\leq n\delta_n^2)\geq \Pi(\|K_{\theta}F-K_{\theta_0}F_0\|_{2}\leq \delta_n)\geq e^{-An\delta_n^2}.\]
Furthermore, the Gaussian tail bound for the normal distribution gives us that $\Pi((\Theta\times H_n)^C)\leq e^{-(C^2-1)n\delta_n^2}$. We select $C^2>A+1$ and the remaining mass principle can thus be applied to conclude that the posterior mass of $\Theta\times H_n$ must tend to 1. 

\section{Proofs of Posterior Contraction and Stability Estimate (Section \ref{sec:contraction})}

\label{sec:proofContraction}

\subsection{Proof of Proposition \ref{prop: contract1}}

The theorem is a consequence of the main result of Theorem~6 in \cite{vaartghosal2007} (or see
Theorem~8.31 in \cite{GhosalVdV} combined with Theorem~8.20,
or Theorem~1.3.2 in \cite{Nickl23}). Both the Kulback-Leibler divergence
and the testing metric for the white noise model are dominated by the metric
$d_T((\theta_1,F_1),(\theta_2,F_2)) := \|K_{\theta_1}F_1 - K_{\theta_2}F_2\|_2$. Thus it suffices
to verify the three conditions:
\begin{enumerate}[label=(\roman*),leftmargin=*, itemsep=0.5ex, before={\everymath{\displaystyle}}]
	\item $\Pi_n\bigl((\theta,F)\in\Theta\times H: d_T((\theta,F), (\theta_0,F_0))\leq \delta_n\bigr) \geq e^{-An\delta_n^2}$, for some constant $A>0,$
	\item $\Pi_n\bigl((\Theta\times H_n)^C\bigr)\leq e^{-Bn\delta_n^2}$, for some $B>A+2,$
	\item $\log N(\overline{m}\delta_n,\Theta\times H_n, d_T )\leq n\delta_n^2$ for all sufficiently large $\overline{m}$.
\end{enumerate}
For our Gaussian prior these conditions can be verified following the method of
\cite{vdVvZ2007}, \cite{vdVvZ2008}. We follow the presentation in 
the proof of Theorem 2.2.2 in \cite{Nickl23}, which is suited to Sobolev space.
We will be assuming $\kappa =2$ in the definition of the prior and of $\delta_n$ when mentioning this proof.

Starting with (ii), observe that $\Pi_n\bigl(((\Theta\times H_n)^C)\bigr)=\pi_{n,F}(H_n^C)$ which can subsequently be bounded above following steps i) and iii) in the proof of Theorem 2.2.2 in \cite{Nickl23}.
Note that step iii) requires point (i) to be proven which we do right below. 

For (i), we use the following Lipschitz type condition, whose proof can be found at the end of this section.

\begin{lemma}[Lipschitz Condition]\label{lemma: lipschitz}
	Let $\beta>2+d/2$. Let $B_{H^\beta}(R)$ be the ball  in $H^\beta(\X)$ of radius $R>0$. There exists a constant  $C=C(R)$ dependent on $R$ such that  for all $\theta_1,\theta_2\in\Theta
	$ and $F_1, F_2\in B_{H^\beta}(R)$,
	\begin{align}
		\|K_{\theta_1}F_1 - K_{\theta_2}F_2\|_2  \leq C(R)\left(|\theta_1-\theta_2| + \|F_1-F_2\|_{(H^2(\X))^*} \right),\label{eq: lipschitz}
	\end{align} 
\end{lemma}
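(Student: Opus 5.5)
The plan is to split the difference with the triangle inequality,
\[
\|K_{\theta_1}F_1 - K_{\theta_2}F_2\|_2 \le \|K_{\theta_1}F_1 - K_{\theta_2}F_1\|_2 + \|K_{\theta_2}F_1-K_{\theta_2}F_2\|_2 ,
\]
and to treat each term by subtracting the PDEs. In both cases the difference $w$ of two solutions has zero initial condition, since both solve \eqref{eq:PDETorus} with the same $u_0$. It therefore lies in $H^{2,1}_{B,0}(\Q)$, and it satisfies an equation $\eL w = \text{source}$ for the relevant parabolic operator. Throughout I will use Theorem~\ref{thm: existence}: for $F\in B_{H^\beta}(R)$ it gives $\|u_{\theta,F}\|_{H^{2+\beta,1+\beta/2}}\le C(R)$, uniformly in $\theta\in\Theta$. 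This uniformity holds because the constants in the parabolic estimates depend only on $\theta_{\min},\theta_{\max}$ and on $\|\Phi(F)\|_{H^\beta}\lesssim C(R)$, the latter since $\Phi$ has bounded derivatives and $\beta>d/2$.

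For the $\theta$-increment, set $w=u_{\theta_1,F_1}-u_{\theta_2,F_1}$. It solves
\[
\eL_{\theta_1,F_1} w = -\tfrac12(\theta_1-\theta_2)\Delta_x u_{\theta_2,F_1}, \qquad w(\cdot,0)=0 .
\]
The inverse $\eL_{\theta_1,F_1}^{-1}:L^2(\Q)\to H^{2,1}_{B,0}(\Q)$ is bounded. I will get a bound on it that is uniform over $\theta_1\in\Theta$ and $F_1\in B_{H^\beta}(R)$ from a simple energy estimate. Multiply by $w$, integrate over $\X$, and use $\Phi(F)\ge f_{\min}>0$ together with periodicity. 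This gives $\|w\|_{L^2(\Q)}\le C\|\text{source}\|_{L^2(\Q)}$ with $C$ depending only on $T$. Hence this term is at most $C|\theta_1-\theta_2|\,\|u_{\theta_2,F_1}\|_{H^{2,1}}\le C(R)|\theta_1-\theta_2|$.

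For the $F$-increment, set $w=u_{\theta_2,F_1}-u_{\theta_2,F_2}$. It solves
\[
\eL_{\theta_2,F_1}w = \bigl(\Phi(F_1)-\Phi(F_2)\bigr)u_{\theta_2,F_2}, \qquad w(\cdot,0)=0 .
\]
The target bound is in the weak norm $(H^2(\X))^*$, so I will argue by duality. For $\phi\in L^2(\Q)$, let $v$ solve the adjoint (backward) problem $\eL^*_{\theta_2,F_1}v=\phi$ with $v(\cdot,T)=0$; the adjoint is $\tfrac{\theta}{2}\Delta_x+\del_t-\Phi(F_1)$. Then
\[
\ip{w}{\phi}_{L^2(\Q)}=\int_0^T\!\!\int_\X (\Phi(F_1)-\Phi(F_2))\,u_{\theta_2,F_2}\,v\,dx\,dt
\le \|\Phi(F_1)-\Phi(F_2)\|_{(H^2)^*}\int_0^T\|u_{\theta_2,F_2}(\cdot,t)v(\cdot,t)\|_{H^2(\X)}\,dt .
\]
By the backward analogue of the $H^{2,1}$ isomorphism (the time reversal of $\eL_{\theta,F}$ on $H^{2,1}_{C,0}$), $\|v\|_{L^2(0,T;H^2)}\lesssim\|\phi\|_2$ uniformly. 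Since $\beta>2+d/2$, the solution $u_{\theta_2,F_2}(\cdot,t)$ is bounded in $C^2(\X)$ uniformly in $t$ via embedding from $H^{2+\beta,1+\beta/2}$. So the product is controlled by $C(R)\|v\|_{L^2H^2}$, and taking the supremum over $\|\phi\|_2\le1$ bounds the term by $C(R)\|\Phi(F_1)-\Phi(F_2)\|_{(H^2)^*}$.

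The main obstacle is the last step: converting $\|\Phi\circ F_1-\Phi\circ F_2\|_{(H^2)^*}$ into $\|F_1-F_2\|_{(H^2)^*}$. I plan to write
\[
\Phi(F_1)-\Phi(F_2)=m\,(F_1-F_2), \qquad m=\int_0^1\Phi'\bigl(F_2+s(F_1-F_2)\bigr)\,ds .
\]
Then for a test function $\psi\in H^2$, $\ip{m(F_1-F_2)}{\psi}=\ip{F_1-F_2}{m\psi}$, and $\|m\psi\|_{H^2}\lesssim\|m\|_{C^2}\|\psi\|_{H^2}$. It remains to bound $\|m\|_{C^2}\le C(R)$. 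This follows because $F_i\in H^\beta$ with $\beta>2+d/2$ embeds into $C^2$, and $\Phi$ has bounded derivatives of all orders. Combining the two increments yields \eqref{eq: lipschitz}. In the write-up I would take extra care over the uniformity of all constants in $\theta\in\Theta$ and $F\in B_{H^\beta}(R)$.
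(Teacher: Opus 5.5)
Your proof is correct and follows essentially the same route as the paper: you split with the triangle inequality, control the $\theta$-increment by an $L^2$ bound on $\eL_{\theta,F}^{-1}$ applied to $\tfrac12(\theta_1-\theta_2)\Delta_x u$ (the paper cites item (iii) of Lemma~\ref{lemma: bounds} here), and handle the $F$-increment by the duality/adjoint argument underlying Proposition~9 of \cite{kekkonen}. Your multiplier $m=\int_0^1\Phi'(F_2+s(F_1-F_2))\,ds$ with $\|m\|_{C^2}\le C(R)$ is exactly the ``slight adaptation'' to the link function that the paper leaves implicit.
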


% By \eqref{eq: uniformBound}  $\|K_{\theta}F\|_\infty\leq U$ is always true over $\Theta\times H$.
Since $F_0\in H^\beta(\X)$, the triangle inequality gives that $\|F\|_{H^\beta}\leq M':=2M$ if 
$\|F-F_0\|_{H^\beta} \leq M$  and $M\ge \|F_0\|_{H^\beta} $.
Using Lemma~\ref{lemma: lipschitz}, it follows that,
\begin{align*}
	&\Pi_n\bigl((\theta,F)\in \Theta\times H: \|K_\theta F - K_{\theta_0}F_0\|_2 \leq \delta_n\bigr)\\
	%	&\geq \Pi_n\bigl((\theta,F): \|K_\theta F - K_{\theta_0}F_0\|_2\leq \delta_n, \|F-F_0\|_{H^\beta}\leq M\bigr)\\
	&\geq \Pi_n\bigl((\theta,F): |\theta-\theta_0| + \|F-F_0\|_{(H^2(\X))^*}\leq \delta_n/C(M'), \|F-F_0\|_{H^\beta}\leq M\bigr)\\
	%	&\geq \Pi_n\bigl((\theta,F): |\theta-\theta_0|\leq \delta_n/2C(M'),\  \|F-F_0\|_{(H^2(\X))^*}\leq \delta_n/2C(M'), \|F-F_0\|_{H^\beta}\leq M\bigr)\\
	&=\pi_\theta \bigl(|\theta-\theta_0|\leq \delta_n/2C(M')\bigr)\cdot \pi_{n,F}\bigl(\|F-F_0\|_{(H^2(\X))^*}\leq \delta_n/2C(M'), \|F-F_0\|_{H^\beta}\leq M\bigr).
\end{align*}
Since the density of $\pi_\theta$ is assumed to be bounded away from 0, the first factor in the right side of the last display is (easily) bounded below by $e^{-nA'\delta_n^2}$ for an appropriate (small enough) constant $A'$. It remains to lower bound the second factor. This can be done exactly as in step ii) of the proof of Theorem 2.2.2 in \cite{Nickl23} (see equation (2.26) and onward).

Finally, to prove (iii), we note that in view of Lemma~\ref{lemma: lipschitz} a covering of $\Theta\times H_n$ by  $d_T$-balls of radius $\overline{m}\delta_n$ can be obtained by combining sets from separate coverings of  $\Theta$  and $H_n$ by balls of radius $\overline{m}/2C(M)\delta_n$ with respect to the Euclidean distance and the $(H^2(\X))^*$-distance, respectively. The total number of sets is then the product of the two covering numbers, and it follows that
\[\log N(\overline{m}\delta_n,\Theta\times H_n, d_T)\leq \frac {4\text{diam}(\Theta)C(M)}{\overline{m}\delta_n}
+ \log N\Bigl(\frac{\overline{m}\delta_n}{2C(M)}, H_n, \|\cdot\|_{(H^2(\X))^*}\Bigr).\]
For $\overline{m}=\overline{m}(M)$ large enough, the first term is clearly an order smaller than $n\delta_n^2$,
while the second term is dealt with in the same way as in the end of the proof of Theorem 2.2.2 in \cite{Nickl23}.

\subsection{Proof of Lemma \ref{lemma: stability}}
We show below that there exists a constant $A(R)$ such that for $(\theta,F)\in \Theta\times B_{H^\beta}(R)$,
\begin{align}\label{eq: stab_neighborhood}
	|\theta-\theta_0| + \|F-F_0\|_2\leq A(R)\, \|K_{\theta}F-K_{\theta_0}F_0\|_{H^{2,1}}.
\end{align} 
The interpolation inequality \eqref{EqInterpolation} with $\nu=\beta/(2+\beta)$ gives
\begin{align*}
	\|K_\theta F - K_{\theta_0}F_0\|_{H^{2,1}}\lle \|K_\theta F-K_{\theta_0}F_0\|_2^{\beta/(2+\beta)} \|K_\theta F - K_{\theta_0}F_0 \| ^{2/(2+\beta)}_{H^{2+\beta, 1+\beta/2}}.
\end{align*}
Hence the lemma follows if we show that the $H^{2+\beta,1+\beta/2}(\Q)$-norm of $K_\theta F - K_{\theta_0}F_0$ is bounded by a constant,
which may depend on $R$, uniformly in $F\in B_{H^{\beta}}(R)$. This is true since by  Theorem \ref{thm: existence}, we have the bound
\begin{align*}
	\|K_{\theta}F- K_{\theta_0} F_0\|_{H^{2+\beta,1+\beta/2}}&\le \|K_{\theta_0}F_0\|_{H^{2+\beta,1+\beta/2}} + \|K_{\theta}F\|_{H^{2+\beta, 1+\beta/2}} \\
	&\lle 1+\|\Phi(F)\|_{H^\beta}^{1+\beta/2} \\
	&\lle 1+\|F\|_{H^\beta}^{\beta(1+\beta/2)}\\
	&\lle 1 + R^{\beta^2/2+\beta},
\end{align*}
where we used \eqref{eq: estimateSolution} for the second inequality and  (6.2) of Lemma 29 in \cite{nicklVdG} for the before last inequality. This concludes the proof of the lemma, except that it remains to prove 
\eqref{eq: stab_neighborhood}.

\subsubsection{Proof of \eqref{eq: stab_neighborhood}}

Below we prove that there exist constants $C_1(R)$, $C_3(R)$, $\mu(R)$ and $\nu>0$ such that for all 
$\theta\in\Theta$ and $F\in B_{H^{\beta}}(R)$ with $|\theta-\theta_0|+\|F-F_0\|_2<\mu(R)$:
\begin{enumerate}[label=(\roman*),leftmargin=*, itemsep=0.5ex, before={\everymath{\displaystyle}}]
	\item $\|K_\theta F- K_{\theta_0}F_0\|_{H^{2,1}}\geq C_1(R)\|F-F_0\|_2 - |\theta-\theta_0|$,
	\item $\|K_{\theta}F-K_{\theta_0}F_0\|_{H^{2,1}} \geq \tilde i_{\theta_0,F_0}^{1/2} |\theta-\theta_0|/2 - C_3(R)\|F-F_0\|^{1+\nu}_2$, 
\end{enumerate}
Using (i) once and (ii) $N\in\mathbb{N}$ times and adding the inequalities we see that, whenever $|\theta-\theta_0|+\|F-F_0\|_2<\mu(R)$,
\[(N+1)\|K_{\theta}F-K_{\theta_0}F_0\|_{H^{2,1}} 
\geq (N\tilde i_{\theta_0,F_0}^{1/2}/2-1)|\theta-\theta_0| + \bigl(C_1 - NC_3\|F-F_0\|_2^{\nu}\bigr)\|F-F_0\|_2.\]
We can choose $N$ large enough so that the coefficient of $|\theta-\theta_0|$ is positive.
Next for this given $N$ the coefficient of  $\|F-F_0\|_2$ is positive for sufficiently small $\|F-F_0\|_2$. 
It follows that \eqref{eq: stab_neighborhood} is valid for every $(\theta,F)\in\Theta\times B_{H^\beta(R)}$ with
sufficiently small $|\theta-\theta_0|+\|F-F_0\|_2$. 

Lemma~\ref{LemmaInversion} extends this to all $(\theta,F)\in\Theta\times B_{H^\beta(R)}$. Indeed, suppose there
were $(\theta,F)$ with $|\theta-\theta_0|+\|F-F_0\|_2\ge \mu_0$ for some $\mu_0>0$ such that 
$\bigl(|\theta-\theta_0| + \|F-F_0\|_2\bigr)/ \|K_{\theta}F-K_{\theta_0}F_0\|_{H^{2,1}}\to \infty$.  
Since $|\theta-\theta_0| + \|F-F_0\|_2$ is bounded uniformly in $(\theta,F)\in \Theta\times B_{H^\beta}(R)$,
it follows that $\|K_{\theta}F-K_{\theta_0}F_0\|_{H^{2,1}}\to 0$. Since $u_0$ does not lie in an eigenspace of $\SSS_{\theta_0,F_0}$, item (i) of Theorem \ref{thm:information} implies that $(\theta,F)\mapsto K_\theta F$ is injective at $(\theta_0, F_0)$ in view of Lemma \ref{lemma: identifiability}. By Lemma~\ref{LemmaInversion}, the convergence $\|K_{\theta}F-K_{\theta_0}F_0\|_{H^{2,1}}\to 0$ is therefore only possible if $|\theta-\theta_0|+\|F-F_0\|_2\to 0$, contradicting that this quantity is at least $\mu_0$. 

Finally we prove (i) and (ii).

\textit{Proof of (i).}  
Since $u_{\theta,F} \ (= K_\theta F) $ and $u_{\theta,F_0}$ are positive, we can recycle the proof of 
Proposition~10 in \cite{kekkonen} to obtain that for some constant $c$
\[\|\Phi(F)-\Phi(F_0)\|_2 \lle e^{c\|\Phi(F)\vee \Phi(F_0)\|_\infty}\|K_\theta F - K_\theta F_0\|_{H^{2,1}}.\] 
Here the multiplicative constant is uniformly bounded for $F\in B_{H^{\beta}}(R)$, since
$\|\Phi(F)\|_\infty \lle 1+\|F\|_\infty \lle 1+\|F\|_{H^\beta}$.
Using the triangle inequality and item (iii) of Lemma~\ref{lemma: bounds}, we infer the existence of a positive constant $D(R)$ such that,
\begin{align*}
	\|\Phi(F)-\Phi(F_0)\|_2&\lesssim D(R)\bigl(\|K_\theta F - K_{\theta_0} F_0\|_{H^{2,1}} + \|K_{\theta_0} F_0 - K_{\theta}F_0\|_{H^{2,1}} \bigr)\\
	&\leq D_1(R)\bigl(\|K_\theta F - K_{\theta_0} F_0\|_{H^{2,1}} + |\theta-\theta_0| \bigr).
\end{align*}
By Sobolev embedding and the interpolation inequality \eqref{EqInterpolation},
we have $\|F-F_0\|_\infty\lle \|F-F_0\|_{H^{\beta'}}\lle \|F-F_0\|_2^\nu \|F-F_0\|_{H^\beta}^{1-\nu}$,
for any $\beta'\in (d/2,\beta)$ and $\nu = 1-\beta'/\beta$. 
It follows that $\|F-F_0\|_\infty\to 0$ if $\|F-F_0\|_2\to 0$ and $F\in B_{H^\beta}(R)$.
Hence because $F_0$ is bounded away from 0 and $\infty$, so is $F$ for sufficiently small $\|F-F_0\|_2$.
Because $\Phi'$ is bounded away from zero on compact intervals, it follows that
$\|F-F_0\|_2\lesssim \|\Phi(F)-\Phi(F_0)\|_2$ for sufficiently small $\|F-F_0\|_2$.
Together with the preceding display this gives (i).

\textit{Proof of (ii).} We write the difference $K_\theta F - K_{\theta_0}F_0$ as
\[K_\theta F - K_{\theta_0}F_0 = (\theta-\theta_0)\dot K_{\theta_0}F_0 + I_{\theta_0,F_0}(F-F_0) + R(\theta,F),\]
where the remainder can be decomposed as
\begin{align*}
	R(\theta,F)&=K_\theta F- K_{\theta_0}F - (\theta - \theta_0)\dot K_{\theta_0} F
	+ (\theta-\theta_0)(\dot K_{\theta_0}F - \dot K_{\theta_0}F_0) \\
	&\qquad\qquad+ K_{\theta_0}F-K_{\theta_0}F_0 - I_{\theta_0,F_0}(F-F_0).
\end{align*}
By items (v), (iv) and (vi) of Lemma~\ref{lemma: bounds}, there exists a constant $C_2=C_2(R)$ for which,
\[\|R(\theta,F)\|_{H^{2,1}}\leq C_2\left(|\theta-\theta_0|^2 + |\theta-\theta_0|\|F-F_0\|_2 + \|F-F_0\|_2\|F-F_0\|_\infty \right).\] 
Together with  the triangle inequality this implies that $\|K_\theta F-K_{\theta_0}F_0\|_{H^{2,1}}$ is lower bounded
by, whenever $|\theta-\theta_0|+\|F-F_0\|<\mu$, 
\begin{align*}
	&\bigl\|(\theta-\theta_0)\dot K_{\theta_0}F_0 + I_{\theta_0,F_0}(F-F_0)\bigr\|_{H^{2,1}} - C_2|\theta-\theta_0|\mu - C_2\|F-F_0\|_2\|F-F_0\|_\infty \nonumber\\
	&\quad\geq |\theta-\theta_0|\left(\left\|\dot K_{\theta_0}F_0 + I_{\theta_0,F_0}\left[\dfrac{F-F_0}{\theta-\theta_0}\right]\right\|_{2} - C_2\mu\right) - C_2\|F-F_0\|_2\|F-F_0\|_\infty.
\end{align*}
The efficient Fisher information $\tilde i_{\theta_0,F_0}$ is the infimum of 
$\|\dot K_{\theta_0}F_0 + I_{\theta_0,F_0}h\|^2_2$ over $h\in H^\beta(\X)$. It follows that the coefficient of $|\theta-\theta_0|$ in the last display is bounded below by $\tilde i_{\theta_0,F_0}^{1/2} - C_2\mu\ge \tilde i_{\theta_0,F_0}^{1/2}/2$, for sufficiently small $\mu=\mu(R)$.
Furthermore, as seen under (i), by Sobolev embedding and interpolation 
we have $\|F-F_0\|_\infty\lle \|F-F_0\|_2^\nu \|F-F_0\|_{H^\beta}^{1-\nu}$,
for some $\nu\in(0,1)$, which is further bounded above by
$\|F-F_0\|_2^\nu (1+R)^{1-\nu}$ for $F\in B_{H^\beta}(R)$. Thus we can replace the norm $\|F-F_0\|_\infty$
in the preceding display by a constant depending on $R$ times $\|F-F_0\|_2^\nu$.

	\begin{lemma}
		\label{LemmaInversion}
		Fix $R>0$ and set $B_{H^\beta}(R)=\{F\in H^\beta(\X):= \|F\|_{H^\beta(\X)}\le R\}$.
		Assume that the map $(\theta, F)\mapsto K_\theta F$ from $\Theta\times L^2(\X)$ is  injective at $(\theta_0, F_0)$ in the sense
		that $K_\theta F=K_{\theta_0}F_0$ implies $(\theta, F)=(\theta_0,F_0)$ and that $K_{\theta_0}F_0$ is bounded away from zero.
		Then $K_\theta F\to K_{\theta_0}F_0$ in $H^{2,1}(\Q)$ for $(\theta,F)\in \Theta\times B_{H^\beta}(R)$  implies that
		$(\theta,F)\to (\theta_0,F_0)$ in $\Theta\times L^2(\X)$.
	\end{lemma}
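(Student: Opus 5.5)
We argue by compactness and contradiction. Let $(\theta_n,F_n)\in\Theta\times B_{H^\beta}(R)$ be a sequence with $\|K_{\theta_n}F_n-K_{\theta_0}F_0\|_{H^{2,1}}\to 0$; it suffices to show that every subsequence has a further subsequence along which $(\theta_n,F_n)\to(\theta_0,F_0)$ in $\Theta\times L^2(\X)$. Since $\Theta$ is compact and the embedding $H^\beta(\X)\hookrightarrow H^{\beta'}(\X)$ is compact for $\beta'<\beta$ on the torus, we can pass to a subsequence with $\theta_n\to\bar\theta\in\Theta$, $F_n\rightharpoonup\bar F$ weakly in $H^\beta$, and $F_n\to\bar F$ in $H^{\beta'}(\X)$ for some $\beta'\in(d/2,\beta)$. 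The latter gives uniform convergence $F_n\to\bar F$. Hence $\Phi(F_n)\to\Phi(\bar F)$ uniformly, because $\Phi'$ is bounded. Moreover $\|\bar F\|_{H^\beta}\le R$ by weak lower semicontinuity.

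The key step is to identify the limit, namely to show that $K_{\bar\theta}\bar F=K_{\theta_0}F_0$. Write $u_n=K_{\theta_n}F_n$ and $u_*=K_{\theta_0}F_0$, so that $u_n\to u_*$ in $H^{2,1}(\Q)$. I would pass to the limit directly in the equation $\frac{\theta_n}{2}\Delta_x u_n-\del_t u_n-\Phi(F_n)u_n=0$ in $L^2(\Q)$. For the three terms:
\begin{itemize}
\item $\Delta_x u_n\to\Delta_x u_*$ and $\del_t u_n\to\del_t u_*$ in $L^2(\Q)$, by convergence in $H^{2,1}$;
\item $\theta_n\to\bar\theta$;
\item $\Phi(F_n)u_n\to\Phi(\bar F)u_*$ in $L^2(\Q)$, since $\|\Phi(F_n)\|_\infty$ is bounded and $\Phi(F_n)\to\Phi(\bar F)$ uniformly.
\end{itemize}
So $u_*$ solves $\eL_{\bar\theta,\bar F}u_*=0$ with initial condition $u_*(\cdot,0)=u_0$; the initial trace passes to the limit by continuity of the trace on $H^{2,1}$. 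By the uniqueness part of Theorem~\ref{thm: existence}, applicable since $\bar F\in H^\beta$, we get $u_*=u_{\bar\theta,\bar F}=K_{\bar\theta}\bar F$.

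The injectivity hypothesis now yields $(\bar\theta,\bar F)=(\theta_0,F_0)$. Therefore $\theta_n\to\theta_0$ and $F_n\to F_0$ in $H^{\beta'}$, and hence in $L^2(\X)$, along the subsequence. Because the limit is the same for every subsequence, the whole sequence converges, which proves the lemma. The hypothesis that $K_{\theta_0}F_0$ is bounded away from zero is not needed in this argument. If uniqueness were only available within a class of solutions, it would offer an alternative route: recover $\Phi(\bar F)=(\frac{\bar\theta}{2}\Delta_x u_*-\del_t u_*)/u_*$ and compare with the same identity at $(\theta_0,F_0)$, as in the proof of Lemma~\ref{lemma: identifiability}.

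The main obstacle I expect is the passage to the limit in the nonlinear term together with the uniqueness of the limiting solution. The nonlinear term is handled by the compact embedding into $C(\X)$, which requires $\beta>d/2$. For uniqueness one must check that $u_*\in H^{2,1}$ is indeed the strong solution singled out by Theorem~\ref{thm: existence}. If that theorem only gives uniqueness in $H^{2+\beta,1+\beta/2}$, I would instead invoke uniqueness in $H^{2,1}$ for the linear parabolic problem with bounded coefficient $\Phi(\bar F)$. This follows from the isomorphism property of $\eL_{\bar\theta,\bar F}$ from $H^{2,1}_{B,0}(\Q)$ onto $L^2(\Q)$, established before Lemma~\ref{lemma: devs}, applied to the difference of two solutions.
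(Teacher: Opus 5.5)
Your proof is correct, but it takes a different route from the paper's.

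The paper uses compactness only for $\Theta$, extracting $\theta_m\to\theta_1$. It obtains convergence of $F_m$ constructively. Using that $K_{\theta_0}F_0$ is bounded away from zero, it inverts the equation as $\Phi(F_m)=\bigl(\tfrac{\theta_m}{2}\Delta_x u_m-\del_t u_m\bigr)/u_m$. Then $H^{2,1}$-convergence of $u_m$ gives $L^2$-convergence of $\Phi(F_m)$, hence of $F_m$, to an explicit limit $F_1$. It identifies $K_{\theta_1}F_1=K_{\theta_0}F_0$ through the forward Lipschitz bounds (i) and (iii) of Lemma~\ref{lemma: bounds}, not through uniqueness for a limiting PDE, and concludes by injectivity.

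You instead get strong convergence of $F_m$ in $C(\X)$ from Rellich compactness of $B_{H^\beta}(R)$ in $H^{\beta'}(\X)$. You then pass to the limit in $\eL_{\theta_m,F_m}u_m=0$ and identify the limit by uniqueness of the solution with initial value $u_0$. Your uniqueness step is sound: $u_*=K_{\theta_0}F_0$ already lies in $H^{2+\beta,1+\beta/2}(\Q)$, and your fallback via the isomorphism of $\eL_{\bar\theta,\bar F}$ on $H^{2,1}_{B,0}(\Q)$ also works.

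Your route buys two things:
\begin{enumerate}
\item It dispenses with the hypothesis that $K_{\theta_0}F_0$ is bounded away from zero.
\item It never needs uniform convergence $u_m\to u_*$. The paper asserts this as a consequence of $H^{2,1}(\Q)$-convergence, but $H^{2,1}(\Q)\subset C(\overline{\Q})$ holds only for $d=1$. For $d=2,3$ one must add the uniform $H^{2+\beta,1+\beta/2}$ bound of Theorem~\ref{thm: existence} and interpolate.
\end{enumerate}

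The paper's route, in turn, produces the limit through the same explicit reconstruction formula as Lemma~\ref{lemma: identifiability} and the stability bound in item (i) of the proof of \eqref{eq: stab_neighborhood}. That form can be made quantitative, whereas your compactness argument is purely qualitative, which is enough for this lemma.
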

	
	\begin{proof}
		Take any sequence $(\theta_m,F_m)\in \Theta\times B_{H^\beta}(R)$  with $\theta_m\to \theta_1$ for some $\theta_1$
		and $K_{\theta_m}F_m\to u:=K_{\theta_0}F_0$ in $H^{2,1}(\Q)$. 
		%Because $\|K_{\theta_m}F_m-K_{\theta_1}F_m\|_{H^{2,1}}\lesssim (1+R)^2 |\theta_m-\theta_1|$, by Lemma~\ref{lemma: bounds}, we have that $K_{\theta_1}F_m\to u_0$ in $H^{2,1}(\Q)$. 
		This implies that $K_{\theta_m}F_m\to u$ also for the uniform norm and hence
		$K_{\theta_m}F_m$ is bounded away from zero, for sufficiently large $m$. Since $u_{\theta_m,F_m}$ satisfies \eqref{eq:PDE},
		we can solve $F_m=-(\partial_t -\theta_m\Delta_x K_{\theta_m}F_m/2)/K_{\theta_m}F_m$, which converges
		in $L_2(\X)$ to $F_1:=-(\partial_t -\theta_m\Delta_x K_{\theta_m}u/2)/u$. Then items (iii) and (i) of Lemma~\ref{lemma: bounds}
		give that $K_{\theta_m}F_m\to K_{\theta_1}F_1$ in $L_2(\Q)$. Hence $K_{\theta_1}F_1=K_{\theta_0}F_0$ and $(\theta_1,F_1)=(\theta_0,F_0)$
		by the assumed injectivity. 
		
		Because $\Theta$ is compact, any subsequence of a sequence $(\theta_m,F_m)$ possesses a further subsequence along which
		$\theta_m$ converges to a limit. Applying the preceding argument to this subsequence we see that $\theta_0$ is the only
		possible limit point and the proof is complete.
	\end{proof}

\subsection{Proof of Lemma \ref{lemma: lipschitz}}
By the triangle inequality 
$$\|K_{\theta_1}F_1-K_{\theta_2}F_2\|_2\leq \|(K_{\theta_1}-K_{\theta_2})F_1\|_2 + \|K_{\theta_2}F_1-K_{\theta_2}F_2\|_2.$$
By item (iii) of Lemma~\ref{lemma: bounds}, the first term on the right is bounded above by
$D_3(1+\|F_1\|_\infty)^2|\theta_1-\theta_2|$. 
Because $\beta>2+d/2>d/2$, the space  $H^\beta(\X)$  is continuously embedded in of continuous functions $C(\X)$
and hence $\|F_1\|_\infty\lesssim\|F_1\|_{H^\beta}$
remains uniformly bounded on $B_{H^\beta}(R)$.
By a slight adaptation of Proposition~9 in \cite{kekkonen}, the second term on the right is bounded above by a multiple of
$(1+\|F_1\|^4_{C^2(\X)} \vee \|F_2\|^4_{C^2(\X)})\|F_1-F_2\|_{(H^2(\X))^*}$. 
Since $\beta>2+d/2$, the space $H^\beta(\X)$ is continuously embedded in $C^2(\X)$, and hence again the constant remains uniformly
bounded over balls of radius $R$.

\section{Proofs of the LAN Expansion and the Information Theorem (Section \ref{SectionLAN})}

\label{sec:proofInfo}

\subsection{Proof of Lemma \ref{lemma: devs}}

If it exists, the derivative $\dot K_\theta F$ of $\tau\mapsto K_\tau F$ at $\theta$ is the limit of the ratio $(K_{\theta+s}F - K_\theta F)/s$ as $s\to 0$. Observe that $K_{\theta+s} F - K_\theta F$ vanishes on the parabolic boundary. We have that,
\begin{align*}
	\eL_{\theta,F} (K_{\theta+s} F - K_\theta F) &= \eL_{\theta, F}K_{\theta+s}F - 0\\
	&= (\eL_{\theta,F} - \eL_{\theta+s,F})K_{\theta+s} F= -\frac{s}{2}\Delta_x K_{\theta+s}F.
\end{align*}
As we already mentioned, $\eL_{\theta,F}:H_{B,0}^{2,1}(\Q)\to L^2(\Q)$ is an isomorphism with continuous inverse $\eL_{\theta,F}^{-1}$. We thus have,
\begin{align*}
	\frac{K_{\theta+s} F - K_\theta F}{s} = \eL_{\theta, F}^{-1}\Bigl(-\frac{\Delta_x K_{\theta+s}F}{2}\Bigr).
\end{align*}
Using Lemma \ref{lemma:firstEstimate} and the fact that $\Delta_x:H^{2,1}_{B,0}(\Q)\to L^2(\Q)$ is continuous, we further get that
\begin{align*}
	\left\| \eL_{\theta,F}^{-1}\left(\frac{\Delta_x(K_\theta F - K_{\theta+s}F)}{2}\right)\right\|_2\leq \frac{C}{2}\|\Delta_x(K_\theta F - K_{\theta+s}F)\|_2\lle \|K_{\theta}F-K_{\theta+s}F\|_{H^{2,1}},
\end{align*}
and the right hand side goes to 0 as $s\downarrow 0$ by virtue of item (iii) in Lemma \ref{lemma: bounds}. Putting everything together thus yields

\begin{align*}
	\frac{K_{\theta+s} F - K_\theta F}{s} = \eL_{\theta, F}^{-1}\Bigl(-\frac{\Delta_x K_{\theta+s}F}{2}\Bigr)\to \eL_{\theta,F}^{-1}\Bigl(-\frac{\Delta_x K_\theta F}{2}\Bigr), \quad\text{ as } s\to 0.
\end{align*}

We now move on to the proof of the second part of the lemma. By assumption $\Phi$ satisfies, for all $x,s \in\mathbb{R}$,
\begin{align}\label{eq: PhiCondition}
	\left| \Phi(x+s) - \Phi(x)\right|\lesssim |s|, \quad \left|\Phi(x+s)-\Phi(x)-s\Phi'(x)\right|\lesssim |s|^2.
\end{align}
Furthermore, as seen in Section \ref{sec: classicalSol}, since $\Theta \subset (0,\infty)$ is compact and since  $\Phi(F) > 0$ for all $F\in H$, we also have that 
\begin{align}\label{eq: uniformBound}
	\sup_{(\theta,F)\in \Theta\times H} \|K_\theta F\|_\infty \leq \|u_0\|_\infty =: U
\end{align}
Observe then that $K_\theta (F+h) - K_\theta F$ vanishes on the parabolic boundary, and so does $I_{\theta,F}h$ (as it is in the image of $\eL_{\theta, F}^{-1}$). It follows that their difference vanishes on the parabolic boundary as well. Now,
\begin{align*}
	&\eL_{\theta, F+h}(K_\theta (F+h) - K_\theta F - I_{\theta,F}h) = \eL_{\theta, F+h}(0 - K_\theta F - I_{\theta,F}h)\\
	&\qquad= (\eL_{\theta, F} - \eL_{\theta, F+h})(K_\theta F + I_{\theta,F}h) - h\Phi'(F) K_\theta F\\
	&\qquad= \left(\Phi(F+h) - \Phi(F) - h\Phi'(F)\right)K_\theta F + (\Phi(F+h)-\Phi(F))I_{\theta,F}h.
\end{align*}
It follows that $\|K_\theta (F+h) - K_\theta F - I_{\theta,F}h\|_2$ is equal to
\begin{align*}
	\Bigl\|&\eL_{\theta, F+h}^{-1}\bigl[\bigl(\Phi(F+h) - \Phi(F) - h\Phi'(F)\bigr)K_\theta F 
	+ (\Phi(F+h)-\Phi(F))I_{\theta,F}h\bigr]\Bigr\|_2\\
	&\leq C\left\| \bigl(\Phi(F+h) - \Phi(F) - h\Phi'(F)\bigr)K_\theta F
	+ \bigl(\Phi(F+h)-\Phi(F)\bigr)I_{\theta,F}h\right\|_2 \quad\text{ (by \eqref{eq: LipsEst1})}\\
	&\leq C\|K_\theta F\|_\infty\|h\|_\infty \|h\|_2 + C \|h\|_\infty\|I_{\theta,F}h\|_2 \qquad\text{ (by \eqref{eq: PhiCondition})}\\
	&=  C\|K_\theta F\|_\infty\|h\|_\infty \|h\|_2 + C \|h\|_\infty\|\eL_{\theta,F}^{-1} (h\Phi'(F)K_\theta F)\|_2\\
	&\leq CU\left(\|h\|_\infty\|h\|_2 + \|h\|_\infty \|h\|_2\|\Phi'(F)\|_\infty\right) \qquad\text{ (by \eqref{eq: LipsEst1} and \eqref{eq: uniformBound}) }\\
	&\lesssim \|h\|_\infty\|h\|_2= o(\|h\|_\infty),
\end{align*}
where we repeatedly used the fact that $\|uv\|_2\leq \|u\|_\infty\|v\|_2$, for $u\in L^\infty(\Q), v\in L^2(\Q)$.

We finally show continuity of the map $\theta\mapsto I_{\theta,F}h$. Observe that 
\[\eL_{\theta+s, F} (I_{\theta+s,F}h-I_{\theta,F}h) = \Phi'(F)h(K_{\theta+s}F-K_\theta F) + (\eL_{\theta,F}[I_{\theta,F}h] - \eL_{\theta+s, F}[I_{\theta,F}h]).\]
By \eqref{eq: LipsEst1} in Lemma \ref{lemma:firstEstimate}, we can thus bound $\|I_{\theta+s,F}h-I_{\theta,F}h\|_2$ by a constant multiple of the $L^2(\Q)$-norm of the two terms on the right hand side of the above equality. The $L^2$-norm of the first term goes to 0 as $s\downarrow 0$ by virtue of item (iii) in Lemma \ref{lemma: bounds}. Besides, item (vii) of Lemma \ref{lemma: bounds} gives us that $I_{\theta,F}h\in H^{2,1}(\Q)$. The $L^2$-norm of the second term then goes to 0 as $s\downarrow 0$ because $\theta\mapsto \eL_{\theta,F}u$ is continuous for all $u\in H^{2,1}(\Q)$, since $\|\eL_{\tau,F}u-\eL_{\theta,F}u\|_2 = \frac{1}{2}|\theta-\tau|\|\Delta_x u\|_2$. 

\subsection{Proof of Lemma \ref{lemma: LAN} (LAN Expansion)}

By Lemma~\ref{lemma: devs}, for any $a\in\mathbb{R}$, as $s\downarrow 0$,
\begin{align}\label{eq: limitDev}
	\begin{split}
		\frac{K_{\theta+sa} (F+saG) - K_\theta F}{s} &= \frac{K_{\theta+sa}F-K_\theta F}{s}+ \frac{K_{\theta+sa}(F+saG) - K_{\theta+sa} F}{s} \\
		&\to a\dot K_\theta F + aI_{\theta,F} G,
	\end{split}
\end{align}
where the limit is in $L^2(\Q)$. For the first term, this is a direct consequence of the first part of Lemma \ref{lemma: devs}, while for the second term this follows from the proof of its second part combined with continuity of $\theta\mapsto I_{\theta,F}G$ since
\begin{align*}
	&\left\| \frac{K_{\theta+sa}(F+saG) - K_{\theta+sa} F}{s} - aI_{\theta,F}G\right\|_2\\
	&= \left\| \frac{K_{\theta+sa}(F+saG) - K_{\theta+sa} F}{s} - aI_{\theta+sa,F}G+aI_{\theta+sa,F}G-aI_{\theta,F}G\right\|_2\\
	&\lle sa^2\|G\|_\infty\|G\|_2+a\|I_{\theta+sa,F}G-I_{\theta,F}G\|_2\\&\to 0 \;\;\; \text{as }s\to 0.
\end{align*}

Applying formula \eqref{eq: loglikelihood} twice then yields for $(\tau,G)\in\Theta\times H$,
with $X^n=K_\theta F+n^{-1/2}\dot W$,
\begin{align*}
	\begin{split}
		\log{\frac{dP_{\tau,G}^n}{dP_{\theta,F}^n}} =\sqrt{n}\ip{K_\tau G - K_\theta F}{\dot W}_2 - \frac{n}{2}\|K_\tau G-K_\theta F\|_2^2.
	\end{split}
\end{align*}
It follows that the left hand side of \eqref{eq: LANexp} is equal to,
\begin{align*}
	\Bigip{\frac{K_{\theta+s/\sqrt{n}} (F+sG/\sqrt{n}) - K_\theta F}{1/\sqrt{n}}}{\dot W}_2 - \frac{1}{2}\Bigl\| \frac{K_{\theta+s/\sqrt{n}} (F+sG/\sqrt{n}) - K_\theta F}{1/\sqrt{n}}\Bigr\|_2^2
\end{align*}
Taking $(a,s)$ in \eqref{eq: limitDev} equal to  the present $(s,1/\sqrt{n})$ shows that the limit of the above display as $n\to\infty$ is equal to the right hand side of \eqref{eq: LANexp}.

Minimising the information $G\mapsto \|\dot K_\theta F + I_{\theta, F}G\|^2$ is to minimize the $L^2$-distance of $-\dot K_\theta F$ to
the (closure of the) range of $I_{\theta,F}$. The minimiser is the orthogonal projection  of $-\dot K_\theta F$ onto the latter space.
If there exists $h\in H$ such that $I_{\theta, F}^*\dot K_\theta F = I_{\theta,F}^*I_{\theta,F}h$, then this projection 
takes the form $  I_{\theta,F}h$, since in this case, for every $G$,
$$\langle \dot K_\theta F-I_{\theta,F}h, I_{\theta,F}G\rangle_{L^2(\Q)}=
\langle I_{\theta,F}^* \dot K_\theta F-I_{\theta,F}h, G\rangle_{L^2(\Q)}=\langle 0,G\rangle_{L^2(\Q)}=0.$$
It follows that the efficient Fisher information is $\tilde i_{\theta,F}$ as in the statement of the lemma.

Provided $\dot K_\theta F \not= 0$, the efficient Fisher information is zero only if $\dot K_{\theta}F=I_{\theta,F}\gamma_{\theta,F}$.
From the expressions for $\dot K_\theta F$ and $I_{\theta,F}\gamma_{\theta,F}$ in Lemma~\ref{lemma: devs},
we see that this equality is equivalent to
\[ \gamma_{\theta,F} = \frac{\Delta_x K_\theta F}{2\Phi'(F) K_\theta F}
=\frac{1}{\theta} \left( \frac{\partial_t K_\theta F}{\Phi'(F)K_\theta F} + \frac{\Phi(F)}{\Phi'(F)}\right), \]
since $\eL_{\theta,F}K_{\theta}F=0$.
Because  $\gamma_{\theta,F}$ and $F$ are functions of only the space variable and not time, it follows that
the function $\partial_t K_\theta F / K_\theta F = \partial_t \log K_\theta F$ does not depend on time.
This is excluded by the assumption in the lemma that $\partial_t^2 \log{K_\theta F} \neq 0$. Finally, it is also impossible that $\dot K_\theta F = 0$ under this assumption. If that is the case, i.e. if $\eL_{\theta,F}^{-1}(\Delta_x K_\theta F) = 0$, then $\Delta_x K_\theta F=0$, and since $\eL_{\theta,F}K_\theta F = 0$, this further implies that 
\[\del_t K_\theta F = - \Phi(F)K_\theta F \implies \del_t\log{K_\theta F} = -\Phi(F),\]
which would give again that $\del_t\log{K_\theta F}$ does not depend on time, contradicting the assumption.

\subsection{Proof of Theorem \ref{thm:information}}

\subsubsection{Proof of (i)}

Suppose by contradiction that $\del_t^2\log K_{\theta}F =0$. Then for some functions  $A:\X\to\RR$ and $B:\X\to\RR$, we can write $K_{\theta}F(x,t) = B(x)e^{tA(x)}$. It follows that 
\begin{align*}
	\Delta K_\theta F &= \nabla\cdot(e^{At}(\nabla B + tB\nabla A))\\
	&=e^{At}\Bigl( \Delta B + 2t\nabla B\cdot\nabla A  + tB\Delta A  + t^2B|\nabla A|^2 \Bigr)\\
	&= K_\theta F \Bigl(  \frac{\Delta B}{B} + 2t \frac{\nabla B\cdot\nabla A}{B} + t\Delta A + t^2|\nabla A|^2\Bigr),
\end{align*}
where we have used that since  $K_\theta F>0$ (by assumption on $u_0$ being positive), then also $B(x)>0$. Using the previous display and the fact that $\del_t K_\theta F = AK_\theta F$, we can thus divide by $K_\theta F$ on both sides of the equation $\eL_{\theta, F}K_\theta F =0$ to obtain that identically on $\Q$, 
\[A - \frac{\theta}{2}\Bigl( \frac{\Delta B}{B} + 2t \frac{\nabla B\cdot\nabla A}{B} + t\Delta A + t^2|\nabla A|^2\Bigr) + \Phi(F) = 0.\] 
Since $A$ and $\Phi(F)$ depend exclusively on the space variable, we must have that the $t$-polynomial in parentheses is constant (w.r.t time). In particular, the $t^2$-coefficient must be zero. This implies that $\nabla A = 0\implies A=\lambda,$ for some $\lambda\in\RR$. It follows that $K_\theta F(x,t) = B(x)e^{\lambda t}$, and since $u_0(x)=K_\theta F(x,0)=B(x)$, we can then write $K_\theta F(x,t)=u_0(x)e^{\lambda t}$. The equation $\eL_{\theta,F}K_\theta F=0$ is then equivalent to 
\[\del_t (u_0e^{\lambda t}) = \SSS_{\theta,F} (u_0e^{\lambda t}) \iff e^{\lambda t}\lambda u_0  = e^{\lambda t}\SSS_{\theta,F}(u_0)\iff\lambda u_0 = \SSS_{\theta,F}(u_0). \]
This precisely means that $u_0$ lies in an eigenspace of $\SSS_{\theta,F}$, which contradicts the assumption and concludes the proof.

\subsubsection{Proof of (ii)}

We have seen in Section \ref{SectionLAN} that $\eL_{\theta,F}:H_{B,0}^{2,1}(\Q)\to L^2(\Q)$ is an isomorphism. Its adjoint $\eL_{\theta,F}^*:H_{C,0}^{2,1}(\Q)\to L^2(\Q)$ is $\eL_{\theta,F}^*= -\partial_t - \frac{\theta}{2}\Delta_x + \Phi(F)$, and we have the identity $(\eL_{\theta,F}^{-1})^* = (\eL_{\theta,F}^*)^{-1}$. It is then straightforward to check that $I_{\theta,F}^*: L^2(\Q)\to L^2(\X)$
has the following expression, for $u\in L^2(\Q)$:
\begin{align*}
	I_{\theta,F}^*u(\cdot) = \Phi'(F)\int_0^T K_{\theta}F(\cdot,t)\,\bigl((\eL_{\theta,F}^*)^{-1}u\bigr)(\cdot, t)\,dt.
\end{align*}

We begin by showing that $I^*_{\theta,F}\dot K_\theta F\in H^\xi(\X)$. We first recall that $\dot K_\theta F = \eL_{\theta,F}^{-1}(\Delta_x K_\theta F/2)$. By assumption, $\xi>2+d/2$. Hence, by Theorem \ref{thm: existence}, we have that $K_\theta F\in H^{2+\xi, 1+\xi/2}(\Q)$. Furthermore, by Proposition 2.3 in \cite{lionsVol2} applied with $r=2+\xi, s=1+\xi/2, j= 2$ and $k=0$, we have that $\Delta_x K_\theta F\in H^{\xi, \xi/2}(\Q)$. Item (iii) of Lemma \ref{lemma: refinedLipschitz} then implies that $\dot K_\theta F\in H^{2+\xi, 1+\xi/2}(\Q)$. It follows that $I^*_{\theta,F}\dot K_\theta F\in H^\xi(\X)$ provided we can show that $I^*_{\theta,F}$ maps $H^{2+\xi,1+\xi/2}(\Q)$ into $H^\xi(\X)$. To this end, we observe that the statements of Lemma \ref{lemma: refinedLipschitz} also hold when replacing $\eL_{\theta,F}^{-1}$ with $(\eL_{\theta,F}^*)^{-1}$ (it simply suffices to replace $L_\theta$ in its proof by $L^*_\theta:=\frac{\theta}{2}\Delta_x + \del_t$, and the result follows). We thus can apply item (iii) again to show that if $u\in H^{2+\xi, 1+\xi/2}(\Q)\subset H^{\xi,\xi/2}(\Q)$, then certainly $(\eL_{\theta,F}^*)^{-1}u\in H^{2+\xi,1+\xi/2}(\Q)$. It follows that the integrand in the expression of $I^*_{\theta,F}u$ in the last display above is in $H^{2+\xi, 1+\xi/2}(\Q)$, so the integral (as a function on $\X$) is certainly in $H^\xi(\X)$. Finally, since  $\xi>d/2$, we have that $\Phi'(F)\in H^\xi(\X)$, and thus also (by \eqref{EqSobolevProductOne}) that $I^*_{\theta,F}\dot K_\theta F\in H^\xi(\X)$. 
%The reason that $\Phi'(F)\in H^\xi(F)$ is that  $F\in H^\xi(\X)\cap L^\infty(\X)$  (because \xi>d/2) and $\Phi'$ is smooth with bounded devs. So Theorem 2.87 in "Fourier Analysis and Nonlinear Partial Differential Equations", Bahouri et.al, 2011, applies.

We now prove that $\II:H^\alpha(\X)\to H^\alpha(\X)$ is an isomorphism. We will proceed by a Fredholm argument; first showing that $\II$ is injective, and then showing that it can be written as $\II = \JJ + R$, for $\JJ:H^\alpha(\X)\to H^\alpha(\X)$ an isomorphism and $R:H^\alpha(\X)\to H^{(\xi-4)\wedge (\alpha+1)}(\X)$ compact. The result will then follow by the Fredholm alternative (Theorem VI.6 in \cite{brezis1983}). 
\smallskip

\textit{Injectivity.} We will show that $I^*_{\theta,F}I_{\theta,F}:H^{\alpha}(\X)\to H^{\alpha+4}(\X)$ is injective. Injectivity of $\SSS_{\theta,F}:H^{s}(\X)\to H^{s+2}(\X)$ for $s\geq 1$ can then be applied twice (with $s=\alpha+4$ first and with $s=\alpha+2$ second) to conclude that $\II:H^\alpha(\X)\to H^\alpha(\X)$ is indeed injective. 

Recall that $I_{\theta,F}h = \eL_{\theta,F}^{-1}(h\Phi'(F)K_\theta F)\in H^{2,1}_{B,0}(\Q)$. Assume now that $I_{\theta,F}h = 0$ for some $h\in H^\alpha(\X)$. Using that $\eL_{\theta,F}:H_{B,0}^{2,1}(\Q)\to L^2(\Q)$ is an isomorphism, there exists some constant $C_{\theta,F}>0$ such that $\|\eL_{\theta,F}u\|_2\leq C_{\theta,F}\|u\|_{H^{2,1}(\Q)}$ for $u\in H^{2,1}_{B,0}(\Q)$. Therefore, we have
\[\|h\Phi'(F)K_\theta F\|_2 = \Bigl\|\eL_{\theta,F}\Bigl[I_{\theta,F}h\Bigr]\Bigr\|_2\leq C_{\theta,F}\|I_{\theta,F}h\|_{H^{2,1}} = 0. \]
By assumption $\Phi'(F)K_\theta F >0$, and thus we obtain from the last display that $h=0$. This shows injectivity of $I_{\theta,F}$. Let then $h\in H^\alpha(\X)$ be such that $I_{\theta,F}^*I_{\theta,F}h=0$. Then  $\ip{I^*_{\theta,F}I_{\theta,F}h}{h}_2 = 0$, which is equivalent to $\|I_{\theta,F}h\|^2_2=0$. Injectivity of $I_{\theta,F}$ then implies that $h=0$, which shows that $I^*_{\theta,F}I_{\theta,F}$ is injective on $H^\alpha(\X)$. 

It remains to show that it maps $H^\alpha(\X)$ into $H^{\alpha+4}(\X)$. To this end, observe that since $\xi>d/2,\ \Phi'(F)\in H^\xi(\X)$ and $K_\theta F\in H^{2+\xi, 1+\xi/2}(\Q)$, we have by \eqref{EqSobolevProductOne} that $h\Phi'(F)K_\theta F\in H^{\xi,\xi/2}(\Q)\subset H^{\alpha, \alpha/2}(\Q)$. Because $F\in H^\xi(\X)\subset H^\alpha(\X)$, we can apply \eqref{eq:refinedH} in Lemma \ref{lemma: refinedLipschitz} to obtain that $I_{\theta,F}h\in H^{2+\alpha, 1+\alpha/2}(\Q)$. Furthermore, since $F\in H^\xi(\X)\subset H^{\alpha+2}(\X)$, we can apply \eqref{eq:refinedH} a second time (this time with $(\eL_{\theta,F}^*)^{-1}$ which we explained was valid above) to obtain that $(\eL_{\theta,F}^*)^{-1}[I_{\theta,F}h]\in H^{4+\alpha, 2+\alpha/2}(\Q)$. As $K_\theta F\in H^{\xi+2, 1+\xi/2}(\Q)\subset H^{4+\alpha, 2+\alpha/2}(\Q)$, we apply \eqref{EqSobolevProductOne} again to show that $K_\theta F(\eL_{\theta,F}^*)^{-1}[I_{\theta,F}h]\in H^{4+\alpha, 2+\alpha/2}(\Q)$. Integrating this with respect to time then yields a function in $H^{\alpha+4}(\X)$, which multiplied by $\Phi'(F)\in H^{\xi}(\X)\subset H^{\alpha+4}(\X)$ yields a function in $H^{\alpha+4}(\X)$ again by \eqref{EqSobolevProductOne}. It follows that for $h\in H^\alpha(\X),$
\[I_{\theta,F}^*I_{\theta,F}h = \Phi'(F)\int_0^T K_{\theta}F(\cdot,t)\,\bigl((\eL_{\theta,F}^*)^{-1}\bigl[I_{\theta,F}h\bigr]\bigr)(\cdot, t)\,dt\in H^{\alpha+4}(\X).\]
\smallskip

\textit{Fredholm Decomposition.} Since $\theta$ and $F$ are fixed here, we will denote $\eL=\eL_{\theta,F}$ and $\SSS=\SSS_{\theta,F}$ to ease notation. For any $h\in H^\alpha(\X)$, we also define the following functions:
 \begin{align*}
	g&:=\Phi'(F)K_\theta F,\\
	H_h&:=(\eL)^{-1}[hg],\\
	G_h&:=(\eL^*)^{-1}\bigl[H_h].
\end{align*}
It follows that we have
\begin{align*}
	\II h = \SSS^2\Bigl[\int_0^Tg(t)G_h(t)dt\Bigr] 
	&= \int_0^T\SSS\Bigl[\SSS[g(t)G_h(t)]\Bigr]dt
\end{align*}
Note further that for two functions $a$ and $b$, we have 
\[\SSS[ab] = a\SSS[b] + \frac{\theta}{2}b\Delta a + \theta\nabla a\cdot\nabla b.\]
Successive application of this identity yields
\begin{align}
	\II h 	 &= \int_0^T \SSS\Bigl[g(t)\SSS[G_h(t)] + \frac{\theta}{2}G_h(t)\Delta g(t) + \theta \nabla g(t)\cdot\nabla G_h(t) \Bigr]dt\nonumber\\
	&= \int_0^Tg(t)\SSS^2[G_h(t)]dt + \frac{\theta^2}{4}\int_0^T G_h(t)\Delta^2[g(t)]dt + R_1 h,\label{eq:II} 
\end{align}
with 
\begin{align*}
	R_1h = \frac{\theta}{2}\int_0^T &2\Delta g(t) \SSS[G_h(t)] + 2\nabla \SSS[G_h(t)]\cdot \nabla g(t)\\
	&+\theta\nabla(\Delta g(t))\cdot \nabla G_h(t) + 2\SSS\Bigl[ \nabla g(t)\cdot\nabla G_h(t)\Bigr] dt 
\end{align*}
Now, the arguments used in the proof of injectivity of $\II$ give us that 
\begin{align*}
	g&\in H^{\xi, \xi/2}(\Q),\\
	G_h&\in H^{4+\alpha, 2+\alpha/2}(\Q).
\end{align*}
In combination with Proposition 2.3 in Chapter 4 of \cite{lionsVol2} repeatedly applied for $\nabla$ and $\Delta$, this yields to the following memberships: 
\begin{align*}
	\nabla g &\in H^{\xi-1,\xi/2-1/2}(\Q), \\ \Delta g &\in H^{\xi-2, \xi/2-1}(\Q), \\ \nabla(\Delta g)&\in H^{\xi-3, \xi/2 -3/2}(\Q),\\
	\nabla G_h &\in H^{\alpha+3,\alpha/2+3/2}(\Q),\\
	\Delta G_h&\in H^{\alpha+2, \alpha+1}(\Q),\\
	\nabla (\Delta G_h)&\in H^{\alpha+1,\alpha+1/2}(\Q).
\end{align*}
It follows using \eqref{EqSobolevProductOne} and that $\xi>\alpha+4$, that the integrand in the expression for $R_1h$ is in $H^{\alpha+1,\alpha/2+1/2}(\Q)$, from which we conclude that $R_1h\in H^{\alpha+1}(\X)$. Besides, again by Proposition 2.3 in Chapter 4 of \cite{lionsVol2}, we have that  $\Delta^2 g\in H^{\xi-4,\xi/2-2}(\Q)$. It follows that the second term in \eqref{eq:II} is in $H^{\xi-4}(\X)$ which is compactly embedded in $H^\alpha(\X)$ by assumption. We therefore have that $R_1'$ defined below
\[R'_1 h := \frac{\theta^2}{4}\int_0^T G_h(t)\Delta^2[g(t)]dt + R_1 h, \]
is a compact operator mapping  $H^{\alpha(\X)}$ into $H^{(\alpha+1)\wedge(\xi-4)}(\X)$. We now deal with the first term in \eqref{eq:II}. Observe firstly that
\begin{align*}
	g\SSS^2[G_h]&= -g\SSS[(\eL^* + \del_t)(\eL^*)^{-1}[H_h]]\\
	&= -g\SSS[H_h] - \del_t\SSS[G_h]\\
	&= g(\eL -\del_t)(\eL)^{-1}[hg] -\del_t \SSS[G_h]\\
	&=g^2h - g\del_t \left(H_h + \SSS[G_h]\right).
\end{align*}
Note then that since $G_h\in H^{2,1}_{C,0}(\Q)$ and $H_h\in H^{2,1}_{B,0}(\Q)$, we have $G_h(T)=H_h(0)=0$. Integrating the above expression using integration by parts therefore yields
\begin{align*}
	 h\int_0^{T}g^2(t)dt + g(0)\SSS[G_h](0) - g(T)H_h(T) + \int_0^T (\del_t g)\left(H_h + \SSS[G_h]\right)(t)dt.
\end{align*}
Letting $m:x\mapsto \int_0^Tg(x,t)^2dt$, we therefore obtain that $\II h = \JJ h + R_1'h + R_2h$ for
\begin{align*}
	\JJ h &= mh, \\
	R_2 h &= g(0)\SSS[G_h](0) - g(T)H_h(T) + \int_0^T (\del_t g)\left(H_h + \SSS[G_h]\right)(t)dt.
\end{align*}
Since $g\in H^{\xi,\xi/2}(\Q)$ is positive, we obtain that $m\in H^\xi(\X)\subset H^\alpha(\X)$ is also positive. As $\alpha>d/2,$ Sobolev embedding imply that $m$ is a continuous bounded function on $\X$. As $\X$ is compact, positivity implies that $m$ is also bounded away from zero. It follows that $\JJ:H^\alpha(\X)\to H^\alpha(\X)$ is  an isomorphism (with inverse $h\mapsto m^{-1}h$). %The reason that $m^{-1}\in H^\alpha(\X)$ is that  $m\in H^\alpha(\X)\cap L^\infty(\X)$  (because \alpha>d/2) and $x\mapsto x^{-1}$ is a smooth function with bounded devs on [\inf{m}_{x\in\X}, \infty). So Theorem 2.87 in "Fourier Analysis and Nonlinear Partial Differential Equations", Bahouri et.al, 2011, applies.
 Furthermore, in view of what preceded, we already know that both $H_h$ and $\SSS[G_h]$ are in $ H^{2+\alpha, 1+\alpha/2}(\Q)$. Proposition 2.3 in Chapter 4 of \cite{lionsVol2} also yields $\del_t g \in H^{\xi-2,\xi/2-1}(\Q)\subset H^{2+\alpha, 1+\alpha/2}(\Q)$. By \eqref{EqSobolevProductOne}, the integral in the expression for $R_2h$ as a function on $\X$ is thus in $H^{\alpha+2}(\X)$. Finally, by Theorem 4 in \S5.9.2 in \cite{EvansPDE}, we have that both $H_h(T)$ and $\SSS[G_h](0)$ are in $H^{\alpha+1}(\X)$. By the same result, both $g(0)$ and $g(T)$ are in $H^{\xi-1}(\X)\subset H^{\alpha+1}(\X)$. It follows that $R_2:H^{\alpha}(\X)\to H^{\alpha+1}(\X)$ is compact. Taking $R=R_1'+R_2$ therefore concludes the proof.

\section{PDE Results}

\label{sec:PDE}

We first give the proof of the existence theorem (Theorem \ref{thm: existence}) and subsequently present various results used throughout the paper with their proofs. 

\subsection{Proof of Theorem \ref{thm: existence}}

Since $f\in H^\beta(\X)$, we can take a sequence $f_n \in C^\infty(\X)$ that converges to $f$ in $H^\beta(\X)$-norm. Since $\beta>2+d/2>d/2$, we have by Sobolev embedding that $f_n$ also converges to $f$ uniformly. As $f\geq f_{\min}>0$, we thus have that for all $n$ large enough, $f_n\geq f_{\min}/2>0$. We can therefore apply Lemma \ref{lemma: boundedSolution} to infer a sequence of solutions $\{u_n\}\subset H^{2+\beta, 1+\beta/2}(\Q)$ to the PDEs \eqref{eq:PDETorus} with $f$ replaced by $f_n$. Furthermore, the solutions satisfy
\begin{align}\label{eq: estimateSequence}
	\|u_n\|_{H^{2+\beta, 1+\beta/2}} \leq C(1+\|f_n\|_{H^{\beta}}^{1+\beta/2}).
\end{align} 
for some $C>0$. Since $f_n\to f$ in $H^\beta(\X)$, this implies that $\{u_n\}$ is uniformly bounded in $H^{2+\beta, 1+\beta/2}(\Q)$. By Banach-Alaoglu, there exists a sub-sequence $u_{n_k}$ that converges weakly to a limit $u\in H^{2+\beta, 1+\beta/2}(\Q)$. By Aubins-Lions Lemma, we can assume that $u_{n_k}$ converges strongly to $u$ in $H^{\beta,\beta/2}(\Q)$.

We now show that $u$ is a weak solution to the original PDE \eqref{eq:PDETorus}. Note that since $u_n$ is a strong solution to the PDE with $f$ replaced by $f_n$, it is also a weak solution. This means that for all test functions $\varphi\in C_c^\infty(\Q)$, we have that
\[\int_\Q -u_n\del_t\varphi -\frac{\theta}{2}\nabla u_n\cdot \nabla \varphi +f_nu_n\varphi=0,\]
where $C_c^\infty(\Q)$ are the smooth functions with compact support on $\Q$.
As $n\uparrow\infty$, the first two terms converge to $\int_\Q -u\del_t\varphi - \frac{\theta}{2}\nabla u\cdot \nabla \varphi$ by weak converge of $u_n$ to $u$. For the third and last term, we observe that \[\int_\Q (f_n u_n-fu)\varphi = \int_\Q (f_n-f)u_n\varphi + \int_\Q f(u_n-u)\varphi.\] 
It is clear that $f_n$ also converges to $f$ in $L^2(\X)$ and that $\|u_n\|_2\lle \|u_n\|_{H^{2+\beta, 1+\beta/2}}$ is bounded. We thus have that the first term on the right hand side of the last display goes to 0 by Cauchy-Schwartz and the inequality $\|u_n\varphi\|_2\lle \|u_n\|_2\|\varphi\|_\infty$. The second term also goes to 0 by Cauchy-Schwartz, this time by invoking the strong convergence of $u_n$ to $u$ in $H^{\beta,\beta/2}(\Q)\supset L^2(\Q)$. It follows that
\[\int_\Q -u\del_t\varphi -\frac{\theta}{2}\nabla u\cdot \nabla \varphi +fu\varphi=0,\] and so $u\in H^{2+\beta, 1+\beta/2}(\Q)$ is a weak solution to the original PDE \eqref{eq:PDETorus}. 

Finally, since $\beta>2+d/2$ there are many results in literature from which we can deduce that $u$ is also a strong solution to the PDE \eqref{eq:PDETorus}. For instance, by Sobolev embeddings, it is clear that since $f\in H^\beta(\X)$ and $u_0\in H^{1+\beta}(\X)$, there exists $\alpha\in (0,1)$ such that $f\in C^{2+\alpha}(\X)\subset C^{2+\alpha, 1+\alpha/2}(\overline\Q)\subset C^{\alpha, \alpha/2}(\overline \Q)$, and such that $u_0\in C^{3+\alpha}(\X)\subset C^{2+\alpha}(\X)$. It follows by Corollary 5.1.22 in \cite{Lunardi1995} that there exists a unique strong solution $u_{\theta,f}\in C^{2+\alpha, 1+\alpha/2}(\Q)$ to the PDE \eqref{eq:PDETorus}. It is thus certainly the unique weak solution, so we must have $u_{\theta,f}=u\in H^{2+\beta, 1+\beta/2}(\Q)$. The estimate \eqref{eq: estimateSolution} is then obtained similarly as in the proof of Lemma \ref{lemma: boundedSolution}.

\subsection{Auxiliary Results and their Proofs}

\begin{lemma}\label{lemma:firstEstimate}
	Let $\beta>2+d/2$. There exists $C, C'>0$ such that $\forall \theta\in\Theta, F\in H^\beta(\X), u\in C(\overline\Q)$,
	\begin{align}
		&\|\eL_{\theta, F}^{-1}u\|_\infty\leq T\|u\|_\infty.\label{eq: LipsEst1bis}\\
		&\|\eL_{\theta, F}^{-1} u\|_2\leq C\|u\|_2, \label{eq: LipsEst1}\\
		&\|\eL_{\theta,F}^{-1}u\|_{H^{2,1}}\leq C'(1+\|F\|_\infty)\|u\|_2,\label{eq: lipschitz2}
	\end{align}
\end{lemma}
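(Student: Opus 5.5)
The plan is to treat the three estimates separately, all for $w:=\eL_{\theta,F}^{-1}u\in H^{2,1}_{B,0}(\Q)$. This $w$ solves $\partial_t w-\frac{\theta}{2}\Delta_x w+\Phi(F)w=-u$ on the torus with $w(\cdot,0)=0$; note the sign convention $\eL_{\theta,F}=\frac{\theta}{2}\Delta_x-\del_t-\Phi(F)$. The key structural facts are that $\Phi(F)\ge f_{\min}>0$ and that $\theta\in[\theta_{\min},\theta_{\max}]$. So every constant will depend only on $\Theta$, $T$, $f_{\min}$ and $d$, except where $\|F\|_\infty$ enters through the zero-order term in the $H^{2,1}$ bound.

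For \eqref{eq: LipsEst1bis} I would use the Feynman--Kac representation, or equivalently the parabolic maximum principle. Let $B$ be a Brownian motion on $\T^d$ with generator $\frac{\theta}{2}\Delta$. Then $w(x,t)=-\EE_x\int_0^t e^{-\int_0^s\Phi(F)(B_r)\,dr}u(B_s,t-s)\,ds$. Since the exponential factor is at most $1$, this gives $|w(x,t)|\le t\|u\|_\infty\le T\|u\|_\infty$. As an alternative, I can compare $\pm w$ with the barrier $t\|u\|_\infty$. The barrier is a supersolution because $\partial_t(t\|u\|_\infty)+\Phi(F)t\|u\|_\infty\ge\|u\|_\infty$, and the maximum principle on the torus, which has no lateral boundary, then gives the bound. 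This requires $w$ to be regular enough for the maximum principle; I will handle that by approximating $F$ and $u$ with smooth data, as in the proof of Theorem~\ref{thm: existence}.

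For \eqref{eq: LipsEst1} I would use the energy method. Multiply the equation by $w$ and integrate over $\X$. Integrating by parts, with no boundary terms on the torus, gives $\frac12\frac{d}{dt}\|w(t)\|_2^2+\frac{\theta}{2}\|\nabla w(t)\|_2^2+\int\Phi(F)w^2=-\ip{u(t)}{w(t)}_2$. I drop the nonnegative gradient and absorption terms and apply Cauchy--Schwarz. Then $\frac{d}{dt}\|w\|_2\le\|u(t)\|_2$, so $\sup_t\|w(t)\|_2\le\int_0^T\|u(s)\|_2ds\le\sqrt T\|u\|_{L^2(\Q)}$, and hence $\|w\|_{L^2(\Q)}\le T\|u\|_2$. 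The constant is uniform in $(\theta,F)$.

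For \eqref{eq: lipschitz2}, the main step, I would rewrite the equation as a heat equation $L_\theta w=-u-\Phi(F)w$ with $L_\theta=\del_t-\frac{\theta}{2}\Delta_x$. I then use maximal $L^2$-regularity of $L_\theta:H^{2,1}_{B,0}(\Q)\to L^2(\Q)$ uniformly over $\theta\in\Theta$. On the torus this is explicit in Fourier series: the multiplier $(i\tau+\frac{\theta}{2}|k|^2)^{-1}$ obeys bounds of the form $|k|^2,|\tau|\lesssim|i\tau+\frac\theta2|k|^2|$ with constants depending only on $\theta_{\min}$. Alternatively, multiplying by $-\Delta w$ and by $\partial_t w$ gives the estimate by energy arguments. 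This yields $\|w\|_{H^{2,1}}\le C(\|u\|_2+\|\Phi(F)\|_\infty\|w\|_2)$. Combining with \eqref{eq: LipsEst1} and $\|\Phi(F)\|_\infty\lesssim1+\|F\|_\infty$, which holds because $\Phi'$ is bounded, gives $C'(1+\|F\|_\infty)\|u\|_2$.

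The main obstacle is making the $\theta$-uniformity of the constant in the isomorphism $L_\theta^{-1}$ rigorous. I would handle it by the direct energy computation: test the equation against $-\Delta w$ and against $\del_t w$, then use the zero initial data to drop the boundary terms. After that, only a density argument is needed to extend the estimate from smooth $u$ to $C(\overline\Q)$ or $L^2(\Q)$ data.
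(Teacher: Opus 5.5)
Your proposal is correct and follows the paper's overall structure: Feynman--Kac for the sup-norm bound, and, for the $H^{2,1}$ bound, the rewriting $L_\theta w=-u-\Phi(F)w$ combined with the isomorphism $L_\theta:H^{2,1}_{B,0}(\Q)\to L^2(\Q)$, the $L^2$ estimate, and $\|\Phi(F)\|_\infty\lesssim 1+\|F\|_\infty$. The differences are local. You prove the $L^2$ bound by a direct energy estimate on the inhomogeneous equation (constant $T$, needing only $\Phi(F)\ge 0$) instead of the paper's semigroup decay $\|e^{t\SSS_{\theta,F}}\|_{L^2\to L^2}\le e^{-tf_{\min}}$ plus Duhamel (constant $\sqrt{T/(2f_{\min})}$). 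You also get $\theta$-uniformity of $\|L_\theta^{-1}\|$ from an explicit energy or Fourier computation with constants depending on $\theta_{\min}$, which is more concrete than the paper's argument that $\theta\mapsto C_\theta$ is continuous on the compact set $\Theta$.
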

\begin{proof}
	Note that $v:=\eL_{\theta, F}^{-1}u$ is the solution to the PDE:
	\begin{equation*}
		\left\{\begin{aligned}
			\eL_{\theta,F}v &= u,\quad&& \text{on }\X\times (0,T),\\
			v(\cdot,0)& = 0, && \text{on }\X,
		\end{aligned}\right.
	\end{equation*}
	As we are on the torus, it has a representation in terms of the Feynman-Kac formula
	\[v(x,t) = \EE^x\left(\int_0^t u(x+\sqrt{\theta}B_s, t-s)e^{-\int_0^s \Phi\circ F(x+\sqrt{\theta}B_r)dr}ds\right),\]
	with $B$ a standard $d$-dimensional Brownian motion on the torus. Since $\Phi(F)>0$, the exponential factor in the integrand is strictly between 0 and 1. The proof of \eqref{eq: LipsEst1bis} follows since
	\[\sup_{(x,t)\in \Q} |v(x,t)|\leq \sup_{(x,t)\in\Q}\int_0^t |u(x,s)|ds\leq T\|u\|_\infty.\]
	
	For the proof of \eqref{eq: LipsEst1}, we recall the Schrödinger operator $\SSS_{\theta,F}:=\frac{\theta}{2}\Delta_x - \Phi(F)$. Consider $w_0\in L^2(\X)$, and let $w(t):= e^{t\SSS_{\theta,F}}w_0$; that is, $w$ is the solution to the PDE \[\del_t w = S_f w,\]
	with initial value $w(0)=w_0$ on the torus. Since we have \[\ip{\Delta_x w}{w}_{L^2(\X)} = -\ip{\nabla_x w}{\nabla_x w}_{L^2(\X)} = - \|\nabla_x w\|^2_{L^2(\X)},\] we derive the following inequality:
	\begin{align*}
		\ip{\SSS_{\theta,F}w}{w}_{L^2(\X)}&= \frac{\theta}{2}\ip{\Delta_x w}{w} - \ip{\Phi(F)w}{w}\\
		&=-\frac{\theta}{2}\|\nabla_x w\|_{L^2(\X)}^2 - \int_\X \Phi(F)(x)w(x,t)^2dx\\
		&\leq - \int_\X \Phi(F)(x)w(x,t)^2dx \text{ (since }\theta>0)\\
		&\leq -f_{\min} \|w(t)\|_{L^2(\X)}^2.
	\end{align*}
	 Let now 
	\[y(t):=\|w(t)\|_{L^2(\X)}^2 = \int_\X |w(x,t)|^2 dx.\]
	This implies in particular that $y'(t)=2\ip{\del_t w}{w}_{L^2(\X)}$. It follows that
	\begin{align*}
		y'(t)= 2\ip{\del_t w}{w} = 2\ip{\SSS_{\theta,F}w}{w}\leq -2 f_{\min} y(t).
	\end{align*}
	By Gronwall's inequality, the above display gives
	\begin{align}
		&y(t) \leq e^{-t2f_{\min}}y(0)\nonumber\\
		\iff &\|w(t)\|_{L^2(\X)} \leq e^{-tf_{\min}}\|w(0)\|_{L^2(\X)}\nonumber\\
		\implies &\|e^{t\SSS_{\theta,F}}\|_{L^2(\X)\to L^2(\X)}\leq e^{-tf_{\min}},\label{eq: opnormBound}
	\end{align}
	where for an operator $A:L^2(\X)\to L^2(\X), \ \|A\|_{L^2(\X)\to L^2(\X)}:=\sup_{\|h\|_{L^2(\X)}=1} \|Ah\|_{L^2(\X)}$.
	Let now $v:=\eL^{-1}_{\theta, F}u$ as before. Since $u\in C(\overline\Q)$, Theorem 5.1.11 in \cite{Lunardi1995} implies that $v$ has the following representation:
	\[v(x,t) = \int_0^t e^{(t-s)\SSS_{\theta,F}} u(\cdot, s)ds(x).\]
	By \eqref{eq: opnormBound}, it follows that 
	\begin{align*}
		\|v(\cdot, t)\|_{L^2(\X)} &\leq \int_0^t \|e^{(t-s)\SSS_{\theta,F}}\|_{L^2(\X)\to L^2(\X)} \|u(\cdot, s)\|_{L^2(\X)}ds\\
		&\leq \int_0^t e^{-f_{\min} (t-s)}\|u(\cdot, s)\|_{L^2(\X)}ds.
	\end{align*}
	Using Cauchy-Schwartz, we further obtain that
	\begin{align*}
		\|v\|_{L^2(\Q)}^2&= \int_0^T\|v(\cdot, t)\|_{L^2(\X)}^2dt\\
		&= \int_0^T \Bigl(\int_0^t e^{-2f_{\min}(t-s)}ds\Bigr)\Bigl(\int_0^t \|u(\cdot, s)\|_{L^2(\X)}^2ds\Bigr)dt\\
		&\leq \frac{1}{2f_{\min}}\int_0^T\int_0^t \|u(\cdot, s)\|_{L^2(\X)}^2dsdt\\
		&= \frac{1}{2f_{\min}} \int_0^t(T-s)\|u(\cdot,s)\|_{L^2(\X)}^2ds \text{ (by Fubini)}\\
		&\leq \frac{T}{2f_{\min}}\int_0^T\|u(\cdot, s)\|_{L^2(\X)}^2ds\\
		&\leq \frac{T}{2f_{\min}}\|u\|_{L^2(\Q)}^2.
	\end{align*}
	The proof of \eqref{eq: LipsEst1} is concluded by taking $C=\sqrt{T/(2f_{\min})}$.
	
	Finally, we know (Remark 15.1 in Chapter 4 of \cite{lionsVol2}) that $L_\theta:=\del_t-\dfrac{\theta}{2}\Delta_x$ is an isomorphism of $H^{2,1}_{B,0}(\Q)$ onto $L^2(\Q)$. It follows that there exists $C_\theta > 0$ such that
	\begin{align*}
		\|\eL_{\theta,F}^{-1}u\|_{H^{2,1}}&\leq C_\theta \|L_\theta (\eL_{\theta,F}^{-1}u)\|_2\\
		&= C_\theta \|-u + \Phi(F)\eL_{\theta,F}^{-1}u\|_2\\
		&\leq C_\theta(1+C\|\Phi(F)\|_\infty)\|u\|_2 \text{ (by \eqref{eq: LipsEst1})}\\
		&\lle C_\theta(1+\|F\|_\infty)\|u\|_2,
	\end{align*}
	where the last inequality follows from (6.1) of Lemma 29 in \cite{nicklVdG}. The proof is concluded if we can show that $\theta\mapsto C_\theta$ is a bounded map, which by assumed compactness of $\Theta$ certainly follows if it is continuous. Now $\|L_\theta v - L_\tau v\|_{2} = \|\frac{\theta-\tau}{2}\Delta_x v\|_2
	\lesssim |\theta-\tau| \|v\|_{H^{2, 1}}$, for any $v\in H_{B,0}^{2, 1}(\Q)$, and the result follows. 
\end{proof}

\begin{lemma}\label{lemma: bounds}
	Let $\beta>2+d/2$. There exists a constant $D$ such that for all $ \theta, \theta_0\in\Theta$ and $F, F_0, h\in H^\beta(\X)$, 
	\begin{enumerate}[label=(\roman*),leftmargin=*, itemsep=0.5ex, before={\everymath{\displaystyle}}]
		\item $ \|K_\theta F - K_\theta F_0\|_2 \leq D \|F-F_0\|_2,$
		\item $\|(I_{\theta,F} - I_{\theta,F_0})h\|_2\leq D\|F-F_0\|_2\|h\|_\infty$,
		\item $\|K_\theta F -K_{\theta_0}F\|_{H^{2,1}(\Q)}\leq D|\theta-\theta_0|(1+\|F\|_2)(1+\|F\|_\infty)$,
		\item $\|\dot K_\theta F - \dot K_\theta F_0\|_{H^{2,1}(\Q)}\leq D\|F-F_0\|_2(1+\|F\|_\infty),$
		\item $\|K_\theta F - K_{\theta_0}F-(\theta-\theta_0)\dot K_{\theta_0}F\|_{H^{2,1}(\Q)}\leq D|\theta-\theta_0|^2(1+\|F\|_2)(1+\|F\|_\infty)^2$,
		\item $\|K_{\theta}(F+h) - K_{\theta}F - I_{\theta,F}h\|_{H^{2,1}(\Q)}\leq D\|h\|_2\|h\|_\infty(1+\|F+h\|_\infty)$,
		\item $\|I_{\theta,F} h\|_{H^{2,1}}\le D \|h\|_2\|K_\theta F\|_\infty$.
	\end{enumerate}
\end{lemma}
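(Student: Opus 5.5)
The common strategy for all seven items is the same. Write the difference of interest as a function vanishing on the parabolic boundary. Apply a suitable operator $\eL_{\theta,F}$ to it so that the right-hand side is an explicit source term. Invert using Lemma~\ref{lemma:firstEstimate}: \eqref{eq: LipsEst1} for $L^2$ bounds and \eqref{eq: lipschitz2} for $H^{2,1}$ bounds. Throughout we use the uniform bound \eqref{eq: uniformBound}, $\|K_\theta F\|_\infty\le U$, the Lipschitz and second-order properties \eqref{eq: PhiCondition} of $\Phi$, and $\|uv\|_2\le\|u\|_\infty\|v\|_2$. The $H^{2,1}$ estimates cost the factor $(1+\|F\|_\infty)$ coming from \eqref{eq: lipschitz2}, with $\|\Phi(F)\|_\infty\lesssim 1+\|F\|_\infty$.

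For (i), note that $\eL_{\theta,F}(K_\theta F-K_\theta F_0)=(\Phi(F)-\Phi(F_0))K_\theta F_0$. Together with \eqref{eq: LipsEst1}, this gives $\|K_\theta F-K_\theta F_0\|_2\le CU\|\Phi'\|_\infty\|F-F_0\|_2$.

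For (vii), directly $\|I_{\theta,F}h\|_{H^{2,1}}\lesssim(1+\|F\|_\infty)\|h\Phi'(F)K_\theta F\|_2$. I would instead use the $H^{2,1}$ bound through $L_\theta$ together with the $L^2$ bound, to absorb the $\|F\|_\infty$ factor (or accept a constant depending on it, as the statement implicitly allows via $D$).

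For (ii), set $w=I_{\theta,F}h-I_{\theta,F_0}h$. Then
\[\eL_{\theta,F}w=h\Phi'(F)K_\theta F-h\Phi'(F_0)K_\theta F_0+(\Phi(F)-\Phi(F_0))I_{\theta,F_0}h.\]
Split the first difference as $h(\Phi'(F)-\Phi'(F_0))K_\theta F+h\Phi'(F_0)(K_\theta F-K_\theta F_0)$. Then bound everything in $L^2$ by $\|h\|_\infty\|F-F_0\|_2$ using (i). For the last term use $\|I_{\theta,F_0}h\|_\infty\le TU\|\Phi'\|_\infty\|h\|_\infty$, by \eqref{eq: LipsEst1bis}.

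For (iii), $\eL_{\theta_0,F}(K_\theta F-K_{\theta_0}F)=\tfrac{\theta_0-\theta}{2}\Delta_xK_\theta F$. Now apply \eqref{eq: lipschitz2}; this requires bounding $\|\Delta_xK_\theta F\|_2$. I expect this to be the main obstacle: we need an $H^{2,1}$ bound on $K_\theta F$ that is linear in $F$, not the polynomial bound of Theorem~\ref{thm: existence}. I would split $K_\theta F=v_0+w$, where $v_0=e^{t\theta\Delta/2}u_0$ solves the free heat equation, so that $w\in H^{2,1}_{B,0}$ solves $\eL_{\theta,F}w=\Phi(F)v_0$. Then \eqref{eq: lipschitz2} gives $\|w\|_{H^{2,1}}\lesssim(1+\|F\|_\infty)\|\Phi(F)\|_2\|v_0\|_\infty\lesssim(1+\|F\|_\infty)(1+\|F\|_2)$. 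Since $u_0\in H^{1+\beta}$, $v_0$ is uniformly bounded in $H^{2,1}$ over $\Theta$. This produces exactly the factor $(1+\|F\|_2)(1+\|F\|_\infty)$.

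For (v), write the difference $K_\theta F-K_{\theta_0}F-(\theta-\theta_0)\dot K_{\theta_0}F$. Applying $\eL_{\theta_0,F}$ gives $\tfrac{\theta_0-\theta}{2}\Delta_x(K_\theta F-K_{\theta_0}F)$. Then \eqref{eq: lipschitz2} and (iii) yield the quadratic bound with the extra factor $(1+\|F\|_\infty)$.

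For (iv), write $\dot K_\theta F-\dot K_\theta F_0$, apply $\eL_{\theta,F}$, and obtain
\[-\tfrac12\Delta_x(K_\theta F-K_\theta F_0)+(\Phi(F)-\Phi(F_0))\dot K_\theta F_0.\]
This needs $\|K_\theta F-K_\theta F_0\|_{H^{2,1}}\lesssim(1+\|F\|_\infty)\|F-F_0\|_2$, which is (i) upgraded via \eqref{eq: lipschitz2}. It also needs $\|\dot K_\theta F_0\|_\infty$ bounded, which follows from \eqref{eq: LipsEst1bis} if $\Delta_xK_\theta F_0$ is bounded (true since $F_0$ is fixed and smooth enough).

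For (vi), the computation is already in the proof of Lemma~\ref{lemma: devs}. Apply $\eL_{\theta,F+h}$ and get the source
\[(\Phi(F+h)-\Phi(F)-h\Phi'(F))K_\theta F+(\Phi(F+h)-\Phi(F))I_{\theta,F}h.\]
Its $L^2$ norm is $\lesssim\|h\|_\infty\|h\|_2$ by \eqref{eq: PhiCondition} and the $L^2$ bound on $I_{\theta,F}h$. Then \eqref{eq: lipschitz2} applied with $F+h$ contributes the factor $(1+\|F+h\|_\infty)$.
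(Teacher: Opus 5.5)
Your overall scheme (apply a suitable $\eL$ to each difference and invert with Lemma~\ref{lemma:firstEstimate}) is the paper's, and (i), (ii) and (vi) go through exactly as you describe. For (v) you take a different but valid route. You apply $\eL_{\theta_0,F}$ to get $\tfrac{\theta_0-\theta}{2}\Delta_x(K_\theta F-K_{\theta_0}F)$ and then invoke (iii). The paper instead applies $\eL_{\theta,F}$, obtains $-\tfrac{(\theta-\theta_0)^2}{2}\Delta_x\dot K_{\theta_0}F$, and bounds $\dot K_{\theta_0}F$ in $H^{2,1}$ with a second use of \eqref{eq: lipschitz2}. Your route gives exactly $D|\theta-\theta_0|^2(1+\|F\|_2)(1+\|F\|_\infty)^2$, but only if (iii) holds with the stated factor.

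That is where the gap is. In (iii) you invert $w=K_\theta F-v_0$ with $\eL_{\theta,F}^{-1}$, which gives $\|\Delta_xK_\theta F\|_2\lesssim(1+\|F\|_2)(1+\|F\|_\infty)$. The outer application of \eqref{eq: lipschitz2} to $K_\theta F-K_{\theta_0}F$ then adds a second factor $1+\|F\|_\infty$. So you have actually shown $|\theta-\theta_0|(1+\|F\|_2)(1+\|F\|_\infty)^2$, not the claim, and through your route (v) then picks up a cube.

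The same double counting happens in (iv). Upgrading (i) with $\eL_{\theta,F}^{-1}$ gives $\|K_\theta F-K_\theta F_0\|_{H^{2,1}}\lesssim(1+\|F\|_\infty)\|F-F_0\|_2$, and the outer inversion multiplies by another $1+\|F\|_\infty$. This is harmless for the paper's applications, where $\|F\|_{H^\beta}$ is bounded, but it is not the stated inequality.

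The missing idea is that the inner estimates must not be paid for with $\|F\|_\infty$.

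In (iii), invert $w$ with the free heat operator instead. Since $(\partial_t-\tfrac{\theta}{2}\Delta_x)w=-\Phi(F)K_\theta F$ with $w(\cdot,0)=0$, you get $\|w\|_{H^{2,1}}\lesssim\|\Phi(F)\|_{L^2(\Q)}\,U\lesssim 1+\|F\|_2$. Here you use the constant $C_\theta$ from the proof of \eqref{eq: lipschitz2}, which is bounded on $\Theta$. This works because the potential multiplies $K_\theta F$, which is bounded by $U$ by \eqref{eq: uniformBound}.

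The paper instead compares with the fixed truth, writing $\|\Delta_xK_{\theta_0}F\|_2\lesssim\|K_{\theta_0}F-K_{\theta_0}F_0\|_{H^{2,1}}+1$. In \eqref{eq: lipschitz_intermediate} it then uses $\eL_{\theta,F_0}(K_\theta F-K_\theta F_0)=(\Phi(F)-\Phi(F_0))K_\theta F$ and inverts $\eL_{\theta,F_0}$, so the sup-norm factor falls on $F_0$.

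That same identity is what you need in (iv). The resulting $1+\|F_0\|_\infty$ is harmless, since your constant already depends on $F_0$ through $\|\dot K_\theta F_0\|_\infty$.

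With the free-heat fix, your $v_0$ decomposition actually buys something the paper's argument does not: the constants in (iii) and (v) become free of $F_0$, as the lemma literally asserts. The paper's constants there depend on $F_0$.

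In (vii), your suggestion to absorb $\|F\|_\infty$ via $\partial_t-\tfrac\theta2\Delta_x$ together with the $L^2$ bound does not work. That is literally the proof of \eqref{eq: lipschitz2}. The term $\Phi(F)I_{\theta,F}h$ it produces can only be controlled by $\|\Phi(F)\|_\infty\|I_{\theta,F}h\|_2$ when only $\|h\|_2$ is available. So what is proved there is your fallback, a constant depending on $\|F\|_\infty$. That is also all the paper's one-line proof gives.
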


\begin{proof}

We begin by noticing that all functions in the left hand sides of the statements of the lemma vanish on the parabolic boundary of $\Q$. Thus they are all in the range of the isomorphisms $\eL_{\tau, G}^{-1}:   L^2(Q)\to H_{B,0}^{2,1}(\Q)$,
for the different possible values of $(\tau, G)\in\Theta\times H$. 

(i). Observe that,
\begin{align*}
	\eL_{\theta,F}(K_\theta F - K_{\theta}F_0) &= 0 - \eL_{\theta,F}K_{\theta}F_0\\
&= (\eL_{\theta,F_0} - \eL_{\theta,F})K_\theta F_0
	= (\Phi(F)-\Phi(F_0))K_{\theta}F_0.
\end{align*}
It follows by \eqref{eq: LipsEst1} that,
\begin{align*}
	\|K_\theta F- K_\theta F_0\|_2 & = \left\|\eL_{\theta,F}^{-1}\left[(\Phi(F_0)-\Phi(F))K_\theta F_0\right]\right\|_2\\
	&\leq C\|(\Phi(F)-\Phi(F_0))K_\theta F_0\|_2
	\leq C\|\Phi(F)-\Phi(F_0)\|_2\|K_{\theta}F_0\|_\infty.
\end{align*}
We conclude by applying \eqref{eq: PhiCondition} and \eqref{eq: uniformBound}. 

(ii). The definition of $I_{\theta,F}$ implies that
\begin{align*}
	\eL_{\theta,F}\left(I_{\theta, F}h - I_{\theta,F_0}h\right)& =h (\Phi'(F)K_\theta F - \Phi'(F_0)K_\theta F_0) + (\eL_{\theta,F_0} - \eL_{\theta,F})(I_{\theta,F_0}h)\\
	&=h(\Phi'(F) - \Phi'(F_0))K_\theta F + h\Phi'(F_0) (K_\theta F - K_\theta F_0)\\
	&\qquad+ (\Phi(F)-\Phi(F_0))I_{\theta, F_0}h.
\end{align*}
Using \eqref{eq: LipsEst1}, we thus obtain that,
\begin{align*}
	\|(I_{\theta,F} - I_{\theta,F_0})h\|_2 &\lle \|h\|_\infty \|K_\theta F\|_\infty\|\Phi'(F)-\Phi'(F_0)\|_2\\
	&\qquad+ \|h\|_\infty \|\Phi'(F_0)\|_\infty \|K_\theta F-K_\theta F_0\|_2\\
	&\qquad+ \|I_{\theta,F_0}h\|_\infty \|\Phi(F)-\Phi(F_0)\|_2.
\end{align*}
By \eqref{eq: uniformBound} and the fact that $\Phi'$ is Lipschitz, the first term is of order $\|h\|_\infty\|F-F_0\|_2$. The second term is of the same order by item (i) of the lemma. Besides by \eqref{eq: LipsEst1bis}, $\|I_{\theta,F_0}h\|_\infty \lle \|h\|_\infty$ and since $\Phi$ is Lipschitz, the third term is therefore also of order $\|h\|_\infty\|F-F_0\|_2$.

(iii). Application of $\eL_{\theta,F}$ to the left hand side yields,
\begin{align*}
	\eL_{\theta,F}(K_\theta F - K_{\theta_0}F)&= (\eL_{\theta_0,F}-\eL_{\theta,F})K_{\theta_0}F
	= \frac{\theta_0-\theta}{2}\Delta_x K_{\theta_0}F.
\end{align*}
Inequality \eqref{eq: lipschitz2} and the fact that $\|\Delta_xu\|_2\lle\|u\|_{H^{2,1}}$ for $u\in H^{2,1}(\Q)$ imply
\begin{align*}
	\|(K_{\theta}-K_{\theta_0})F\|_{H^{2,1}}&\lle |\theta-\theta_0|(1+\|F\|_\infty)\|\Delta_x K_{\theta_0}F\|_2\\
	&\leq |\theta-\theta_0| (1+\|F\|_\infty)\left(\|\Delta_x(K_{\theta_0}F - K_{\theta_0}F_0)\|_2 + \|\Delta_x K_{\theta_0}F_0\|_2\right)\\
	&\lle |\theta-\theta_0| (1+\|F\|_\infty)\left(\|K_{\theta_0}F - K_{\theta_0}F_0\|_{H^{2,1}} + 1 \right).
\end{align*}
The proof of this item is concluded by applying \eqref{eq: lipschitz2} and observing that, 
\begin{align}\label{eq: lipschitz_intermediate}
	\|K_\theta F- K_\theta F_0\|_{H^{2,1}}&=\|\eL_{\theta,F_0}^{-1}((\Phi(F)-\Phi(F_0))K_\theta F)\|_{H^{2,1}}\\
	&\leq C'(1+\|F_0\|_\infty)\|\Phi(F)-\Phi(F_0)\|_2\|K_\theta F\|_\infty \nonumber\\
	&\lle \|F-F_0\|_2\|K_\theta F\|_\infty 
	\leq U\|F-F_0\|_2 \lle 1 + \|F\|_2.\nonumber
\end{align}

(iv). The definition of $\dot K_\theta F$ implies that,
\begin{align*}
	\eL_{\theta,F}(\dot K_{\theta}F - \dot K_{\theta}F_0) &= -\frac{\theta}{2}(\Delta_x K_\theta F - \Delta_x K_\theta F_0) + (\eL_{\theta,F_0} - \eL_{\theta, F})(\dot K_\theta F_0)\\
	&=\frac{\theta}{2}\Delta_x(K_\theta F_0- K_\theta F) + (\Phi(F) - \Phi(F_0))\dot K_\theta F_0.
\end{align*}
By \eqref{eq: lipschitz2} we have,
\begin{align*}
	\|\dot K_\theta F - \dot K_\theta F_0\|_{H^{2,1}(\Q)}&\lesssim(1+\|F\|_\infty)\left(\theta \|\Delta_x(K_\theta F- K_\theta F_0)\|_2 +\|(\Phi(F) - \Phi(F_0))\dot K_\theta F_0\|_2\right)\\
	&\lle (1+\|F\|_\infty)\left(\|K_\theta F - K_\theta F_0\|_{H^{2,0}} + \|\Phi(F)-\Phi(F_0)\|_2\|\dot K_\theta F_0\|_\infty\right)\\
%	&\lle (1+\|F\|_\infty) \left(\|K_\theta F - K_\theta F_0\|_{H^{2,1}} +\|F-F_0\|_2\right)\\
	&\lle (1+\|F\|_\infty) \|F-F_0\|_2
\end{align*}
where in the last inequality we used \eqref{eq: lipschitz_intermediate} and
$\|\dot K_\theta F_0\|_\infty\lle  \|K_\theta F_0\|_{H^{2+\beta,1+\beta/2}}$. This last inequality is a consequence of \eqref{eq: LipsEst1bis} and the fact that since $\beta>2+d/2$, we have the embedding $H^{\beta,\beta/2}(\Q)\subset C^{1,1/2}(\Q)\subset L^\infty(\Q)$. Namely, we have
\begin{align*}
	\|\dot K_\theta F_0\|_\infty \leq \frac{T}{2}\|\Delta_x K_\theta F\|_\infty \lle \|\Delta_x K_\theta F_0\|_{H^{\beta,\beta/2}}\lle \|K_\theta F_0\|_{H^{2+\beta,1+\beta/2}}.
\end{align*}
Of course, to complete the proof we need to show that $\|K_\theta F\|_{H^{2+\beta,1+\beta/2}}$ is bounded above by a constant independent of $\theta$. This can be done by bounding it above by 
\begin{align*}
	&\|K_{\theta_0}F_0\|_{H^{2+\beta, 1+\beta/2}} + \|K_\theta F_0 - K_{\theta_0}F_0\|_{H^{2+\beta,1+\beta/2}}\\
	&= \|K_{\theta_0}F_0\|_{H^{2+\beta, 1+\beta/2}} + \frac{|\theta-\theta_0|}{2}\|\eL_{\theta,F_0}^{-1}[\Delta_x K_{\theta_0}F_0]\|_{H^{2+\beta,1+\beta/2}}\\
	&\lle 1 + \|\Delta_x K_{\theta_0}F_0\|_{H^{\beta,\beta/2}} \text{ (by \eqref{eq:refinedH} of Lemma \ref{lemma: refinedLipschitz} with $\eta=\beta$)}\\
	&\lle 1.
\end{align*}

(v). Observe that,
\begin{align*}
	&\eL_{\theta,F}\left(K_\theta F - K_{\theta_0} F -(\theta-\theta_0)\dot K_{\theta_0}F\right)\\
	&\quad= (\eL_{\theta_0,F}-\eL_{\theta,F})(K_{\theta_0} F) - (\theta -\theta_0)\eL_{\theta_0,F}(\dot K_{\theta_0}F)+(\theta-\theta_0)(\eL_{\theta_0,F}-\eL_{\theta,F})(\dot K_{\theta_0}F)\\
	&\quad= \frac{\theta_0-\theta}{2}\Delta_x K_{\theta_0}F - \frac{\theta_0-\theta}{2}\Delta_x K_{\theta_0}F - \frac{(\theta-\theta_0)^2}{2}\Delta_x \dot K_{\theta_0}F\\
	&\quad=  -\frac{(\theta-\theta_0)^2}{2}\Delta_x \dot K_{\theta_0}F.
\end{align*}
It follows by two applications of \eqref{eq: lipschitz2} that,
\begin{align*}
	\|(K_\theta -K_{\theta_0}-(\theta-\theta_0)\dot K_{\theta_0})F\|_{H^{2,1}}&\lle |\theta-\theta_0|^2(1+\|F\|_\infty)\|\Delta_x \dot K_{\theta_0}F\|_2\\
	&\lle|\theta-\theta_0|^2(1+\|F\|_\infty)\|\dot K_{\theta_0}F\|_{H^{2,1}}\\
	& = |\theta-\theta_0|^2 (1+\|F\|_\infty)\|\eL_{\theta_0,F}^{-1}(\Delta_x K_{\theta_0}F / 2)\|_{H^{2,1}}\\
	&\lle |\theta-\theta_0|^2 (1+\|F\|_\infty)^2(1+\|F\|_2),
	\end{align*}
where we also used $\|\Delta_xK_{\theta_0}F\|_2 \lle 1+\|F\|_2$, which was shown in the proof of item (iii).

(vi). The proof of this item is similar to the proof of the second part of Lemma~\ref{lemma: devs}. The only difference is that in the first step when bounding the $H^{2,1}$-norm of $K_{\theta}(F+h) - K_{\theta}F - I_{\theta,F}h$ instead of its $L^2$-norm, \eqref{eq: lipschitz2} is used instead of \eqref{eq: LipsEst1}, hence the added factor $(1+\|F\|_\infty)$.

(vii). By definition $I_{\theta, F}h=\eL_{\theta,F}^{-1}(h\Phi'(F)K_\theta F)$. Thus the result follows from
\eqref{eq: lipschitz2}.

\end{proof}

\begin{lemma}\label{lemma: refinedBounds}
	Let $\beta>2+d/2$. For all $\eta\in (0,1)$ and $R>0$, there exists a positive constant $D(\eta,R)$ such that for all $\theta\in\Theta$, $F\in H^\beta(\X)$ with
	$\|F\|_{H^\beta(\X)}\le R$ and  $h\in H^\beta(\X)$,
	\begin{enumerate}[label=(\roman*),leftmargin=*, itemsep=0.5ex, before={\everymath{\displaystyle}}]
		\item $\|\dot K_\theta F - \dot K_\theta F_0\|_{H^{2+\eta,1+\eta/2}(\Q)}\leq D(\eta,R)\|F-F_0\|_{H^\beta(\X)}$,
		\item $\|K_\theta F - K_{\theta_0}F-(\theta-\theta_0)\dot K_{\theta_0}F\|_{H^{2+\eta,1+\eta/2}(\Q)}\leq D(\eta,R)|\theta-\theta_0|^2$,
		\item $\|K_{\theta_0}(F+h) - K_{\theta_0}F - I_{\theta_0,F}h\|_{H^{2+\eta,1+\eta/2}(\Q)}\leq D(\eta,R)\|h\|_{H^\beta}^2(1+\|h\|_{H^\beta}+\|h\|_{H^\beta}^2),$
		\item $\|I_{\theta, F}h - I_{\theta,F_0}h\|_{H^{2+\eta,1+\eta/2}(\Q)}\leq D(\eta, R)\|F-F_0\|_{H^\beta(\X)}\|h\|_{H^{\beta}(\X)}.$
	\end{enumerate}
\end{lemma}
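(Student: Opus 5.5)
Each of the four quantities vanishes on the parabolic boundary, and each is the image under some $\eL_{\tau,G}^{-1}$ of an explicit right-hand side. I will therefore run the identities from the proof of Lemma~\ref{lemma: bounds} again, now in the parabolic Sobolev scale $H^{\eta,\eta/2}(\Q)$. The analytic input is the refined elliptic--parabolic estimate \eqref{eq:refinedH} of Lemma~\ref{lemma: refinedLipschitz}, stated later in the section:
\[
\|\eL_{\theta,F}^{-1}v\|_{H^{2+\eta,1+\eta/2}}\le C(\eta,R)\,\|v\|_{H^{\eta,\eta/2}},\qquad \|F\|_{H^\beta}\le R .
\]
I expect to prove it by writing $\eL_{\theta,F}=L_\theta+\Phi(F)$ and using the isomorphism $L_\theta:H_{B,0}^{2+\eta,1+\eta/2}\to H^{\eta,\eta/2}$ for $\eta<1$, where no compatibility conditions arise. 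The multiplication term is then absorbed with \eqref{EqSobolevProductTwo}, \eqref{EqInterpolation} and the $L^2$ bound \eqref{eq: LipsEst1}. The constant stays uniform in $\theta\in\Theta$ by the same continuity and compactness argument as at the end of Lemma~\ref{lemma:firstEstimate}.

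For each item I then bound the $H^{\eta,\eta/2}$ norm of the right-hand side, using products via \eqref{EqSobolevProductTwo} with a H\"older factor. Since $\beta>2+d/2$, Sobolev embedding gives $H^\beta(\X)\subset C^{2}(\X)$, hence $\Phi(F),\Phi'(F)\in C^{1}$ with norms controlled by $R$. Time-independent functions of $x$ lie in $C^{\eta,\eta/2}(\Q)$, so multiplication by them is bounded on $H^{\eta,\eta/2}$.

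Item (i): the right-hand side is $\frac{\theta}{2}\Delta_x(K_\theta F_0-K_\theta F)+(\Phi(F)-\Phi(F_0))\dot K_\theta F_0$.
\begin{itemize}
\item The first term is bounded by $\|K_\theta F-K_\theta F_0\|_{H^{2+\eta,1+\eta/2}}$.
\item That difference equals $\eL_{\theta,F}^{-1}[(\Phi(F_0)-\Phi(F))K_\theta F_0]$, so the refined estimate bounds it by $\|\Phi(F)-\Phi(F_0)\|_{C^{1}}\|K_\theta F_0\|_{H^{1,1/2}}\lesssim\|F-F_0\|_{H^\beta}$.
\item The second term is handled the same way, using $\dot K_\theta F_0\in H^{2+\beta,1+\beta/2}$ as shown in the proof of Lemma~\ref{lemma: bounds}(iv).
\end{itemize}

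Item (ii): the right-hand side is $-\tfrac{(\theta-\theta_0)^2}{2}\Delta_x\dot K_{\theta_0}F$. I need $\|\dot K_{\theta_0}F\|_{H^{2+\eta,1+\eta/2}}$ bounded. By the refined estimate this follows from $\|\Delta_x K_{\theta_0}F\|_{H^{\eta,\eta/2}}\lesssim\|K_{\theta_0}F\|_{H^{2+\beta,1+\beta/2}}$, which is bounded by Theorem~\ref{thm: existence} and (6.2) of \cite{nicklVdG}.

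Item (iv): repeat the three-term decomposition from Lemma~\ref{lemma: bounds}(ii). The factors $h$, $\Phi'(F)-\Phi'(F_0)$ and $\Phi(F)-\Phi(F_0)$ are measured in $C^{1}\lesssim H^\beta$. The other factors $K_\theta F$, $K_\theta F-K_\theta F_0$ (already controlled in item (i)) and $I_{\theta,F_0}h$ are measured in $H^{\eta,\eta/2}$. The last of these is bounded via the refined estimate by $\|h\Phi'(F_0)K_\theta F_0\|_{H^{\eta,\eta/2}}\lesssim\|h\|_{H^\beta}$.

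Item (iii): use the identity from the proof of Lemma~\ref{lemma: devs},
\[
\eL_{\theta_0,F+h}(\cdot)=\bigl(\Phi(F+h)-\Phi(F)-h\Phi'(F)\bigr)K_{\theta_0}F+\bigl(\Phi(F+h)-\Phi(F)\bigr)I_{\theta_0,F}h .
\]
The Taylor remainder has $C^1$ norm at most a multiple of $\|h\|_{C^1}^2(1+\|h\|_{C^1})$, because $\Phi$ has bounded derivatives: differentiate once and use the second-order bound on $\Phi'$. The second term is at most $\|h\|_{C^1}\|h\|_{H^\beta}$. The constant from the refined estimate depends on $\|F+h\|_{H^\beta}$; to stay uniform without restricting $h$, I would track the polynomial growth of that constant in $\|F+h\|_{H^\beta}$, as in the $(1+\|F\|_\infty)$ factor of \eqref{eq: lipschitz2}. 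This produces the extra $(1+\|h\|+\|h\|^2)$ factor.

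The main obstacle is the refined estimate itself, namely its uniformity in $\theta$ and $F$ and the fractional product bounds in $H^{\eta,\eta/2}$. The four items are then bookkeeping.
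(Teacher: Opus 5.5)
Your proposal is correct and is essentially the paper's proof: the paper also reruns the identities from Lemma~\ref{lemma: bounds} and Lemma~\ref{lemma: devs} through Lemma~\ref{lemma: refinedLipschitz}, using $C^\eta$--$H^{\eta,\eta/2}$ products and the embedding $H^\beta\subset C^1$, and it tracks the growth of the constant at $F+h$ in item (iii). Two small slips. First, for $\eta\in(0,1)$ the estimate you state and sketch is \eqref{eq:refinedC} combined with $\|\Phi(F)\|_{C^\eta}\lesssim 1+\|F\|_{H^\beta}$, not \eqref{eq:refinedH}, which requires $\eta>d/2$. Second, in (iii) you need the sharper bound $\|\Phi(F+h)-\Phi(F)-h\Phi'(F)\|_{C^1}\lesssim(1+R)\|h\|_{C^1}^2$, without your extra factor $(1+\|h\|_{C^1})$, to land exactly on the stated factor $(1+\|h\|_{H^\beta}+\|h\|_{H^\beta}^2)$.
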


\begin{proof}
	The four items in the lemma are specialisations of the following more general bounds. For $F\in H^\beta(\X)$,
	define
	$$C(F):=1+\|\Phi(F)\|_{C^\eta(\X)} + \|\Phi(F)\|_{C^\eta(\X)}\|F\|_\infty.$$
	Then for $\eta\in(0,1)$, there exists a postive constant $E_\eta$ such that for all $\theta\in\Theta$ and $F,h \in H$:
	\begin{enumerate}[label=(\alph*), itemsep = 10pt]
		\item $\|\dot K_\theta F -\dot K_\theta F_0\|_{H^{2+\eta, 1+\eta/2}(\Q)}
		\leq E_{\eta}C(F)^2\|\Phi(F)-\Phi(F_0)\|_{C^\eta(\X)}$,
		\item $\|K_\theta F-K_{\theta_0}F-(\theta-\theta_0)\dot K_{\theta_0}F\|_{H^{2+\eta,1+\eta/2}(\Q)}
		\leq E_{\eta}C(F)^4|\theta-\theta_0|^2$,
		\item $\|K_{\theta_0}(F+h)- K_{\theta_0}F - I_{\theta_0,F}h\|_{H^{2+\eta,1+\eta/2}(\Q)} \leq E_{\eta}C(F+h)\times \\ \\
		{}\qquad\qquad\qquad\times \Bigl[\|\Phi(F+h)-\Phi(F)\|_{C^\eta(\X)}\|I_{\theta_0,F}h\|_{H^{\eta,\eta/2}(\Q)} \\ \\
		{}\qquad\qquad\qquad\qquad     +\|\Phi(F+h)-\Phi(F)-h\Phi'(F)\|_{C^\eta(\X)}\|K_{\theta_0}F\|_{H^{\eta,\eta/2}(\Q)}.\Bigr]$.
		\item $\|I_{\theta_0, F}h - I_{\theta_0,F_0}h\|_{H^{2+\eta,1+\eta/2}(\Q)} \leq E_\eta C(F)\times\\ \\ {}\qquad \qquad \times \Bigl[ \|h\|_{C^\eta(\X)}\|\Phi'(F)\|_{C^\eta(\X)}\|K_\theta F - K_\theta F_0\|_{H^{\eta,\eta/2}} \\ \\ {}\qquad \qquad \qquad + \|h\|_{C^\eta(\X)}\|\Phi'(F)-\Phi'(F_0)\|_{C^\eta(\X)}\|K_{\theta_0} F_0\|_{H^{\eta,\eta/2}(\Q)}\\ \\ {}\qquad \qquad \qquad + \|\Phi(F)-\Phi(F_0)\|_{C^\eta(\X)}\|I_{\theta_0,F_0}h\|_{H^{\eta,\eta/2}(\Q)}\Bigr]$
	\end{enumerate}
	
	These bounds are themselves instances of application of the Lipschitz estimate in Lemma~\ref{lemma: refinedLipschitz} below.
	We proceed in three steps. First we derive  (i)--(iv) of Lemma~\ref{lemma: refinedBounds} from (a)--(b). Next we deduce (a)--(d) from 
	Lemma~\ref{lemma: refinedLipschitz}.
	
	Assume then that (a)--(d) are true and let $F\in H\cap B_{H^\beta}(R)$. We first note that $\|F\|_\infty+\|\nabla F\|_\infty\lle 
	\|F\|_{H^\beta(\X)} < R$, by Sobolev embedding, since $\beta-1>d/2$.  
	Furthermore since $\Phi(F)+\|\nabla \Phi(F)\|\lesssim 1+|F|+\|\nabla F\|$ under the assumptions on $\Phi$, 
	we have $\|\Phi(F)\|_{C^1(\X)}\lesssim 1+\|F\|_{C^1(\X)}$.
	Since $\eta < 1 $, it follows that $\|\Phi(F)\|_{C^\eta}\lle 1+\|F\|_{C^1(\X)}\lle 1+\|F\|_{H^\beta(\X)}$, and hence
	the constant $C(F)$ in (a)--(d) is bounded by $1+R^2$ for $F\in H\cap B_{H^\beta}(R)$.
	
	Item (ii) of Lemma~\ref{lemma: refinedBounds} then immediately follows from (b).
	
	Furthermore, keeping in mind that $\Phi, \Phi'$ and $\Phi''$ are all Lipschitz with bounded derivatives, we similarly have
	\begin{align*}
		\bigl|\Phi(F+G)-\Phi(F)\bigr|&\lesssim |G|,\\
		\bigl\|\nabla\Phi(F+G)-\nabla \Phi(F)\bigr\|&\lesssim\|\nabla G\|+|G|\,\|\nabla F\|,\\
		\bigl| \Phi'(F+G)-\Phi'(F)\bigr|&\lle |G|,\\
		\bigl\|\nabla\Phi'(F+G) - \nabla \Phi'(G)\|&\lle \|\nabla G\| + |G|\|\nabla F\|,\\
		\bigl\|\Phi(F+G)-\Phi(F)-G\Phi'(F)\bigr\|&\lesssim |G|^2,\\
		\bigl\|\nabla\Phi(F+G)-\nabla \Phi(F)-\nabla(G\Phi'(F))\bigr\|&\lesssim\|\nabla F\|\,|G|^2+|G|\,\|\nabla G\|.
	\end{align*}
	The first two inequalities readily give that 
	$\|\Phi(F+G)-\Phi(F)\|_{C^1(\X)}\lesssim (1+\|\nabla F\|_\infty)\|G\|_{C^1(\X)}$, which implies that
	$\|\Phi(F+G)-\Phi(F)\|_{C^\eta(\X)}\lesssim (1+R) \|G\|_{H^\beta(\X)}$, for 
	$\|\nabla F\|_\infty\lesssim \|\nabla F\|_{H^{\beta-1}(\X)}\le \|F\|_{H^\beta(\X)}\le R$, as $\beta-1>d/2$ by assumption. The third and fourth inequality work the same way to also imply $\|\Phi'(F+G) - \Phi'(F)\|_{C^\eta(\X)}\lle (1+R)\|G\|_{H^\beta(\X)}$.   
	The fifth and sixth inequalities similarly yield that
	$\|\Phi(F+G)-\Phi(F)-G\Phi'(F)\|_{C^\eta(\X)}\lesssim R \|G\|_{H^\beta(\X)}^2$, since
	$\|G^2\|_\infty\le \|G\|_{H^\beta(\X)}^2$ and $\||G|\nabla G\|_\infty\le
	\|G\|_\infty\|\nabla G\|_{\infty}\le \|G\|_{H^\beta(\X)}^2$, for $\beta-1>d/2$. 
	
	%observe that for any function $G\in H^{\beta}(\X)$,
	%\begin{align*}
	%	\|\Phi(F+G)-\Phi(F)\|_{H^\eta(\X)}&\lle \|\Phi(F+G)-\Phi(F)\|_{H^1(\X)}\\
	%	&= \|\Phi(F+G) - \Phi(F)\|_2 + \|\nabla (\Phi(F+G)-\Phi(F))\|_2\\
	%	&\lle \|G\|_2 + \|\Phi'(F+G)\nabla(F+G) - \Phi'(F)\nabla F\|_2\\
	%	&\leq \|G\|_2 + \|\Phi'(F+G)\nabla G\|_2 + \|(\Phi'(F+G)-\Phi'(F))\nabla F\|_2\\
	%	&\lle \|G\|_2 + \|\Phi'(F+G)\|_\infty \|\nabla G\|_2 + \|\Phi'(F+G)-\Phi'(F)\|_2\|\nabla F\|_\infty\\
	%	&\lle (1 + \|\nabla F\|_\infty)\|G\|_2 + (1+\|F+G\|_\infty)\|\nabla G\|_2.
	%\end{align*}
	%Since $\beta > 2 + d > 1 +d/2,$ we have in particular that $\beta-1>d/2$. This yields, \[\|\nabla F\|_\infty \lle \|\nabla F\|_{H^{\beta-1}(\X)}\lle \|F\|_{H^{\beta}(\X)}<M.\]
	%The previous two displays combined imply the following bound, where the inferred constant depends on $M$.
	%\[\|\Phi(F+G)-\Phi(F)\|_{H^\eta(\X)} \lle \|G\|_2 + \|\nabla G\|_2 = \|G\|_{H^1(\X)}.\]
	
	Item (i) of Lemma~\ref{lemma: refinedBounds} then follows from applying this bound with $G=F_0 - F$.
	
	Item (iii) follows from applying this with $G=h$, in combination with the two 
	following bounds, where the multiplicative constants are allowed to depend on $R$. 
	\begin{align*}
		\|I_{\theta_0,F}h\|_{H^{\eta,\eta/2}(\Q)} &= \|\eL_{\theta_0,F}^{-1}(h\Phi'(F)K_{\theta_0}F)\|_{H^{\eta,\eta/2}(\Q)}
		\lle \|\eL_{\theta_0, F}^{-1}(h\Phi'(F)K_{\theta_0}F)\|_{H^{2,1}(\Q)}\\
		&\leq C'(1+\|F\|_\infty)\|h\Phi'(F)K_{\theta_0}F\|_2 \qquad\text{ (by \eqref{eq: lipschitz2})}\\
		&\lle \|h\|_\infty\|K_{\theta_0} F\|_\infty\|\Phi'(F)\|_2\lle \|h\|_{H^{\beta}(\X)}.
	\end{align*} 
	\begin{align*}
		\|K_{\theta_0}F\|_{H^{\eta,\eta/2}(\Q)} &\leq \|K_{\theta_0}F - K_{\theta_0}F_0\|_{H^{\eta,\eta/2}(\Q)} + \|K_{\theta_0}F_0\|_{H^{\eta,\eta/2}(\Q)}\\
		&\lle 1+\|F\|_2 + \|K_{\theta_0}F_0\|_{H^{2+\beta,1+\beta/2}(\Q)} \qquad \text{ (by \eqref{eq: lipschitz_intermediate})}.
	\end{align*}
	The right side is bounded by a multiple of $R+1$, since $K_{\theta_0}F_0\in H^{2+\beta,1+\beta/2}(\Q)$ by assumption. We also use Lemma 29 in \cite{nicklVdG} to bound $C(F+h)$ by a constant multiple of
	 \begin{align*}
	 	&1+\|F+h\|_{C^{\eta}} +\|F+h\|_{\infty}(1+\|F+h\|_{\infty})\\
	 	&\lle 1+\|F\|_{H^\beta}+\|F\|_{H^\beta}^2+\|h\|_{H^\beta}(1+\|F\|_{H^\beta})+\|h\|_{H^\beta}^2\\
	 	&\leq 1+R+R^2+(1+R)\|h\|_{H^\beta}+\|h\|^2_{H^\beta},
	 \end{align*}
	and this explains the last factor on the right hand side of item (iii).
	
	Finally, item (iv) follows from applying the $\Phi$ related bounds with $G=F_0-F$ combined with the fact that for $\beta>1+d/2>\eta+d/2$, $\|h\|_{C^\eta(\X)}\lle\|h\|_{H^\beta(\X)}$, and with the fact that
	\begin{align*}
		\|K_{\theta_0}F-K_{\theta_0}F_0\|_{H^{\eta,\eta/2}(\Q)}\lle \|K_{\theta_0}F-K_{\theta_0}F_0\|_{H^{2,1}(\Q)} \lle \|F-F_0\|_2\lle \|F-F_0\|_{H^\beta(\X)},
	\end{align*}
	where we used \eqref{eq: lipschitz_intermediate} in the before last inequality.
	
	%\begin{align*}
	%	&\|\Phi(F+h)-\Phi(F) - h\Phi'(F)\|_{H^\eta(\X)}\\
	%	&\lle \|\Phi(F+h)-\Phi(F) - h\Phi'(F)\|_{H^1(\X)}\\
	%	&= \|\Phi(F+h)-\Phi(F) - h\Phi'(F)\|_2 + \|\nabla\left(\Phi(F+h)-\Phi(F) - h\Phi'(F)\right)\|_2\\
	%	&\leq\|h^2\|_2 + \|\Phi'(F+h)\nabla(F+h)-\Phi'(F)\nabla F - h\Phi''(F)\nabla F - \Phi'(F)\nabla h\|_2\\
	%	&\lle \|h\|_\infty\|h\|_2 + \|\left[\Phi'(F+h)-\Phi'(F)-h\Phi''(F)\right]\nabla F\|_2 + \|\left[\Phi'(F+h)-\Phi'(F)\right]\nabla h\|_2\\
	%	&\lle \|\nabla F\|_\infty \|h\|_\infty\|h\|_2 + \|h\|_\infty\|\nabla h\|_2\\
	%	&\lle \|h\|_\infty\|h\|_{H^1(\X)}\\
	%	&\lle \|h\|^2_{H^{\beta(\X)}}.
	%\end{align*}

Next we derive items (a)--(d) from Lemma~\ref{lemma: refinedLipschitz}. Note that we will exclusively be using \eqref{eq:refinedC}.

(a). The proof of item (iv) in Lemma~\ref{lemma: bounds} in combination with Lemma~\ref{lemma: refinedLipschitz} 
and \eqref{EqSobolevProductOne} yield the following upper bound (up to constants independent of $R$) on $\|\dot K_\theta F - \dot K_\theta F_0\|_{H^{2+\eta,1+\eta/2}(\Q)}$,
\begin{align*}
	&C(F) \left(\|\Delta_x(K_\theta F - K_\theta F_0)\|_{H^{\eta,\eta/2}} + \|\Phi(F)-\Phi(F_0)\|_{C^\eta}\|\dot K_\theta F_0\|_{H^{\eta,\eta/2}} \right).
\end{align*}
By  item (iii) of Lemma~\ref{lemma: bounds}, the map $\theta\mapsto K_\theta F_0$ is continuous with respect to 
the $H^{2,1}(\Q)$-norm and hence the map $\theta\mapsto \| K_\theta F_0\|_{H^{2,1}}$ is uniformly bounded over the compact set $\Theta$. Using \eqref{eq: lipschitz2}, we can thus bound $\|\dot K_\theta F_0\|_{H^{\eta,\eta/2}}$ independently of $R$ by 
\begin{align*}
	\|\dot K_\theta F_0\|_{H^{2,1}} =  \|\eL_{\theta,F_0}^{-1}(\Delta_x K_\theta F_0/2)\|_{H^{2,1}} \lle \|\Delta_x K_\theta F_0\|_2\lle \|K_\theta F_0\|_{H^{2,1}}<\infty.
\end{align*}
Since $\|\Delta_xu\|_{H^{\eta,\eta/2}}\lle \|u\|_{H^{2+\eta,1+\eta/2}}$, we can further upper bound 
the before last display (up to constants independent of $R$) by,
\begin{align*}
	&C(F)\left(\|K_\theta F - K_\theta F_0\|_{H^{2+\eta,1+\eta/2}} + \|\Phi(F)-\Phi(F_0)\|_{C^\eta}\right).
\end{align*}
Since $K_\theta F - K_\theta F_0= \eL_{\theta, F}^{-1}\bigl(\Phi(F)-\Phi(F_0)\bigr)K_\theta F_0\bigr)$,
another application of Lemma~\ref{lemma: refinedLipschitz} yields that 
\begin{align*}
	\|K_\theta F - K_\theta F_0\|_{H^{2+\eta,1+\eta/2}} 
	&\leq C(F)\|(\Phi(F)-\Phi(F_0))K_\theta F_0\|_{H^{\eta,\eta/2}}\\
	&\lle C(F)\|\Phi(F)-\Phi(F_0)\|_{C^\eta}\|K_\theta F_0\|_{H^{\eta,\eta/2}},
\end{align*}
by \eqref{EqSobolevProductTwo}. The proof of (a) is complete upon using that $\theta\mapsto \|K_\theta F_0\|_{H^{\eta,\eta/2}(\Q)}$ is bounded on $\Theta$, since the $H^{\eta,\eta/2}(\Q)$-norm is weaker than the $H^{2,1}(\Q)$-norm.

(b). Using the method of proof of item (v) in Lemma~\ref{lemma: bounds}, we have by Lemma~\ref{lemma: refinedLipschitz} that
\begin{align*}
	&\|K_\theta F-K_{\theta_0}F-(\theta-\theta_0)\dot K_{\theta_0}F\|_{H^{2+\eta,1+\eta/2}}\\
	&\qquad\leq C(F)\,|\theta-\theta_0|^2\|\Delta_x \dot K_{\theta_0}F\|_{H^{\eta,\eta/2}}\\
	&\qquad\lle C(F) |\theta-\theta_0|^2\left(\|\dot K_{\theta_0}F - \dot K_{\theta_0}F_0\|_{H^{2+\eta,1+\eta/2}} + \|\dot K_{\theta_0}F_0\|_{H^{2+\eta,1+\eta/2}}\right)\\
	&\qquad\lle C(F)|\theta-\theta_0|^2 \left(C(F)^2\|\Phi(F)-\Phi(F_0)\|_{C^\eta} + 1\right)\\
	&\qquad\lle C(F)^3|\theta-\theta_0|^2(1+\|\Phi(F)\|_{C^\eta}),
\end{align*}
where item (a) was used in the second last inequality.

(c). The proof of this item starts the same as the proof of the second part of Lemma~\ref{lemma: devs}, and next Lemma~\ref{lemma: refinedLipschitz} is applied instead of \eqref{eq: LipsEst1}.

(d). For this item we start exactly as in the proof of item (ii) in Lemma \ref{lemma: bounds} but apply Lemma \ref{lemma: refinedLipschitz} instead of \eqref{eq: LipsEst1}. We also use \eqref{EqSobolevProductTwo} and the fact that for two functions $a,b\in C^\eta(\X)$, we have $\|ab\|_{C^\eta}\lle \|a\|_{C^\eta}\|b\|_{C^\eta}$. 

\end{proof}

\begin{lemma}\label{lemma: refinedLipschitz}
	For $\eta\in(0,1)$, there exists a constant $C_1>0$ such that $\forall \theta\in \Theta, F\in C^\eta(\X)$ and  $u\in H^{\eta,\eta/2}(\Q)$ continuous
	\begin{align}\label{eq:refinedC}
		\|\eL_{\theta, F}^{-1}(u)\|_{H^{2+\eta, 1+\eta/2}(\Q)}\leq C_1 \bigl(1+\|\Phi(F)\|_{C^\eta(\X)} + \|\Phi(F)\|_{C^\eta(\X)}\|F\|_\infty\bigr)\|u\|_{H^{\eta,\eta/2}}.
	\end{align}
	More generally, for any $\eta>0$, let $M:=\lceil \eta/2 \rceil$. There exists a constant $C_2>0$ such that $\forall \theta\in \Theta, F\in C^\eta(\X)$ and $u\in H^{\eta,\eta/2}(\Q)$ continuous
	\begin{align}\label{eq:refinedCIterated}
		\|\eL_{\theta, F}^{-1}(u)\|_{H^{2+\eta, 1+\eta/2}(\Q)}\leq C_2\left(\sum_{k=0}^M\|\Phi(F)\|^k_{C^\eta} + \|\Phi(F)\|_{C^\eta}^M\|F\|_\infty\right)\|u\|_{H^{\eta,\eta/2}}.
	\end{align}
	Furthermore, for $d\leq 3$, if $\eta>d/2$, then there exists a constant $C_3>0$ such that $\forall \theta\in \Theta, F\in H^\eta(\X)$ and $u\in H^{\eta,\eta/2}(\Q)$ continuous
	\begin{align}\label{eq:refinedH}
		\|\eL_{\theta, F}^{-1}(u)\|_{H^{2+\eta, 1+\eta/2}(\Q)}\leq C_3\left(\sum_{k=0}^M\|\Phi(F)\|^k_{H^\eta} + \|\Phi(F)\|_{H^\eta}^M\|F\|_\infty\right)\|u\|_{H^{\eta,\eta/2}}.
	\end{align}
\end{lemma}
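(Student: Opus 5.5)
We reduce everything to the constant-coefficient heat operator $L_\theta=\del_t-\frac{\theta}{2}\Delta_x$ on the torus and treat the absorption term $\Phi(F)v$ as a right-hand side. Let $v=\eL_{\theta,F}^{-1}u$, so $v$ vanishes at $t=0$ and satisfies
\[
L_\theta v=-u+\Phi(F)v .
\]
The maximal-regularity estimate for the heat equation on $\T^d$ with zero initial data states that $L_\theta:H_{B,0}^{2+\eta,1+\eta/2}(\Q)\to H^{\eta,\eta/2}(\Q)$ is an isomorphism. This is \cite{lionsVol2}, Chapter~4, and on the torus it can also be checked directly with Fourier series in $x$: each mode solves the ODE $\hat v_k'+\frac{\theta}{2}|k|^2\hat v_k=\hat f_k$. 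The isomorphism constant depends continuously on $\theta$, so it is uniform over the compact set $\Theta$, by the same argument used at the end of the proof of Lemma~\ref{lemma:firstEstimate}. It follows that
\[
\|v\|_{H^{2+\eta,1+\eta/2}}\lesssim \|u\|_{H^{\eta,\eta/2}}+\|\Phi(F)v\|_{H^{\eta,\eta/2}}.
\]
By \eqref{EqSobolevProductTwo} the product term is at most $\|\Phi(F)\|_{C^\eta}\|v\|_{H^{\eta,\eta/2}}$; a function of $x$ alone has the same $C^{\eta,\eta/2}(\Q)$ and $C^\eta(\X)$ norms. We then only need to bound the low-order quantity $\|v\|_{H^{\eta,\eta/2}}$ for $\eta<1$. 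This is done by the embedding $H^{2,1}\subset H^{\eta,\eta/2}$ followed by \eqref{eq: lipschitz2}:
\[
\|v\|_{H^{\eta,\eta/2}}\lesssim\|v\|_{H^{2,1}}\lesssim(1+\|F\|_\infty)\|u\|_2\le(1+\|F\|_\infty)\|u\|_{H^{\eta,\eta/2}}.
\]
Combining the two displays gives
\[
\|v\|_{H^{2+\eta,1+\eta/2}}\lesssim\bigl(1+\|\Phi(F)\|_{C^\eta}+\|\Phi(F)\|_{C^\eta}\|F\|_\infty\bigr)\|u\|_{H^{\eta,\eta/2}},
\]
which is \eqref{eq:refinedC}. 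Continuity of $u$ is used only so that Lemma~\ref{lemma:firstEstimate} applies; alternatively one can argue by density.

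For \eqref{eq:refinedCIterated} we bootstrap the same identity in steps of $2$ in spatial regularity. Let $\eta_j:=\eta-2j$ for $j=0,\dots,M$, truncated at the bottom at some $\eta_M\in(0,2]$. Maximal regularity at level $\eta_j$ gives
\[
\|v\|_{H^{2+\eta_j,1+\eta_j/2}}\lesssim\|u\|_{H^{\eta_j,\eta_j/2}}+\|\Phi(F)\|_{C^{\eta}}\|v\|_{H^{\eta_j,\eta_j/2}},
\]
and the last norm is bounded by the $H^{2+\eta_{j+1},1+\eta_{j+1}/2}$-norm of $v$ at the next level down. Iterating $M$ times produces the factor $\sum_{k\le M}\|\Phi(F)\|_{C^\eta}^k$. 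At the bottom step we use \eqref{eq:refinedC}, or, if $\eta_M\ge1$, \eqref{eq: lipschitz2} together with interpolation; this contributes the final term $\|\Phi(F)\|^M_{C^\eta}\|F\|_\infty$. We will need the product inequality \eqref{EqSobolevProductTwo} with a time-independent multiplier in $C^\eta(\X)$ for every $\eta\ge0$. This is a standard Leibniz-type bound, since derivatives of $\Phi(F)$ produce no time derivatives.

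For \eqref{eq:refinedH} we run the identical iteration but replace \eqref{EqSobolevProductTwo} by the algebra property \eqref{EqSobolevProductOne}, which is valid because $\eta>d/2$. The multiplier $\Phi(F)$ is independent of $t$, so its $H^{\eta,\eta/2}(\Q)$-norm is comparable to $\sqrt T\,\|\Phi(F)\|_{H^\eta(\X)}$.

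The main obstacle is the first step. We need the sharp parabolic isomorphism in the fractional spaces $H^{2+\eta,1+\eta/2}$ with zero initial data and with no compatibility issues at $t=0$ when $\eta<1$. In addition, the constant must be uniform in $\theta$. We would get this from the Fourier-in-$x$ representation combined with interpolation between the integer cases $\eta=0$ and $\eta=2$. For the iterated version a second difficulty appears: once $\eta\ge1$, compatibility conditions at $t=0$ enter, and the bootstrapping must ensure that $u$ has the appropriate vanishing at $t=0$.
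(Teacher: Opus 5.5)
Your argument for \eqref{eq:refinedC} is the paper's argument: isomorphism of $L_\theta$ with a constant uniform over $\Theta$, the product bound \eqref{EqSobolevProductTwo}, then $H^{2,1}\subset H^{\eta,\eta/2}$ and \eqref{eq: lipschitz2}. Your bootstrap for \eqref{eq:refinedCIterated} is also the paper's iteration. The higher-order parts, however, have two genuine gaps.

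First, the iteration needs $L_\theta:H^{2+\eta_j,1+\eta_j/2}_{B,0}(\Q)\to H^{\eta_j,\eta_j/2}(\Q)$ to be an isomorphism at levels $\eta_j\ge 1$. You leave this open and suggest that $u$ must vanish at $t=0$ to some order. If that were required, you would only have proved the estimate for such $u$. The lemma, however, is stated for arbitrary continuous $u\in H^{\eta,\eta/2}(\Q)$. It is also applied to data such as $u=\Delta_xK_\theta F_0$ or $u=h\Phi'(F)K_\theta F$, whose values at $t=0$ are $\Delta u_0$ and $h\Phi'(F)u_0$, which do not vanish.

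The missing observation is that $\T^d$ has no lateral boundary, so there is no corner $\del\X\times\{0\}$ at which data must match. With zero initial data, the traces $\del_t^k v(\cdot,0)$ are determined by $u$ and automatically have the right regularity. The paper makes this rigorous by lifting the time traces at $t=0$ to a function $\psi$ (Theorem~4.2 in Chapter~1 of \cite{lionsVol1}). This $\psi$ is the compatibility function required in \eqref{eq:CR} with $u_0=0$, so Theorem~6.2 in Chapter~4 of \cite{lionsVol2} gives the isomorphism for every $\eta\ge0$ with no condition on $u$. Your Fourier-mode computation, carried through, would show the same thing.

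Second, for \eqref{eq:refinedH} you replace \eqref{EqSobolevProductTwo} by \eqref{EqSobolevProductOne} at every level, saying this is valid because $\eta>d/2$. But at level $\eta_j=\eta-2j$ the product $\Phi(F)v$ must be bounded in $H^{\eta_j,\eta_j/2}$. There \eqref{EqSobolevProductOne} needs $\eta_j>d/2$, which fails in general for $j\ge1$; for example, $d=3$ and $\eta=3$ give $\eta_1=1$. Tellingly, you never use the hypothesis $d\le 3$.

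The paper uses \eqref{EqSobolevProductOne} only at the top level. At the lower levels it uses \eqref{EqSobolevProductTwo} together with the embedding $H^\eta(\X)\subset C^{\eta-2}(\X)$, which holds because $2>d/2$, i.e.\ $d\le 3$. This gives $\|\Phi(F)\|_{C^{\eta_j}}\lesssim\|\Phi(F)\|_{H^\eta}$ for $j\ge 1$. Alternatively, the standard fact that $H^\eta(\X)$ with $\eta>d/2$ multiplies $H^s(\X)$ for all $0\le s\le\eta$ would also close the step, and would not need $d\le 3$. That is a different product estimate from the one you invoke.
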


\begin{proof}
For $\theta\in\Theta$, consider the operator $L_\theta u := \frac{\theta}{2}\Delta_x u - \partial_t u$. 
For $\eta\in (0,1)$, by Theorem 6.2 in Chapter~4 of \cite{lionsVol2} (applied with $m=1$, $B_0u=u$ and $r=\eta/2$, so
that the compatibility conditions reduce to only the second condition in \eqref{eq:CR}, the first being empty because for the torus $\del\X=\varnothing$, and the third being empty as $\eta/2-1/2<0$. The compatibility conditions are thus clearly satisfied (by $\psi=0$) for the homogeneous boundary functions $u_0=0$), the operator $L_\theta$ is an isomorphism
of $H_{B,0}^{2+\eta, 1+\eta/2}(\Q)$ onto $H^{\eta, \eta/2}(Q)$.
It follows that there exists $C_\theta>0$ such that,
\begin{align*}
	\|\eL_{\theta, F}^{-1}u\|_{H^{2+\eta, 1+\eta/2}}&\leq C_\theta\|L_\theta(\eL_{\theta, F}^{-1}u)\|_{H^{\eta,\eta/2}}\\
	&= C_\theta \|- u + \Phi(F)\eL_{\theta, F}^{-1}u\|_{H^{\eta,\eta/2}}\\
	&\lle C_{\theta}(\|u\|_{H^{\eta,\eta/2}} + \|\Phi(F)\|_{C^\eta}\|\eL_{\theta,F}^{-1}u\|_{H^{\eta,\eta/2}}) \text{ (by \eqref{EqSobolevProductTwo})}\\
	&\lle C_\theta (\|u\|_{H^{\eta,\eta/2}} + \|\Phi(F)\|_{C^\eta}\|\eL_{\theta,F}^{-1}u\|_{H^{2,1}})\text{ (since $\eta<2$)}\\
	&\lle C_\theta (\|u\|_{H^{\eta,\eta/2}} + \|\Phi(F)\|_{C^\eta}(1+\|F\|_\infty)\|u\|_2)\\
	&\lle C_\theta (1+\|\Phi(F)\|_{C^\eta} + \|\Phi(F)\|_{C^\eta}\|F\|_{\infty})\|u\|_{H^{\eta,\eta/2}},
\end{align*}
where we used \eqref{eq: lipschitz2} in the second last inequality. The lemma follows if the map $\theta\mapsto C_\theta$ is bounded,
which by the assumed compactness of $\Theta$ certainly follows if it is continuous.
Now, $\|L_\theta v - L_\tau v\|_{H^{\eta,\eta/2}} = \|\frac{\theta-\tau}{2}\Delta_x v\|_{H^{\eta,\eta/2}}
	\lesssim |\theta-\tau| \|v\|_{H^{2+\eta, 1+\eta/2}}$, for any $v\in H_{B,0}^{2+\eta, 1+\eta/2}(\Q)$,
and \eqref{eq:refinedC} follows.

For the case $\eta\geq 1$, we note that Theorem 6.2 in Chapter 4 of \cite{lionsVol2} can still be applied to show $L_\theta:H_{B,0}^{2+\eta, 1+\eta/2}(\Q)\to H^{\eta,\eta/2}(\Q)$ is an isomorphism provided we can find a function $\psi\in H^{3/2+\eta, 1+\eta/2}(\Q)$ satisfying  the last two conditions in \eqref{eq:CR} for $u_0=0$. This basically consists of extending the traces at time $t=0$ to a function of space and time. This is always possible by Theorem 4.2 in Chapter 1 of \cite{lionsVol1}. The proof of the inequality for the case $\eta\leq 2$ is then identical to the case $\eta\in (0,1)$. If $\eta>2$, then it suffices to repeat the argument for $M-1$ more iterations. For completeness, we show how this is done below. We start from the second line of the previous display to bound $\|\eL_{\theta,F}\|_{H^{\eta+2,\eta/2+1}}$ by a constant multiple of
\begin{align*}
	&\| -u + \Phi(F)\eL_{\theta, F}^{-1}u\|_{H^{\eta,\eta/2}}\\
	&\lle \|u\|_{H^{\eta,\eta/2}} + \|\Phi(F)\|_{C^\eta}\|\eL_{\theta,F}^{-1}u\|_{H^{\eta,\eta/2}} \text{ (by \eqref{EqSobolevProductTwo})}\\
	&\lle \|u\|_{H^{\eta,\eta/2}} + \|\Phi(F)\|_{C^\eta}(\|u\|_{H^{\eta-2,\eta/2-1}} + \|\Phi(F)\|_{C^{\eta-2}}\|\eL_{\theta,F}^{-1}u\|_{H^{\eta-2,\eta/2-1}})\\
	&\lle (1+\|\Phi(F)\|_{C^\eta})\|u\|_{H^{\eta,\eta/2}} + \|\Phi(F)\|^2_{C^\eta}\|\eL_{\theta,F}^{-1}u\|_{H^{\eta-2,\eta/2-1}}\\
	&\cdots \\
	&\lle \left(\sum_{k=0}^{M-1}\|\Phi(F)\|_{C^{\eta}}^k\right) +\|\Phi(F)\|_{C^\eta}^M\|\eL_{\theta,F}^{-1}u\|_{H^{\eta-2(M-1), \eta/2 - (M-1)}}\\
	&\lle \left(\sum_{k=0}^{M-1}\|\Phi(F)\|_{C^{\eta}}^k\right) +\|\Phi(F)\|_{C^\eta}^M\|\eL_{\theta,F}^{-1}u\|_{H^{2,1}} \text{ (since }-2\leq\eta-2M\leq 0).
\end{align*}
We then obtain \eqref{eq:refinedCIterated} by applying \eqref{eq: lipschitz2}.

Finally, in the case that $F\in H^\eta(\X)$ with $\eta>d/2$ and $d\leq 3$, we note that $\eta-2 < \eta-d/2$, and so $H^{\eta}(\X)\subset C^{\eta-2}(\X)$ by Sobolev embedding. We then obtain \eqref{eq:refinedH} exactly as we did \eqref{eq:refinedCIterated}, except that in the second line of the last display above we apply \eqref{EqSobolevProductOne} instead of \eqref{EqSobolevProductTwo}.
\end{proof}

\begin{lemma}\label{lemma: boundedSolution}
	Let $\beta>2+d/2$. Assume that $u_0\in H^{1+\beta}(\X)$, and that $f\in C^\infty(\X)$ is such that $f\geq f_{\min}>0$. Then the following boundary value problem
	\begin{equation*}
		\left\{\begin{aligned}
			\eL_{\theta,f}u &= 0,\quad&& \text{on }\X\times (0,T],\\
			u(\cdot,0)& = u_0, && \text{on }\X,
		\end{aligned}\right.
	\end{equation*}
	has a unique strong solution $u_{\theta,f}\in H^{2+\beta, 1+\beta/2}(\Q)$. Furthermore, there exists a constant $C>0$ such that:
	\begin{align}
		\|u_{\theta, f}\|_{H^{2+\beta, 1+\beta/2}(\Q)}\le C\Bigl(1+\|f\|_{H^{\beta}(\X)}^{1+\beta/2}\Bigr)
	\end{align}
\end{lemma}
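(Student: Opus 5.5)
Existence and uniqueness come from classical parabolic theory on the torus. The quantitative bound then comes from an energy estimate of the form
\[
\|u\|_{H^{2+s,1+s/2}}\lesssim \|u_0\|_{H^{1+s}}+\|fu\|_{H^{s,s/2}},
\]
bootstrapped in $s$. Since $f\in C^\infty(\X)$, $f\ge f_{\min}$ and $\partial\X=\varnothing$, the initial value problem is a linear parabolic Cauchy problem on the compact manifold $\T^d$ with smooth coefficients. No lateral compatibility conditions are needed, and the only requirement at $t=0$ is the trace regularity $u_0\in H^{1+\beta}(\X)$, matching the space $H^{2+\beta,1+\beta/2}$. Theorem~6.2 in Chapter~4 of \cite{lionsVol2} therefore gives a unique solution $u\in H^{2+\beta,1+\beta/2}(\Q)$. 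The initial-time compatibility conditions can be removed, as in the proof of Lemma~\ref{lemma: refinedLipschitz}, by subtracting an extension $\psi$ of the time traces obtained from Theorem~4.2 in Chapter~1 of \cite{lionsVol1}. Uniqueness also follows from \eqref{eq: LipsEst1}: the difference of two solutions solves the homogeneous problem with zero data.

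For the estimate I will write $L_\theta u=\partial_t u-\frac{\theta}{2}\Delta_x u=-fu$ and use that $L_\theta$ is an isomorphism from $H^{2+s,1+s/2}$, with initial trace $H^{1+s}$, onto $H^{s,s/2}$. The constants are uniform in $\theta\in\Theta$ by the continuity argument used in Lemma~\ref{lemma:firstEstimate}. This gives, for each $s\in[0,\beta]$,
\[
\|u\|_{H^{2+s,1+s/2}}\lesssim \|u_0\|_{H^{1+s}}+\|fu\|_{H^{s,s/2}}.
\]
The base case $s=0$ uses $\|u\|_\infty\le\|u_0\|_\infty$ (Feynman--Kac, as in \eqref{eq: uniformBound}) and $\|f\|_\infty\lesssim\|f\|_{H^\beta}$, which give $\|u\|_{H^{2,1}}\lesssim 1+\|f\|_{H^\beta}$. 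Inductively I raise $s$ by steps of $2$. Since $\beta>d/2$, the product bound \eqref{EqSobolevProductOne} (or a tame Moser-type version) gives
\[
\|fu\|_{H^{s,s/2}}\lesssim \|f\|_{H^s}\|u\|_\infty+\|f\|_\infty\|u\|_{H^{s,s/2}}.
\]
The term $\|u\|_{H^{s,s/2}}$ is controlled by the previous step, at level $s-2$ but with two more derivatives. Each step therefore multiplies the bound by at most $1+\|f\|_{H^\beta}$. After about $\lceil\beta/2\rceil$ steps, together with one final step from $s=\beta-2$ to $s=\beta$, this accumulates a power of $\|f\|_{H^\beta}$.

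The main obstacle is obtaining exactly the exponent $1+\beta/2$ rather than a cruder power. If counting integer steps gives $1+\lceil\beta/2\rceil$, I would refine it by interpolation \eqref{EqInterpolation}: at the intermediate level I bound $\|u\|_{H^{s,s/2}}$ by $\|u\|_{2}^{\nu}\|u\|_{H^{2+s,1+s/2}}^{1-\nu}$, absorb the top-order term into the left-hand side with Young's inequality, and track the resulting power of $\|f\|_{H^\beta}$. A simpler route is to note that only \emph{some} polynomial bound is used later, since every application is on bounded balls $B_{H^\beta}(R)$. If the exact exponent resists, I would prove the lemma with the exponent produced by the iteration and check that this suffices downstream.
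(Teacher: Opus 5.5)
Your proposal is correct, but its main line differs from the paper's. The paper makes a single pass at the top level $s=\beta$. With $\psi$ an extension of $u_0$ that is independent of $\theta$ and $f$, it gets $\|u\|_{H^{2+\beta,1+\beta/2}}\lesssim 1+\|fu\|_{H^{\beta,\beta/2}}\lesssim 1+\|f\|_{H^\beta}\|u\|_{H^{\beta,\beta/2}}$ from the plain, non-tame algebra bound \eqref{EqSobolevProductOne}. It then interpolates $\|u\|_{H^{\beta,\beta/2}}\lesssim \|u\|_2^{2/(2+\beta)}\|u\|_{H^{2+\beta,1+\beta/2}}^{\beta/(2+\beta)}$ and uses $\|u\|_2\lesssim\|u_0\|_\infty$. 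Finally it absorbs the top-order norm by dividing through, which is equivalent to your Young step. This gives exactly the exponent $1+\beta/2$ for every $\beta>2+d/2$.

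So the refinement you keep in reserve, applied once at $s=\beta$, is the entire proof, and the bootstrap is unnecessary. The iteration in steps of $2$ costs more. It needs a tame Moser-type product estimate in the anisotropic spaces; this is valid here because $f$ is time-independent, but it is not among the inequalities the paper lists. It also needs the $L_\theta$-isomorphism at every intermediate level. On its own it yields the exponent $1+\lceil\beta/2\rceil$, which equals $1+\beta/2$ only when $\beta$ is an even integer.

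Otherwise you need interpolation anyway. For example, at the bottom level $s^*=\beta-2\lfloor\beta/2\rfloor\in(0,2)$, the bound $\|u\|_{H^{s^*,s^*/2}}\lesssim\|u\|_2^{1-s^*/2}\|u\|_{H^{2,1}}^{s^*/2}$ gives $(1+\|f\|_{H^\beta})^{1+s^*/2}$. The remaining $(\beta-s^*)/2$ steps then give exactly $1+\beta/2$. What the bootstrap buys is bounds at all intermediate regularity levels, which the lemma does not need. Your fallback of changing the exponent would not prove the lemma as stated, but you do not need it.

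Your existence and uniqueness argument matches the paper's: classical Lions--Magenes theory on the torus, with time-zero compatibility handled by a trace extension. The paper cites Theorem~5.3 rather than~6.2 of Chapter~4.
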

\begin{proof}
	Since $f$ is smooth, Theorem 5.3 in Chapter 4 of \cite{lionsVol2} directly implies the existence of a unique strong solution $u_{\theta,f}\in H^{2+\beta, 1+\beta/2}(\Q)$ to the PDE in the statement of the lemma (the existence of a compatibility function  satisfying \eqref{eq:CR} being guaranteed on the torus by the trace extension Theorem (Theorem 4.2 in Chapter 1 of \cite{lionsVol1}) as we have already seen in the previous lemma). As was seen in the proof of Lemma \ref{lemma: refinedLipschitz}, when $\X$ is the torus $\T^d$, the operator $L_\theta:H_{B,0}^{2+\beta, 1+\beta/2}(\Q)\to H^{\beta,\beta/2}(\Q)$ defined as $L_\theta = \frac{\theta}{2}\Delta_x -\del_t $ is an isomorphism. Furthermore, by Theorem 4.2 in Chapter 1 of \cite{lionsVol1}, one can also infer the existence of $\psi\in H^{2+\beta, 1+\beta/2}(\Q)$ (completely independent of $\theta$ and $f$) equal to $u_0$ at time $t=0$. It follows that $u_{\theta,f} - \psi\in H_{B,0}^{2+\beta,1+\beta/2}(\Q)$ and thus that
	\begin{align*}
		\|u_{\theta,f}\|_{H^{2+\beta,1+\beta/2}}&\leq \|u_{\theta,f}-\psi\|_{H^{2+\beta, 1+\beta/2}} + \|\psi\|_{H^{2+\beta, 1+\beta/2}}\\
		&\lle \|L_\theta (u_{\theta,f} -\psi)\|_{H^{\beta,\beta/2}} + \|\psi\|_{H^{2+\beta, 1+\beta/2}}\\
		&\leq \|L_\theta u_{\theta, f}\|_{H^{\beta, \beta/2}} +\|L_\theta \psi\|_{H^{\beta,\beta/2}} + \|\psi\|_{H^{2+\beta, 1+\beta/2}}\\
		&=\|fu_{\theta,f}\|_{H^{\beta,\beta/2}} + \Bigl( \|L_\theta\psi\|_{H^\beta,\beta/2} + \|\psi\|_{H^{2+\beta, 1+\beta/2}} \Bigr)\\
		&\lle \|f\|_{H^\beta}\|u_{\theta,f}\|_{H^{\beta,\beta/2}} + 1,
	\end{align*}
	where in the last inequality we used \eqref{EqSobolevProductOne} for bounding the first term, and Proposition 2.3 in Chapter 4 of \cite{lionsVol2} (applied to $\Delta_x$ and $\del_t$) for bounding the second term by $(\theta\vee 2)\|\psi\|_{H^{2+\beta, 1+\beta/2}}$, which can be bounded uniformly in $\theta$ since $\Theta$ is compact. By the interpolation inequality \eqref{EqInterpolation} applied with $\nu=2/(2+\beta)$, we further obtain the bound 
	\[\|u_{\theta,f}\|_{H^{2+\beta,1+\beta/2}} \lle 1 +\|f\|_{H^\beta} \|u_{\theta,f}\|_2^{2/(2+\beta)}\|u_{\theta,f}\|_{H^{2+\beta,1+\beta/2}}^{\beta/(2+\beta)}.\]
	By \eqref{eq: uniformBound}, we know that since $f$ is positive, $\|u_{\theta,f}\|_2\lle \|u_{\theta,f}\|_\infty\leq \|u_0\|_\infty$, and so we get
	\[\|u_{\theta,f}\|_{H^{2+\beta,1+\beta/2}} \lle 1 +\|f\|_{H^\beta}\|u_{\theta,f}\|_{H^{2+\beta,1+\beta/2}}^{\beta/(2+\beta)}.\]
	If $\|u_{\theta,f}\|_{H^{2+\beta, 1+\beta/2}}\leq 1$, then we can trivially bound the right hand side by $1+\|f\|_{H^\beta}^{1+\beta/2}$. In the case that $\|u_{\theta,f}\|_{H^{2+\beta, 1+\beta/2}}> 1$, we divide both sides of the inequality by $\|u\|_{H^{2+\beta, 1+\beta/2}}^{\beta/(2+\beta)}$ and conclude by subsequently exponentiating both sides to the power $1+\beta/2$.
	
\end{proof}

%%%%%%%%%%W%%%%%%%%%%%%%%%%%%%%%%%%%%%%%%%%%%%%
%% Single Appendix:                         %%
%%%%%%%%%%%%%%%%%%%%%%%%%%%%%%%%%%%%%%%%%%%%%%
%\begin{appendix}
%\section*{???}%% if no title is needed, leave empty \section*{}.
%\end{appendix}
%%%%%%%%%%%%%%%%%%%%%%%%%%%%%%%%%%%%%%%%%%%%%%
%% Multiple Appendixes:                     %%
%%%%%%%%%%%%%%%%%%%%%%%%%%%%%%%%%%%%%%%%%%%%%%
%\begin{appendix}
%\section{???}
%
%\section{???}
%
%\end{appendix}

%%%%%%%%%%%%%%%%%%%%%%%%%%%%%%%%%%%%%%%%%%%%%%
%% Support information, if any,             %%
%% should be provided in the                %%
%% Acknowledgements section.                %%
%%%%%%%%%%%%%%%%%%%%%%%%%%%%%%%%%%%%%%%%%%%%%%
\begin{acks}[Acknowledgments]
 The authors would like to thank Richard Nickl for insightful discussions. 
\end{acks}
\bibliographystyle{imsart-number} % Style BST file (imsart-number.bst or imsart-nameyear.bst)
\bibliography{bibliography}       % Bibliography file (usually '*.bib')

@incollection {AadStFlour,
    AUTHOR = {van der Vaart, Aad},
     TITLE = {Semiparametric statistics},
 BOOKTITLE = {Lectures on probability theory and statistics
              ({S}aint-{F}lour, 1999)},
    SERIES = {Lecture Notes in Math.},
    VOLUME = {1781},
     PAGES = {331--457},
 PUBLISHER = {Springer, Berlin},
      YEAR = {2002},
      ISBN = {3-540-43736-3},
   MRCLASS = {62Gxx},
  MRNUMBER = {1915446},
MRREVIEWER = {J.\ A.\ Melamed},
}

@article {vdV1991,
    AUTHOR = {van der Vaart, Aad},
     TITLE = {On differentiable functionals},
   JOURNAL = {Ann. Statist.},
  FJOURNAL = {The Annals of Statistics},
    VOLUME = {19},
      YEAR = {1991},
    NUMBER = {1},
     PAGES = {178--204},
      ISSN = {0090-5364,2168-8966},
   MRCLASS = {62G20 (62G05)},
  MRNUMBER = {1091845},
       DOI = {10.1214/aos/1176347976},
       URL = {https://doi.org/10.1214/aos/1176347976},
}

@book {Lunardi1995,
    AUTHOR = {Lunardi, Alessandra},
     TITLE = {Analytic semigroups and optimal regularity in parabolic
              problems},
    SERIES = {Modern Birkh\"auser Classics},
      NOTE = {[2013 reprint of the 1995 original] [MR1329547]},
 PUBLISHER = {Birkh\"auser/Springer Basel AG, Basel},
      YEAR = {1995},
     PAGES = {xviii+424},
      ISBN = {978-3-0348-0556-8; 978-3-0348-0557-5},
   MRCLASS = {47D06 (01A75 34G20 35Kxx 46M35 46N20 47N20 58D25)},
  MRNUMBER = {3012216},
}

@book{krylov,
	title={Lectures on Elliptic and Parabolic Equations in Sobolev Spaces},
	author={Krylov, N.V.},
	isbn={9780821846841},
	lccn={2008016051},
	series={Graduate Studies in Mathematics, Graduate Studies in Mathema},
	url={https://books.google.nl/books?id=8RzqBwAAQBAJ},
	year={2008},
	publisher={American Mathematical Society}
}

@book{lionsVol1,
	title={Non-homogeneous boundary value problems and applications: Vol. 1},
	author={Lions, Jacques Louis and Magenes, Enrico},
	volume={181},
	year={2012},
	publisher={Springer Science \& Business Media}
}

@book{lionsVol2,
	title={Non-homogeneous boundary value problems and applications: Vol. 2},
	author={Lions, Jacques Louis and Magenes, Enrico},
	volume={181},
	year={2012},
	publisher={Springer Science \& Business Media}
}

@article{kekkonen,
	title = {Consistency of Bayesian inference with Gaussian process priors for a parabolic inverse problem},
	author = {Hanne Kekkonen},
	year = {2022},
	doi = {10.1088/1361-6420/ac4839},
	language = {English},
	volume = {38},
	journal = {Inverse Problems},
	issn = {0266-5611},
	publisher = {IOP Publishing},
	number = {3},
}

@article{giordano,
	title={Consistency of Bayesian inference with Gaussian process priors in an elliptic inverse problem},
	author={Giordano, Matteo and Nickl, Richard},
	journal={Inverse problems},
	volume={36},
	number={8},
	pages={085001},
	year={2020},
	publisher={IOP Publishing}
}

@article{nicklVdG,
	title={Convergence rates for penalized least squares estimators in PDE constrained regression problems},
	author={Nickl, Richard and van de Geer, Sara and Wang, Sven},
	journal={SIAM/ASA Journal on Uncertainty Quantification},
	volume={8},
	number={1},
	pages={374--413},
	year={2020},
	publisher={SIAM}
}

@book{GhosalVdV,
	author = {Ghosal, Subhashis and Vaart, Aad},
	year = {2017},
	month = {06},
	pages = {1-646},
	title = {Fundamentals of Nonparametric Bayesian Inference},
	isbn = {9780521878265},
	journal = {Fundamentals of Nonparametric Bayesian Inference},
	doi = {10.1017/9781139029834}
}

@article {vdVvZ2008,
    AUTHOR = {van der Vaart, A. W. and van Zanten, J. H.},
     TITLE = {Rates of contraction of posterior distributions based on
              {G}aussian process priors},
   JOURNAL = {Ann. Statist.},
  FJOURNAL = {The Annals of Statistics},
    VOLUME = {36},
      YEAR = {2008},
    NUMBER = {3},
     PAGES = {1435--1463},
      ISSN = {0090-5364,2168-8966},
   MRCLASS = {62G05 (60G15)},
  MRNUMBER = {2418663},
MRREVIEWER = {Theofanis\ Sapatinas},
       DOI = {10.1214/009053607000000613},
       URL = {https://doi.org/10.1214/009053607000000613},
}

@article {vdVvZ2007,
    AUTHOR = {van der Vaart, Aad and van Zanten, Harry},
     TITLE = {Bayesian inference with rescaled {G}aussian process priors},
   JOURNAL = {Electron. J. Stat.},
  FJOURNAL = {Electronic Journal of Statistics},
    VOLUME = {1},
      YEAR = {2007},
     PAGES = {433--448},
      ISSN = {1935-7524},
   MRCLASS = {62G05 (60G15 62C10)},
  MRNUMBER = {2357712},
MRREVIEWER = {J.\ A.\ Melamed},
       DOI = {10.1214/07-EJS098},
       URL = {https://doi.org/10.1214/07-EJS098},
}

@book{vaartghosal_errata,
  title={Errata to Fundamentals of nonparametric Bayesian inference},
  author={Ghosal, Subhashis and Van der Vaart, Aad},
  year={2017},
  publisher={https://diamhomes.ewi.tudelft.nl/$\sim$avandervaart/books/bayes/errata.pdf}
}

@article {vaartghosal2007,
    AUTHOR = {Ghosal, Subhashis and van der Vaart, Aad},
     TITLE = {Convergence rates of posterior distributions for non-i.i.d.
              observations},
   JOURNAL = {Ann. Statist.},
  FJOURNAL = {The Annals of Statistics},
    VOLUME = {35},
      YEAR = {2007},
    NUMBER = {1},
     PAGES = {192--223},
      ISSN = {0090-5364,2168-8966},
   MRCLASS = {62G20 (62G08)},
  MRNUMBER = {2332274},
MRREVIEWER = {Stergios\ B.\ Fotopoulos},
       DOI = {10.1214/009053606000001172},
       URL = {https://doi.org/10.1214/009053606000001172},
}

@book {AadsBoek,
	AUTHOR = {van der Vaart, A. W.},
	TITLE = {Asymptotic statistics},
	SERIES = {Cambridge Series in Statistical and Probabilistic Mathematics},
	VOLUME = {3},
	PUBLISHER = {Cambridge University Press, Cambridge},
	YEAR = {1998},
	PAGES = {xvi+443},
	ISBN = {0-521-49603-9; 0-521-78450-6},
	MRCLASS = {62-02 (62E20 62F05 62F12 62G07 62G09 62G20)},
	MRNUMBER = {1652247},
	MRREVIEWER = {Nancy Reid},
	DOI = {10.1017/CBO9780511802256},
	URL = {https://doi.org/10.1017/CBO9780511802256},
}

@article{castillo,
  title={A semiparametric Bernstein--von Mises theorem for Gaussian process priors},
  author={Castillo, Isma{\"e}l},
  journal={Probability Theory and Related Fields},
  volume={152},
  pages={53--99},
  year={2012},
  publisher={Springer}
}

@article{monard2021,
  title={Consistent Inversion of Noisy Non-Abelian X-Ray Transforms},
  author={Monard, Fran{\c{c}}ois and Nickl, Richard and Paternain, Gabriel P},
  journal={Communications on Pure and Applied Mathematics},
  volume={74},
  number={5},
  pages={1045--1099},
  year={2021},
  publisher={Wiley Online Library}
}

@book{Nickl23,
	author = {R. Nickl},
year={2023},
title={Bayesian Non-linear Inverse Problems},
series={Z\"urich Lectures in Advanced Mathematics},
publisher={ETH, Z\"urich},
}

@article {Dudley73,
    AUTHOR = {Dudley, R. M.},
     TITLE = {Sample functions of the {G}aussian process},
   JOURNAL = {Ann. Probability},
  FJOURNAL = {The Annals of Probability},
    VOLUME = {1},
      YEAR = {1973},
    NUMBER = {1},
     PAGES = {66--103},
      ISSN = {0091-1798},
   MRCLASS = {60G15 (60G17)},
  MRNUMBER = {346884},
       DOI = {10.1214/aop/1176997026},
       URL = {https://doi.org/10.1214/aop/1176997026},
}

@book {vdV_wellner2023,
    AUTHOR = {van der Vaart, A. W. and Wellner, Jon A.},
     TITLE = {Weak convergence and empirical processes---with applications
              to statistics},
    SERIES = {Springer Series in Statistics},
   EDITION = {Second},
 PUBLISHER = {Springer, Cham},
      YEAR = {2023},
     PAGES = {xvii+679},
      ISBN = {978-3-031-29038-1; 978-3-031-29040-4},
   MRCLASS = {60-02 (60B12 60F05 62G30)},
  MRNUMBER = {4628026},
       DOI = {10.1007/978-3-031-29040-4},
       URL = {https://doi.org/10.1007/978-3-031-29040-4},
}

@article{magra,
author = {Adel Magra and Aad van der Vaart and Harry van Zanten},
title = {{Semi-parametric Bernstein-von Mises theorem in linear inverse problems}},
volume = {19},
journal = {Electronic Journal of Statistics},
number = {1},
publisher = {Institute of Mathematical Statistics and Bernoulli Society},
pages = {1855 -- 1888},
year = {2025},
doi = {10.1214/25-EJS2372},
URL = {https://doi.org/10.1214/25-EJS2372}
}

@article {RayvdV,
    AUTHOR = {Ray, Kolyan and van der Vaart, Aad},
     TITLE = {Semiparametric {B}ayesian causal inference},
   JOURNAL = {Ann. Statist.},
  FJOURNAL = {The Annals of Statistics},
    VOLUME = {48},
      YEAR = {2020},
    NUMBER = {5},
     PAGES = {2999--3020},
      ISSN = {0090-5364,2168-8966},
   MRCLASS = {62D20 (62D10 62G08 62G15 62G20)},
  MRNUMBER = {4152632},
       DOI = {10.1214/19-AOS1919},
       URL = {https://doi.org/10.1214/19-AOS1919},
}

@article {GiordanoKekkonen,
    AUTHOR = {Giordano, Matteo and Kekkonen, Hanne},
     TITLE = {Bernstein--von {M}ises theorems and uncertainty quantification
              for linear inverse problems},
   JOURNAL = {SIAM/ASA J. Uncertain. Quantif.},
  FJOURNAL = {SIAM/ASA Journal on Uncertainty Quantification},
    VOLUME = {8},
      YEAR = {2020},
    NUMBER = {1},
     PAGES = {342--373},
      ISSN = {2166-2525},
   MRCLASS = {62G20 (62F15 65N21)},
  MRNUMBER = {4069334},
MRREVIEWER = {Guang-Hui\ Zheng},
       DOI = {10.1137/18M1226269},
       URL = {https://doi.org/10.1137/18M1226269},
}

@article {Knapiketal2011,
    AUTHOR = {Knapik, B. T. and van der Vaart, A. W. and van Zanten, J. H.},
     TITLE = {Bayesian inverse problems with {G}aussian priors},
   JOURNAL = {Ann. Statist.},
  FJOURNAL = {The Annals of Statistics},
    VOLUME = {39},
      YEAR = {2011},
    NUMBER = {5},
     PAGES = {2626--2657},
      ISSN = {0090-5364,2168-8966},
   MRCLASS = {62G05 (62F15 62G20)},
  MRNUMBER = {2906881},
MRREVIEWER = {Kaushik\ Ghosh},
       DOI = {10.1214/11-AOS920},
       URL = {https://doi.org/10.1214/11-AOS920},
}

@article{Nickl2018,
	author = {R. Nickl},
	title = {Bernstein-von Mises Theorems for statistical inverse problems I: Schr\"odinger Equation},
	year = {2018},
	journal = {Journal of the European Mathematical Society},
	volume = {22},
	number = {8},
	pages = {2697-2750},
	DOI = {https://doi.org/10.4171/JEMS/975},
}

@article {ismaeljudith,
    AUTHOR = {Castillo, Isma\"{e}l and Rousseau, Judith},
     TITLE = {A {B}ernstein--von {M}ises theorem for smooth functionals in
              semiparametric models},
   JOURNAL = {Ann. Statist.},
  FJOURNAL = {The Annals of Statistics},
    VOLUME = {43},
      YEAR = {2015},
    NUMBER = {6},
     PAGES = {2353--2383},
      ISSN = {0090-5364},
   MRCLASS = {62G20 (62F15 62M05)},
  MRNUMBER = {3405597},
MRREVIEWER = {Shibin Zhang},
       DOI = {10.1214/15-AOS1336},
       URL = {https://doi.org/10.1214/15-AOS1336},
}

@article {MonardNicklPternain2021,
    AUTHOR = {Monard, Fran\c cois and Nickl, Richard and Paternain, Gabriel
              P.},
     TITLE = {Statistical guarantees for {B}ayesian uncertainty
              quantification in nonlinear inverse problems with {G}aussian
              process priors},
   JOURNAL = {Ann. Statist.},
  FJOURNAL = {The Annals of Statistics},
    VOLUME = {49},
      YEAR = {2021},
    NUMBER = {6},
     PAGES = {3255--3298},
      ISSN = {0090-5364,2168-8966},
   MRCLASS = {62F15 (65N21)},
  MRNUMBER = {4352530},
       DOI = {10.1214/21-aos2082},
       URL = {https://doi.org/10.1214/21-aos2082},
}

@article {Knapiketal2013,
    AUTHOR = {Knapik, B. T. and van der Vaart, A. W. and van Zanten, J. H.},
     TITLE = {Bayesian recovery of the initial condition for the heat
              equation},
   JOURNAL = {Comm. Statist. Theory Methods},
  FJOURNAL = {Communications in Statistics. Theory and Methods},
    VOLUME = {42},
      YEAR = {2013},
    NUMBER = {7},
     PAGES = {1294--1313},
      ISSN = {0361-0926},
   MRCLASS = {62G05 (35K15 35R50 62C20 62F15 62G15 62G20)},
  MRNUMBER = {3031282},
MRREVIEWER = {Kaushik Ghosh},
       DOI = {10.1080/03610926.2012.681417},
       URL = {https://doi.org/10.1080/03610926.2012.681417},
}

@article{Ray2013,
author = {Kolyan Ray},
title = {{Bayesian inverse problems with non-conjugate priors}},
volume = {7},
journal = {Electronic Journal of Statistics},
number = {none},
publisher = {Institute of Mathematical Statistics and Bernoulli Society},
pages = {2516 -- 2549},
year = {2013},
doi = {10.1214/13-EJS851},
URL = {https://doi.org/10.1214/13-EJS851}
}

@article{Yan2020,
author = {Shota Gugushvili and Aad van der Vaart and Dong Yan},
title = {{Bayesian linear inverse problems in regularity scales}},
volume = {56},
journal = {Annales de l'Institut Henri Poincaré, Probabilités et Statistiques},
number = {3},
publisher = {Institut Henri Poincaré},
pages = {2081 -- 2107},
year = {2020},
doi = {10.1214/19-AIHP1029},
URL = {https://doi.org/10.1214/19-AIHP1029}
}

@article{SzabovdV2024,
author = {Geerten Koers and Botond Szabó and Aad van der Vaart},
title = {Linear methods for nonlinear inverse problems},
year = {2024},
journal = {preprint},
}

@article{brezis1983,
  title={Analyse fonctionnelle},
  author={Brezis, Ha{\"\i}m},
  journal={Th{\'e}orie et applications},
  year={1983},
  publisher={masson}
}

@misc{nickl2025,
      title={Bernstein-von Mises theorems for time evolution equations}, 
      author={Richard Nickl},
      year={2025},
      eprint={2407.14781},
      archivePrefix={arXiv},
      primaryClass={math.ST},
      url={https://arxiv.org/abs/2407.14781}, 
}

@book{EvansPDE,
  author    = {Evans, Lawrence C.},
  title     = {Partial Differential Equations},
  series    = {Graduate Studies in Mathematics},
  volume    = {19},
  edition   = {2},
  publisher = {American Mathematical Society},
  year      = {2010},
}

%% or include bibliography directly:
% \begin{thebibliography}{}
% \bibitem{b1}
% \end{thebibliography}

\end{document}